\newtheorem{theorem}{Theorem}
\newtheorem{lemma}[theorem]{Lemma}
\newtheorem{corollary}[theorem]{Corollary}
\newtheorem{proposition}[theorem]{Proposition}
\theoremstyle{definition}
\newtheorem{example}[theorem]{Example}
\newtheorem{definition}[theorem]{Definition}
\newcommand{\End}{\mathrm{End}}
\newcommand{\path}{\mathrm{Path}}
\newcommand{\clpath}{\mathrm{ClPath}}
\newcommand{\Z}{\mathbb{Z}}
\newcommand{\N}{\mathbb{N}}
\newcommand{\M}{\mathbb{M}}
\begin{document}

\title{Commutator Leavitt Path Algebras}
\author{Zachary Mesyan}

\maketitle

\begin{abstract}  For any field $K$ and directed graph $E$, we completely describe the elements of the Leavitt path algebra $L_K(E)$ which lie in the  commutator subspace $[L_K(E), L_K(E)]$. We then use this result to classify all Leavitt path algebras $L_K(E)$ that satisfy $L_K(E) = [L_K(E),L_K(E)]$. We also show that these Leavitt path algebras have the additional (unusual) property that all their Lie ideals are (ring-theoretic) ideals, and construct examples of such rings with various ideal structures.

\medskip

\noindent
\emph{Keywords:}  Leavitt path algebra, commutator, perfect Lie algebra 

\noindent
\emph{2010 MSC numbers:} 16W10, 16D70, 17B60.
\end{abstract}

\section{Introduction}

In 2005 Abrams and Aranda Pino~\cite{AP} introduced the \emph{Leavitt path algebras}, which generalize a class of algebras universal with respect to an isomorphism property between finite-rank free modules, studied by Leavitt~\cite{Leavitt}, and provide algebraic analogues of graph $C^*$-algebras (see~\cite{Raeburn}). Subsequently, Leavitt path algebras have become a lively research subject.

Let $K$ be a field, $E$ a directed graph, and $L_K(E)$ the associated Leavitt path algebra (described in detail in Definition~\ref{LPAdefinition}). In this paper we build on work began in~\cite{AF} and~\cite{AM}, investigating the structure of the Lie algebra, or commutator subspace, $[L_K(E), L_K(E)]$ (consisting all $K$-linear combinations of elements of the form $xy-yx$, for $x,y\in L_K(E)$) arising from $L_K(E)$. More specifically, in those two papers it is determined when $[L_K(E), L_K(E)]$ is simple as a Lie algebra, for certain simple Leavitt path algebras $L_K(E)$. Along the way, a necessary and sufficient condition is given in~\cite{AM} for an arbitrary linear combination of vertices of the graph $E$ to lie in $[L_K(E), L_K(E)]$.

Here we extend the above result, to produce a complete classification of the elements of $L_K(E)$ that lie in $[L_K(E), L_K(E)]$ (Theorem~\ref{All-Comm}), for arbitrary $K$ and $E$. Since any full $d \times d$ matrix ring $\M_d(K)$ over a field $K$ can be expressed as a Leavitt path algebra (as discussed in the next section), our result can also be viewed as a generalization of the classical theorem of Albert/Muckenhoupt~\cite{AABM} and Shoda~\cite{Shoda} showing that a matrix in $\M_d(K)$ is a sum of commutators if and only if it has trace zero. (For a generalization in a different direction see~\cite{ZM}.)

In Theorem~\ref{PerfectLPAs} we use the above classification to completely describe the Leavitt path algebras that satisfy $L_K(E) = [L_K(E), L_K(E)]$. We also give many examples, having different ideal structures, of Leavitt path algebras with this property. Rings $R$ satisfying $R=[R,R]$ (the additive subgroup of $R$ generated by the commutators), or \emph{commutator rings}, are rather rare beasts--the few previously known examples are discussed in~\cite{ZM}, and the aforementioned Leavitt path algebras significantly expand this class. The study of commutator rings dates back to at least 1956, when Kaplansky~\cite{Kaplansky} asked, along with eleven other ring-theoretic questions, whether there is a division ring in which every element is a sum of commutators. (This was answered in the affirmative two years later by Harris~\cite{Harris}.) Lie algebras $L$ (generally, not arising from associative rings) with the property $L=[L,L]$ have also been studied in the literature (e.g., see~\cite{Jacobson}, \cite{BZ}, and~\cite{Marshall}), and are known as \emph{perfect Lie algebras}.

It turns out that all the commutator Leavitt path algebras $L_K(E)$ have the further property that a subset $U \subseteq L_K(E)$ is an ideal if and only if it is a Lie ideal (i.e., a $K$-subspace $U$ of $L_K(E)$ satisfying $[U, L_K(E)] \subseteq U$), and in this case $[U, L_K(E)] = U$ (Theorem~\ref{ideal_iff_Lie_ideal}). The author is not aware of any other (necessarily nonunital) rings with this property in the literature. That the ideals and Lie ideals coincide in commutator Leavitt path algebras is particularly interesting, since in general it seems to be difficult to describe the Lie ideal structure of a ring. While this subject has been studied extensively (e.g., see~\cite{BHK}, \cite{Erickson}, \cite{Herstein1}, \cite{Herstein2}, and~\cite{LM}), a complete classification of the Lie ideals seems to be available only for simple rings.

In the final section of this paper we note that commutator Leavitt path algebras actually belong to a broader class of commutator rings which arise as direct limits. It is hard, however, to say much about this more general class. For more on Leavitt path algebras as direct limits see~\cite{Goodearl}.

\subsection*{Acknowledgement} 

The author is grateful to Gene Abrams for very helpful discussions about this subject.

\section{Definitions and Notation}\label{NotationSect}

We begin by recalling some notions from the Lie theory of associative rings, defining the Leavitt path algebra of a graph, and establishing notation that will be used throughout the paper.

Let $R$ be an associative (but not necessarily unital) ring. Given two elements $x,y \in R$, we let $[x, y]$ denote the commutator $xy - yx$, and let $[R, R]$ denote the additive subgroup of $R$ generated by the commutators. Then both $R$ and $[R,R]$ become (non-associative) Lie rings, with operation $x \ast y = [x,y]$. If $R$ is in addition an algebra over a field $K$, then $[R,R]$ is a $K$-subspace of $R$ (since $k[x,y] = [kx,y]$ for all $k \in K$ and $x,y \in R$), and so $R$ and $[R,R]$ are also Lie $K$-algebras. If $R = [R, R]$, then $R$ is said to be \emph{commutator ring}, or to be \emph{perfect} as a Lie ring. A subset $U$ of a Lie ring (respectively, Lie $K$-algebra) $L$ is called a \emph{Lie ideal} if $U$ is an additive subgroup (respectively, $K$-subspace) of $L$ and $[L, U] \subseteq U$ (that is, $[l,u] \in U$ for all $l \in L$ and $u \in U$). Also, given a set $I$, we let $R^{(I)}$ denote the direct sum of copies of $R$ indexed by $I$. The sets of all integers, natural numbers, and positive integers will be denoted by $\Z$, $\N$, and $\Z_+$, respectively.

\subsection{Graphs}

A \emph{directed graph} $E=(E^0,E^1,r,s)$ consists of two  sets $E^0,E^1$ (the elements of which are called \emph{vertices} and \emph{edges}, respectively), together with functions $s,r:E^1 \to E^0$, called \emph{source} and \emph{range}, respectively. The word \emph{graph} will always mean \emph{directed graph}. A \emph{path} $p$ in  $E$ is a finite sequence of (not necessarily distinct) edges $p=e_1\dots e_n$ such that $r(e_i)=s(e_{i+1})$ for $i=1,\dots,n-1$. Here we define $s(p):=s(e_1)$ to be the \emph{source} of $p$, $r(p):=r(e_n)$ to be the \emph{range} of $p$, and $|p| : = n$ to be the \emph{length} of $p$. We view the elements of $E^0$ as paths of length $0$, and denote by $\path(E)$ the set of all paths in $E$. A path $p = e_1\dots e_n \in \path(E)$ ($e_1, \dots, e_n \in E^1$) is said to be \emph{closed} if $s(p)=r(p)$. Such a path is said to be a \emph{cycle} if in addition $s(e_i)\neq s(e_j)$ for every $i\neq j$. A cycle consisting of just one edge is called a \emph{loop}. We denote the set of all closed paths in $\path(E)$ by $\clpath(E)$. A graph which contains no cycles is called \emph{acyclic}.

Given two vertices $u, v \in E^0$ such that there is a path $p \in \path(E)$ satisfying $s(p) = u$ and $r(p) = v$, we let $d(u,v)$ be the length of a shortest such path $p$. (In particular, we have $d(u,u) = 0$.) If no such path $p$ exists, we set $d(u,v) = \infty$.  Also, for each $u \in E^0$ and $m \in \N$ let $D(u, m) = \{v \in E^0 \mid d(u,v) \leq m\}$. A subset $H \subseteq E^0$ is said to be \emph{hereditary} if $v \in H$ whenever $u \in H$ and $d(u, v) < \infty$. Also, $H$ is \emph{saturated} if $v \in H$ whenever $0 < |s^{-1}(v)| < \infty$ and $\{r(e) \mid e \in E^1 \text{ and } s(e) = v\} \subseteq H$.

A vertex $v \in E^0$ for which the set $\{e\in E^1 \mid s(e)=v\}$ is finite is said to have \emph{finite out-degree}. The graph $E$ is said to have \emph{finite out-degree}, or to be \emph{row-finite}, if every vertex of $E$ has finite out-degree. A graph $E$ where both $E^0$ and $E^1$ are finite sets is called a \emph{finite} graph. A vertex $v \in E^0$ which is the source vertex of no edges of $E$ is called a \emph{sink}, while a vertex having finite out-degree which is not a sink is called a \emph{regular} vertex.

\begin{definition} \label{number-def}
Let $E$ be a graph, and for each $v \in E^0$ let $\epsilon_v \in \Z^{(E^0)}$ denote the element with $1$ in the $v$-th coordinate and zeros elsewhere. If $v \in E^0$ is a regular vertex, for all $u \in E^0$ let $a_{vu}$ denote the number of edges $e \in E^1$ such that $s(e) = v$ and $r(e) = u$. In this situation, define $$B_v = (a_{vu})_{u \in E^0} - \epsilon_v \in \Z^{(E^0)}.$$   On the other hand, let $$B_v = (0)_{u \in E^0} \in \Z^{(E^0)},$$ if $v$ is not a regular vertex.  \hfill $\Box$
\end{definition}

\subsection{Path Algebras}

\begin{definition}\label{LPAdefinition}  
Let $K$ be a field, and let $E$ be a  graph. The \emph{Cohn path $K$-algebra} $C_K(E)$ \emph{of $E$ with coefficients in $K$} is the $K$-algebra generated by the set $\{v\mid v\in E^0\} \cup \{e,e^*\mid e\in E^1\}$, which satisfies the following relations:

(V)  $vw = \delta_{v,w}v$ for all $v,w\in E^0$ \  (i.e., $\{v\mid v\in E^0\}$ is a set of orthogonal idempotents),

(E1) $s(e)e=er(e)=e$ for all $e\in E^1$,

(E2) $r(e)e^*=e^*s(e)=e^*$ for all $e\in E^1$,

(CK1) $e^*f=\delta _{e,f}r(e)$ for all $e,f\in E^1$.\\
Let $N$ denote the ideal of $C_K(E)$ generated by elements of the form $v~-~\sum_{\{e\in E^1 \mid s(e)=v\}} ee^*,$ where $v \in E^0$ is a regular vertex. Then a $K$-algebra of the form $C_K(E)/N$ is called a \emph{Leavitt path $K$-algebra} $L_K(E)$ \emph{of $E$ with coefficients in $K$}. Equivalently, $L_K(E)$ is the $K$-algebra generated by the set $\{v\mid v\in E^0\} \cup \{e,e^*\mid e\in E^1\}$, which satisfies the relations (V), (E1), (E2), and (CK1) above, together with the relation

(CK2) $v=\sum _{\{ e\in E^1\mid s(e)=v \}}ee^*$ for every regular vertex $v\in E^0$.   
\hfill $\Box$
\end{definition}

If $p = e_1 \dots e_n \in \path(E)$ ($e_1, \dots, e_n \in E^1$) is a path, then we set $p^*:=e_n^* \dots e_1^*$. An expression of this form is called a \emph{ghost path}. Given $p \in \path(E)$, we let $r(p^*)$ denote $s(p)$, and we let $s(p^*)$ denote $r(p)$. Setting $v^* := v$ for each $v \in E^0$, one can show that $$\{pq^* \mid p,q \in \path(E) \text{ such that } r(p)=r(q)\}$$ is a basis for $C_K(E)$ as a $K$-vector space, and hence this set also generates $L_K(E)$ as a $K$-vector space. It is possible to define the operation $()^*$ on all elements of $C_K(E)$ and $L_K(E)$ by setting $(pq^*)^* = qp^*$ for all $p, q \in \path(E)$, and then extending linearly. Thus, $C_K(E)$ and $L_K(E)$ can be equipped with an involution. (See~\cite{AP} for more details.) It is also easy to see that $C_K(E)$ and $L_K(E)$ are unital precisely when $E$ has finitely many vertices, in which case $1= \sum_{v \in E^0} v$.

Many well-known rings arise as Leavitt path algebras. For example, the classical Leavitt $K$-algebra $L_K(n)$ for $n\geq 2$ (see~\cite{Leavitt}) can be expressed as the Leavitt path algebra of the ``rose with $n$ petals" graph pictured below.
$$\xymatrix{ & {\bullet^v} \ar@(ur,dr) ^{e_1} \ar@(u,r) ^{e_2}
\ar@(ul,ur) ^{e_3} \ar@{.} @(l,u) \ar@{.} @(dr,dl) \ar@(r,d) ^{e_n}
\ar@{}[l] ^{\ldots} }$$
The full $d\times d$ matrix algebra $\M_d(K)$ over a field $K$ is isomorphic to the Leavitt path algebra of the oriented line graph having $d$ vertices, shown below.
$$\xymatrix{{\bullet}^{v_1} \ar [r] ^{e_1} & {\bullet}^{v_2}  \ar@{.}[r] & {\bullet}^{v_{d-1}} \ar [r]^{e_{d-1}} & {\bullet}^{v_d}}$$
Also, the Laurent polynomial algebra $K[x,x^{-1}]$ can be identified with the Leavitt path algebra of the following ``one vertex, one loop" graph.
$$\xymatrix{{\bullet}^{v} \ar@(ur,dr) ^x}$$

\section{Arbitrary Commutators}

In this section we shall describe all the elements of the commutator subspace of an arbitrary Leavitt path algebra. First let us recall some results from~\cite{AM}.

\begin{lemma}[Lemma 9 in~\cite{AM}]\label{annihilators}
Let $v\in E^0$ be a regular vertex, and let $y$ denote the element
$ v - \sum_{\{e\in E^1 \mid s(e)=v\}}ee^*$  of the ideal $N$ of $C_K(E)$, described in Definition \ref{LPAdefinition}. Then for all $p \in \path(E) \setminus E^0$ we have $yp=0=p^*y$.
\end{lemma}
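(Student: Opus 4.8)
The plan is to reduce the two identities to a single one and then verify that one by a direct computation with the defining relations of the Cohn path algebra. First I would note that the involution $(\ )^*$ fixes $y$: we have $v^* = v$ and $(ee^*)^* = e e^*$ for every $e \in E^1$, so $y^* = y$. Since $(\ )^*$ is an injective anti-automorphism of $C_K(E)$ with $(p^*)^* = p$, we get $(p^*y)^* = y^*(p^*)^* = yp$, and hence $p^*y = 0$ will follow once we know $yp = 0$. So it suffices to prove $yp = 0$ for every $p \in \path(E) \setminus E^0$.

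To do this, write $p = e_1 \cdots e_n$ with $e_1, \dots, e_n \in E^1$ and $n \geq 1$, and expand $yp = vp - \sum_{\{e \in E^1 \mid s(e) = v\}} ee^* p$. Using (V) and (E1) one has $vp = \delta_{v, s(p)}\, p$. For each edge $e$ with $s(e) = v$, relation (CK1) gives $e^* e_1 = \delta_{e, e_1}\, r(e_1)$, so $ee^* p = \delta_{e, e_1}\, e\, r(e_1)\, e_2 \cdots e_n$; in particular such a summand can be nonzero only when $e = e_1$, which forces $s(e_1) = v$, i.e. $s(p) = v$.

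I would then split into two cases. If $s(p) \neq v$, then $vp = 0$ and every summand $ee^*p$ vanishes, so $yp = 0$. If $s(p) = v$, then $e_1$ is one of the edges being summed over, the sum collapses to the single term $e_1 e_1^* p = e_1\, r(e_1)\, e_2 \cdots e_n$, and using (E1) and (E2) (namely $e_1 r(e_1) = e_1$ and $r(e_1) = s(e_2)$ when $n \geq 2$, with the case $n = 1$ treated directly) this equals $e_1 e_2 \cdots e_n = p$; hence $yp = p - p = 0$.

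The argument is elementary, so there is no genuine obstacle. The only points requiring care are that all of this must be carried out inside $C_K(E)$, where (CK2) is \emph{not} available, so only (V), (E1), (E2), and (CK1) may be invoked; and that the length-one case $n = 1$ must be handled separately when simplifying $e_1\, r(e_1)\, e_2 \cdots e_n$.
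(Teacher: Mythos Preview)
Your argument is correct. The reduction via the involution is clean: since $y^* = y$ and $(\ )^*$ is an involutive anti-automorphism of $C_K(E)$, the identity $p^*y = 0$ is equivalent to $yp = 0$, and your case split on whether $s(p) = v$ handles the latter directly from (V), (E1), and (CK1). The only cosmetic remark is that the length-one case needs no separate treatment: once you have $e_1 r(e_1) = e_1$ from (E1), the expression $e_1 r(e_1) e_2 \cdots e_n$ already equals $p$ regardless of whether $n = 1$ or $n \geq 2$ (when $n = 1$ the string $e_2 \cdots e_n$ is empty).

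As for comparison: the paper does not prove this lemma at all but simply imports it as Lemma~9 from~\cite{AM}, so there is no in-paper argument to compare against. Your direct verification is the natural one and is essentially what one would expect the original proof in~\cite{AM} to look like.
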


\begin{definition}\label{TDefinition}
Let $K$ be a field, and let $E$ be a graph. For each $v \in E^0$, let $\epsilon_v \in K^{(E^0)}$ denote the element with $1 \in K$ in the $v$-th coordinate and zeros elsewhere. Let $T : C_K(E) \rightarrow K^{(E^0)}$ be the $K$-linear map which acts as $$T(pq^*) = \left\{ \begin{array}{cl}
\epsilon_v & \textrm{if } q^*p = v\\
0 & \textrm{otherwise}
\end{array}\right.$$ 
on the aforementioned basis of $C_K(E)$. \hfill $\Box$
\end{definition}

\begin{lemma}[Lemmas 11 and 13 in~\cite{AM}] \label{trace2}
Let $K$ be a field, and let $E$ be  graph. Then the following hold. 
\begin{enumerate}
\item[$(1)$] For all $x,y \in C_K(E)$ we have $T(xy) = T(yx)$.
\item[$(2)$] For all $w \in N \subseteq C_K(E)$ we have $T(w) \in {\rm span}_K \{B_v \mid v \in E^0\} \subseteq K^{(E^0)}$.
\end{enumerate}
\end{lemma}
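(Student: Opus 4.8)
The plan is to reduce both parts to a single elementary description of $T$ on the standard basis, which I would record first: for a basis element $pq^*$ of $C_K(E)$ (so $p, q \in \path(E)$ with $r(p) = r(q)$), one has $T(pq^*) = \epsilon_{r(p)}$ if $p = q$, and $T(pq^*) = 0$ otherwise. This follows from Definition~\ref{TDefinition} once one checks that in $C_K(E)$ the product $q^*p$ lies in $E^0$ precisely when $p = q$: applying (CK1) repeatedly cancels matching initial edges of $p$ and $q$, and one is left with $0$ (if $p$ and $q$ disagree before the shorter is exhausted), with a ghost path of length $\bigl| |p| - |q| \bigr|$ (if $|q| > |p|$), with a path of the same length (if $|p| > |q|$), or with the vertex $r(p) = r(q)$ (exactly when $p = q$). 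Thus $T$ simply reads off the coefficients of the ``diagonal'' basis elements $pp^*$.

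For part~(1), by $K$-linearity of $T$ and $K$-bilinearity of multiplication it suffices to show $T(ab) = T(ba)$ for basis elements $a = pq^*$ and $b = rs^*$. I would multiply out $ab = pq^*rs^*$: the inner factor $q^*r$ is nonzero only when one of $q, r$ is a prefix of the other, and in those cases $ab$ collapses to $p(s\mu)^*$ (when $q = r\mu$) or to $(p\nu)s^*$ (when $r = q\nu$). By the preliminary fact, $T(ab) \neq 0$ then forces $p = s\mu$, respectively $s = p\nu$; computing $ba = rs^*pq^*$ the same way, under these constraints it collapses to $qq^*$, respectively to the mirror expression, so that $T(ab)$ and $T(ba)$ coincide (both equal $\epsilon_{r(\mu)}$, respectively $\epsilon_{r(\nu)}$), while if no matching prefix relation holds both traces vanish. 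This is the step I expect to take the most care: one must run the prefix/cancellation analysis exhaustively — including the degenerate cases where one of the four paths has length $0$ — and keep track of the range constraints $r(p) = r(q)$, $r(r) = r(s)$ that are built into the notion of a basis element. There is no conceptual difficulty here, only bookkeeping.

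For part~(2), I would use part~(1) to reduce the problem: an element $w \in N$ is a $K$-linear combination of terms $a\, y_v\, b$ (with $v \in E^0$ regular, $y_v = v - \sum_{\{e \in E^1 \mid s(e) = v\}} ee^*$, and $a$ or $b$ possibly absent), and since $T(a\, y_v\, b) = T(y_v\, ba)$ it is enough to prove $T(y_v c) \in \mathrm{span}_K\{B_u \mid u \in E^0\}$ for every basis element $c = pq^*$. Here Lemma~\ref{annihilators} carries most of the weight: $y_v p = 0$ whenever $|p| \geq 1$, and $y_v u = \delta_{u,v}\, y_v$ for $u \in E^0$ (since $y_v v = y_v$, so $y_v u = y_v v u = \delta_{u,v} y_v$), so $T(y_v pq^*) = 0$ unless $p = v$, in which case $r(q) = v$ and $T(y_v pq^*) = T(y_v q^*)$. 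Expanding $y_v q^* = q^* - \sum_{\{e \in E^1 \mid s(e) = v\}} e(qe)^*$ and invoking the preliminary fact, every term has trace $0$ unless $q$ is the trivial path at $v$; and for $q = v$ a direct computation gives $T(y_v v) = \epsilon_v - \sum_{\{e \in E^1 \mid s(e) = v\}} \epsilon_{r(e)} = -B_v$, reading off $B_v$ from Definition~\ref{number-def}. Hence $T(y_v c)$ is always $0$ or a $K$-multiple of $-B_v$, so $T(w) \in \mathrm{span}_K\{B_u \mid u \in E^0\}$. Note that, beyond Definition~\ref{TDefinition}, the only ingredient used is Lemma~\ref{annihilators}; everything else is formal.
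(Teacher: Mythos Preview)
Your proposal is correct and follows essentially the same strategy the paper uses for the parallel Lemma~\ref{T2lemma} (the paper does not reprove Lemma~\ref{trace2} itself, citing \cite{AM}): reduce part~(1) to basis elements and case-split on prefix relations, and for part~(2) use part~(1) to cycle the generator $y_v$ to one end, then invoke Lemma~\ref{annihilators} to reduce to computing $T(y_v)$. One small point worth tightening in your write-up of~(1): your case analysis establishes that $T(ab)\neq 0$ forces $T(ab)=T(ba)$, but the closing clause ``if no matching prefix relation holds both traces vanish'' does not literally cover the situation where $q^*r=0$ (so $ab=0$) yet $s^*p\neq 0$; the clean fix---which the paper makes explicit---is to invoke the symmetry in $a,b$ to handle the case $T(ba)\neq 0$.
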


\begin{theorem}[Theorem 14 in~\cite{AM}] \label{LPA-comm}
Let $K$ be a field, let $E$ be a graph, and let $\, \{k_v \mid v\in E^0\} \subseteq  K$ be a set of scalars where  $k_v = 0$ for all but finitely many $v \in E^0$.  Then 
$$\sum_{v\in E^0}k_vv \in [L_K(E), L_K(E)] \mbox{ if and only if } \, (k_v)_{v\in E^0} \in \mathrm{span}_K\{B_v \mid v \in E^0\}.$$ 
\end{theorem}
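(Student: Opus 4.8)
The plan is to prove both implications directly, leaning on the ``trace'' map $T$ of Definition~\ref{TDefinition} together with Lemma~\ref{trace2}, and exhibiting the relevant commutators explicitly via relations (CK1) and (CK2).

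For the implication ``$\Leftarrow$'', I would show that each $B_v$ corresponds to an element of $[L_K(E), L_K(E)]$ and then invoke the fact that the commutator subspace is a $K$-subspace. If $v$ is not a regular vertex there is nothing to prove, since $B_v = 0$. If $v$ is regular, then for every edge $e$ with $s(e) = v$ relation (CK1) gives $[e, e^*] = ee^* - e^*e = ee^* - r(e)$; summing over all such $e$ and applying relation (CK2) yields
$$\sum_{\{e \in E^1 \mid s(e) = v\}} [e, e^*] = v - \sum_{u \in E^0} a_{vu} u,$$
which is exactly the image in $L_K(E)$ of $-B_v \in K^{(E^0)}$ under the identification $(c_u)_{u \in E^0} \mapsto \sum_u c_u u$. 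Hence every $B_v$, and so every vector in $\mathrm{span}_K\{B_v \mid v \in E^0\}$, maps into $[L_K(E), L_K(E)]$.

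For the implication ``$\Rightarrow$'' I would lift to the Cohn path algebra. Let $\pi : C_K(E) \to L_K(E) = C_K(E)/N$ be the canonical projection, and suppose $\sum_v k_v v \in [L_K(E), L_K(E)]$. Writing this element as a finite sum of commutators in $L_K(E)$ and choosing $\pi$-preimages of the terms, we obtain $x_i, y_i \in C_K(E)$ with $\pi\big(\sum_i [x_i, y_i]\big) = \sum_v k_v v$, so that $w := \sum_v k_v v - \sum_i [x_i, y_i]$ lies in $N$. Now apply $T$. On one hand, $T(v) = \epsilon_v$ by Definition~\ref{TDefinition} and $T([x_i, y_i]) = 0$ by Lemma~\ref{trace2}(1), so $T(w) = (k_v)_{v \in E^0}$. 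On the other hand, $w \in N$, so Lemma~\ref{trace2}(2) gives $T(w) \in \mathrm{span}_K\{B_v \mid v \in E^0\}$. Comparing the two, $(k_v)_{v \in E^0} \in \mathrm{span}_K\{B_v \mid v \in E^0\}$, as desired.

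Given Definition~\ref{TDefinition} and Lemma~\ref{trace2}, the argument above is really just bookkeeping, so I do not expect any serious obstacle inside the theorem itself; all the weight has been pushed onto Lemma~\ref{trace2}, and in particular onto part (2), whose proof is where one must actually understand how arbitrary elements of the ideal $N$ decompose (this is where Lemma~\ref{annihilators} would be used). The only points here warranting a little care are checking that the identification $K^{(E^0)} \leftrightarrow \mathrm{span}_K\{v \mid v \in E^0\}$ is compatible with $\pi$, and being explicit that a $\pi$-preimage of a sum of commutators can be taken to be a sum of commutators.
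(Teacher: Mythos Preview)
Your proof is correct. Note that the present paper does not actually prove this theorem---it is quoted verbatim from~\cite{AM}---so there is no in-paper proof to compare against; however, your argument uses precisely the tools (Definition~\ref{TDefinition} and Lemma~\ref{trace2}) that the paper imports from~\cite{AM} for exactly this purpose, and the paper itself runs the same ``lift to $C_K(E)$, apply $T$, invoke Lemma~\ref{trace2}'' maneuver in the proof of Lemma~\ref{commsums2}, so your approach is certainly the intended one.
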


\begin{lemma}[Lemma 15 in~\cite{AM}]\label{commutatorlemma}
Let $K$ be a field, $E$ a  graph, and $p,q \in \path(E) \setminus E^0$ any paths.
\begin{enumerate}
\item[$(1)$] If $s(p) \neq r(p)$, then $p,p^* \in  [L_K(E), L_K(E)]$.
\item[$(2)$] If $p\neq qx$ and $q \neq px$ for all $x \in \clpath(E)$, then $pq^* \in  [L_K(E), L_K(E)]$.
\end{enumerate}
\end{lemma}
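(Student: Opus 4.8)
The plan is to establish both parts by direct computation with the defining relations (V), (E1), (E2), (CK1) of $C_K(E)$, which hold a fortiori in the quotient $L_K(E)$; the only substantive point is a bookkeeping analysis of the product $q^*p$.

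For part (1): since $p \in \path(E)\setminus E^0$, the relations (E1) give $p = s(p)\,p$ and $p = p\,r(p)$, whence $p\,s(p) = p\,r(p)\,s(p) = 0$ because $r(p)\neq s(p)$ and distinct vertices are orthogonal idempotents by (V). Therefore $[s(p),p] = s(p)\,p - p\,s(p) = p$, so $p \in [L_K(E),L_K(E)]$. The statement for $p^*$ is dual: by (E2), $p^* = r(p)\,p^* = p^*\,s(p)$, so $s(p)\,p^* = s(p)\,r(p)\,p^* = 0$, and $[p^*,s(p)] = p^*\,s(p) - s(p)\,p^* = p^*$.

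For part (2), first dispose of the case $r(p)\neq r(q)$: by (E1), (E2), (V) we get $p\,q^* = p\,r(p)\,r(q)\,q^* = 0$, so there is nothing to prove; assume henceforth $r(p) = r(q)$. The key computation is that $q^*p$ can be evaluated completely by iterating (CK1). Writing $q = f_1\cdots f_m$ and $p = e_1\cdots e_n$ with all $e_i, f_j \in E^1$, one cancels $f_i^*e_i = \delta_{f_i,e_i}\,r(f_i)$ working outward from the middle; the product is $0$ the moment $f_i\neq e_i$ for some $i\leq\min(m,n)$, and otherwise it collapses either to the surviving path $\alpha := e_{m+1}\cdots e_n$ when $m\leq n$ (so that $p = q\alpha$, where $\alpha$ is read as the vertex $r(q)$ if $m=n$), or to the surviving ghost path $\beta^* := f_{n+1}^*\cdots f_m^*$ when $m>n$ (so that $q = p\beta$). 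In either of the nonzero cases, the normalization $r(p) = r(q)$ forces $s(\alpha) = r(\alpha) = r(q)$ (respectively $s(\beta) = r(\beta) = r(p)$), so $\alpha$ (respectively $\beta$) lies in $\clpath(E)$. Hence the hypothesis that $p\neq qx$ and $q\neq px$ for every $x\in\clpath(E)$ excludes both nonzero cases, leaving $q^*p = 0$. Consequently $p\,q^* = p\,q^* - q^*p = [p,q^*] \in [L_K(E),L_K(E)]$.

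I expect the only delicate step to be this case analysis for $q^*p$: pinning down exactly when it vanishes, and recognizing the surviving factor in the nonvanishing case as a closed path (which is why the reduction to $r(p)=r(q)$ is made first). Everything else comes down to the two one-line commutator identities above, and the hypothesis of part (2) turns out to be precisely the condition guaranteeing $q^*p = 0$.
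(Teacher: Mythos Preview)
Your proof is correct. Note, however, that the paper does not supply its own proof of this lemma: it is quoted verbatim as Lemma~15 of~\cite{AM} and used as a black box. So there is no in-paper argument to compare against.

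That said, your argument is the natural one and almost certainly what~\cite{AM} does. The only point worth flagging is that in part~(2) you should make explicit that the case $p=q$ (i.e., $m=n$ with all edges matching, giving $q^*p = r(q)$) is indeed excluded by the hypothesis: here $\alpha = r(q) \in E^0 \subseteq \clpath(E)$, so $p = q\alpha$ with $\alpha$ a closed path of length zero, contradicting the assumption. You implicitly cover this with the parenthetical ``where $\alpha$ is read as the vertex $r(q)$ if $m=n$,'' but it is worth saying out loud that vertices count as closed paths in this paper's conventions.
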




Our goal for the rest of the section is to generalize Theorem~\ref{LPA-comm} by describing all the elements in an arbitrary Leavitt path algebra that can be expressed as sums of commutators. The argument proceeds through five lemmas.

\begin{lemma} \label{commsums1}
Let $K$ be a field, $E$ a  graph, $k_1, \dots, k_n \in K$, and $p_1, \dots, p_n \in {\rm Path}(E)$. Then $$\sum_{i=1}^n k_ip_ip_i^* \in [L_K(E), L_K(E)] \ \text{if and only if} \ \sum_{i=1}^n k_ir(p_i) \in [L_K(E), L_K(E)].$$
\end{lemma}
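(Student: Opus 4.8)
The plan is to reduce the statement to Theorem~\ref{LPA-comm} by producing, for each $i$, an explicit element whose commutator links $k_ip_ip_i^*$ to $k_ir(p_i)$, modulo things already known to lie in $[L_K(E),L_K(E)]$. The key observation is that for a path $p$ with $r(p)=v$ we have $p^*p = v$ (by repeated use of (CK1)), while $pp^*$ is the ``other order'' product. So $pp^* - r(p) = pp^* - p^*p = [p, p^*] \in [L_K(E),L_K(E)]$, and multiplying by $k_i$ and summing over $i$ gives
$$\sum_{i=1}^n k_ip_ip_i^* - \sum_{i=1}^n k_ir(p_i) \ = \ \sum_{i=1}^n k_i[p_i, p_i^*] \ \in \ [L_K(E),L_K(E)].$$
Here one has to be slightly careful when $p_i \in E^0$ is a vertex (length-zero path): in that case $p_ip_i^* = p_i = r(p_i)$, so the corresponding summand is simply $0$ and contributes nothing, which is consistent. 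Once this single displayed containment is established, the lemma follows immediately: $\sum k_ip_ip_i^* \in [L_K(E),L_K(E)]$ if and only if $\sum k_ir(p_i) \in [L_K(E),L_K(E)]$, since the two elements differ by an element of $[L_K(E),L_K(E)]$ and $[L_K(E),L_K(E)]$ is a $K$-subspace.

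The main (really the only) thing to verify carefully is the identity $p^*p = r(p)$ for an arbitrary path $p = e_1\cdots e_m$ with $e_1,\dots,e_m\in E^1$. I would prove this by induction on $m$: for $m=1$ it is exactly (CK1), namely $e_1^*e_1 = r(e_1)$; for the inductive step, write $p = e_1 p'$ with $p' = e_2\cdots e_m$, so $p^*p = (p')^*e_1^*e_1p' = (p')^* r(e_1) p' = (p')^* s(e_2) p' = (p')^*p'$ using (CK1) and (E1), and then apply the inductive hypothesis, noting $r(p')=r(p)$. This computation takes place already in the Cohn path algebra $C_K(E)$ and does not even require the (CK2) relation, so it holds a fortiori in $L_K(E)$. (Alternatively, one could cite that this is a standard identity in Leavitt path algebras.)

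I do not anticipate any real obstacle: the argument is a direct telescoping of commutators, and everything reduces to the elementary multiplication rules in Definition~\ref{LPAdefinition} together with the fact that $[L_K(E),L_K(E)]$ is closed under $K$-linear combinations. The only point deserving a written remark is the degenerate case where some $p_i$ is a vertex, and the observation that $[p_i,p_i^*]$ is literally a commutator in $L_K(E)$ (so no appeal to Lemma~\ref{commutatorlemma} is needed for this step). This lemma is evidently intended as the first reduction in the chain of five lemmas leading to the general classification, peeling off the ``diagonal'' terms $pp^*$ and handing the remaining work to the already-known Theorem~\ref{LPA-comm}.
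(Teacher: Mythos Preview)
Your argument is correct and is essentially identical to the paper's: both simply use the identity $p_ip_i^* = [p_i,p_i^*] + p_i^*p_i = [p_i,p_i^*] + r(p_i)$ and sum, so the two displayed elements differ by $\sum_i k_i[p_i,p_i^*] \in [L_K(E),L_K(E)]$. Your added verification that $p^*p = r(p)$ and the remark on the degenerate vertex case are harmless elaborations, though note that Theorem~\ref{LPA-comm} is not actually invoked here (that reduction happens later in the chain of lemmas).
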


\begin{proof}
We have $$\sum_{i=1}^n k_ip_ip_i^* = \sum_{i=1}^n k_i([p_i,p_i^*] + r(p_i)) = \sum_{i=1}^n k_ir(p_i) + \sum_{i=1}^n k_i[p_i,p_i^*],$$ from which the lemma follows.
\end{proof}

\begin{lemma} \label{commsums2}
Let $K$ be a field, $E$ a  graph, $a_1, \dots, a_n, b_1, \dots, b_m \in K$, and $$p_1, \dots, p_n, q_1, \dots, q_m, t_1, \dots, t_m \in {\rm Path}(E),$$ where $q_i \neq t_i$ for all $i \in \{1, \dots, m\}$. Then $$\sum_{i=1}^n a_ip_ip_i^* + \sum_{i=1}^m b_iq_it_i^*\in [L_K(E), L_K(E)]$$ if and only if $$\sum_{i=1}^n a_ip_ip_i^* \in [L_K(E), L_K(E)] \ \text{and} \ \sum_{i=1}^m b_iq_it_i^*\in [L_K(E), L_K(E)].$$
\end{lemma}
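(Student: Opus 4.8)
The ``if'' direction is immediate, since $[L_K(E),L_K(E)]$ is a $K$-subspace of $L_K(E)$, so all the content lies in the ``only if'' direction. The plan is to separate the ``diagonal'' sum $\sum_{i=1}^n a_ip_ip_i^*$ from the ``off-diagonal'' sum $\sum_{i=1}^m b_iq_it_i^*$ by means of the standard $\Z$-grading on $L_K(E)$.

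First I would record this grading: declaring $\deg(v)=0$ for $v\in E^0$ and $\deg(e)=1$, $\deg(e^*)=-1$ for $e\in E^1$ makes every defining relation (V), (E1), (E2), (CK1) of $C_K(E)$ homogeneous, so $C_K(E)$ is $\Z$-graded; moreover the generators $v-\sum_{\{e\in E^1\mid s(e)=v\}}ee^*$ (for regular $v$) of the ideal $N$ are homogeneous of degree $0$, so $N$ is a graded ideal and the grading descends to $L_K(E)=\bigoplus_{k\in\Z}L_K(E)_k$, with $\deg(pq^*)=|p|-|q|$. The structural fact I need is that $[L_K(E),L_K(E)]$ is itself a graded subspace: if $z=\sum_j[a_j,b_j]$ and we expand each $a_j$ and $b_j$ into homogeneous components $a_j=\sum_m a_{j,m}$, $b_j=\sum_n b_{j,n}$, then the degree-$k$ component of $z$ is $\sum_j\sum_{m+n=k}[a_{j,m},b_{j,n}]$, which is again a sum of commutators of elements of $L_K(E)$; hence every homogeneous component of an element of $[L_K(E),L_K(E)]$ again lies in $[L_K(E),L_K(E)]$.

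With this in hand the argument is short. Set $F_0 := \sum_{i\,:\,|q_i|=|t_i|} b_iq_it_i^*$. Each $p_ip_i^*$ has degree $0$ and each $q_it_i^*$ has degree $|q_i|-|t_i|$, so the degree-$0$ component of $\sum_{i=1}^n a_ip_ip_i^*+\sum_{i=1}^m b_iq_it_i^*$ equals $\sum_{i=1}^n a_ip_ip_i^*+F_0$, and by the previous paragraph this element lies in $[L_K(E),L_K(E)]$. Next, $F_0$ lies in $[L_K(E),L_K(E)]$ on its own: for each of its summands $q_it_i^*$ we have $q_i\neq t_i$ and $|q_i|=|t_i|$, so either both $q_i,t_i$ are vertices and $q_it_i^*=0$, or neither is a vertex and---since $q_i,t_i$ have equal length and are distinct---neither can be obtained from the other by appending a closed path, whence $q_it_i^*\in[L_K(E),L_K(E)]$ by Lemma~\ref{commutatorlemma}(2). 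Subtracting $F_0$ gives $\sum_{i=1}^n a_ip_ip_i^*\in[L_K(E),L_K(E)]$, and then $\sum_{i=1}^m b_iq_it_i^*=\big(\sum_{i=1}^n a_ip_ip_i^*+\sum_{i=1}^m b_iq_it_i^*\big)-\sum_{i=1}^n a_ip_ip_i^*\in[L_K(E),L_K(E)]$ as well, which is what we wanted.

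I do not expect a genuine obstacle: the only two points needing a little care are the (routine) verification that $[L_K(E),L_K(E)]$ is a graded subspace and the small case-check that lets Lemma~\ref{commutatorlemma}(2) apply to the degree-$0$ off-diagonal summands, i.e. recognizing that two distinct paths of equal length cannot be related by appending a closed path. An alternative that avoids naming the grading would instead invoke Lemma~\ref{commutatorlemma}(2) up front to discard every summand $q_it_i^*$ with $q_i,t_i$ unrelated by closed paths, after which all surviving off-diagonal summands have nonzero degree and can be peeled off using the path-length filtration---but the graded formulation is the cleanest.
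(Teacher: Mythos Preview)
Your argument is correct, but it takes a genuinely different route from the paper's proof. The paper does not invoke the $\Z$-grading at all; instead it applies the trace map $T:C_K(E)\to K^{(E^0)}$ of Definition~\ref{TDefinition} to a lift of the element in $C_K(E)$. Since $T$ kills commutators (Lemma~\ref{trace2}(1)) and annihilates every $q_it_i^*$ with $q_i\neq t_i$, while $T(p_ip_i^*)=\epsilon_{r(p_i)}$, one obtains $\sum_i a_i\epsilon_{r(p_i)}=T(w)\in\mathrm{span}_K\{B_v\}$ by Lemma~\ref{trace2}(2); then Theorem~\ref{LPA-comm} and Lemma~\ref{commsums1} convert this into $\sum_i a_ip_ip_i^*\in[L_K(E),L_K(E)]$. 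Your approach trades this trace computation for the observation that $[L_K(E),L_K(E)]$ is a graded subspace, together with the elementary case of Lemma~\ref{commutatorlemma}(2) applied to the degree-$0$ off-diagonal terms. The advantage of your method is that it is self-contained and avoids appealing to Theorem~\ref{LPA-comm}; the paper's method, on the other hand, ties the lemma directly into the trace formalism that drives the rest of the argument and foreshadows the role of the span of the $B_v$.
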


\begin{proof}
The ``if" direction is clear. For the converse, viewing $L_K(E)$ as $C_K(E)/N$, we shall show that if $$\sum_{i=1}^n a_ip_ip_i^* + \sum_{i=1}^m b_iq_it_i^* + N \in [L_K(E), L_K(E)]$$ for some $a_i, b_i \in K$ and $p_i, q_i, t_i \in {\rm Path}(E)$ satisfying $q_i \neq t_i$ for all $i \in \{1, \dots, m\}$, then $$\sum_{i=1}^n a_ip_ip_i^* + N  \in [L_K(E), L_K(E)] \ \text{and} \ \sum_{i=1}^m b_iq_it_i^* + N  \in [L_K(E), L_K(E)].$$ Now, if $$\sum_{i=1}^n a_ip_ip_i^* + \sum_{i=1}^m b_iq_it_i^* + N \in [L_K(E), L_K(E)],$$ then there are elements $x_j,y_j \in C_K(E)$ and $w \in N$ such that $$\sum_{i=1}^n a_ip_ip_i^* +   \sum_{i=1}^m b_iq_it_i^* = \sum_j [x_j,y_j]  + w.$$ By Definition~\ref{TDefinition} and Lemma~\ref{trace2}(1), we have $$\sum_{i=1}^n a_iT(r(p_i)) = T(\sum_{i=1}^n a_ip_ip_i^* +   \sum_{i=1}^m b_iq_it_i^*) =T( \sum_j [x_j,y_j] ) + T(w) = 0 + T(w) = T(w).$$
By Theorem~\ref{LPA-comm} and Lemma~\ref{trace2}(2), this implies that $$\sum_{i=1}^n a_ir(p_i) + N \in [L_K(E), L_K(E)].$$ Hence, by Lemma~\ref{commsums1}, $$\sum_{i=1}^n a_ip_ip_i^* + N \in [L_K(E), L_K(E)],$$ and therefore also $$\sum_{i=1}^m b_iq_it_i^* + N  \in [L_K(E), L_K(E)],$$ as desired.
\end{proof}

\begin{lemma} \label{commsums3}
Let $K$ be a field, $E$ a  graph, and $a_1, \dots, a_n, b_1, \dots, b_m \in K$. Also let $$p_1, \dots, p_n, x_1, \dots, x_n, q_1, \dots, q_m, t_1, \dots, t_m \in {\rm Path}(E)$$ be such that $p_ix_i \neq 0 \neq q_it_i^*$ for all $i$. Then $$\sum_{i=1}^n a_ip_ix_ip_i^* + \sum_{i=1}^m b_iq_it_i^*q_i^*\in [L_K(E), L_K(E)]$$ if and only if $$\sum_{i=1}^n a_ix_i + \sum_{i=1}^m b_it_i^*\in [L_K(E), L_K(E)].$$
\end{lemma}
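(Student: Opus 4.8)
The plan is to show directly that the difference
$$\sum_{i=1}^n a_i p_i x_i p_i^* + \sum_{i=1}^m b_i q_i t_i^* q_i^* - \Bigl( \sum_{i=1}^n a_i x_i + \sum_{i=1}^m b_i t_i^* \Bigr)$$
already lies in $[L_K(E), L_K(E)]$; the asserted equivalence is then immediate, since $[L_K(E),L_K(E)]$ is a $K$-subspace. Everything reduces to a termwise commutator identity, so in contrast to Lemma~\ref{commsums2} there is no need to pass to $C_K(E)$ or to invoke the trace map $T$.

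First I would record the standard identity $p^* p = r(p)$ for every path $p \in \path(E)$: for $|p| > 0$ it follows from repeated application of (CK1) together with (E1) and (E2), and for $|p| = 0$ it is just (V). Then, for each $i$, expanding the commutators gives
$$p_i x_i p_i^* = [p_i x_i, p_i^*] + p_i^* p_i x_i = [p_i x_i, p_i^*] + r(p_i) x_i$$
and
$$q_i t_i^* q_i^* = [q_i t_i^*, q_i^*] + q_i^* q_i t_i^* = [q_i t_i^*, q_i^*] + r(q_i) t_i^*.$$
This is where the hypotheses are used: $p_i x_i \neq 0$ forces $r(p_i) = s(x_i)$, whence $r(p_i) x_i = x_i$ by (E1) (or by (V) when $|x_i| = 0$); and $q_i t_i^* \neq 0$ forces $r(q_i) = r(t_i) = s(t_i^*)$, whence $r(q_i) t_i^* = t_i^*$ by (E2) (or by (V) when $|t_i| = 0$). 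Thus $p_i x_i p_i^* = [p_i x_i, p_i^*] + x_i$ and $q_i t_i^* q_i^* = [q_i t_i^*, q_i^*] + t_i^*$ for every $i$.

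Summing these identities with coefficients $a_i$ and $b_i$ yields
$$\sum_{i=1}^n a_i p_i x_i p_i^* + \sum_{i=1}^m b_i q_i t_i^* q_i^* = \Bigl( \sum_{i=1}^n a_i x_i + \sum_{i=1}^m b_i t_i^* \Bigr) + \Bigl( \sum_{i=1}^n a_i [p_i x_i, p_i^*] + \sum_{i=1}^m b_i [q_i t_i^*, q_i^*] \Bigr),$$
and the second bracket is a $K$-linear combination of commutators, hence lies in $[L_K(E), L_K(E)]$. This is exactly the desired conclusion.

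I do not anticipate a real obstacle: the argument is a one-line identity applied termwise. The only points needing attention are the degenerate cases in which some of $p_i, x_i, q_i, t_i$ have length zero, so that (V) replaces (E1)/(E2), and the observation that the hypotheses $p_i x_i \neq 0$ and $q_i t_i^* \neq 0$ serve solely to collapse $r(p_i) x_i$ and $r(q_i) t_i^*$ to $x_i$ and $t_i^*$ — without them the termwise identities fail (for instance a term $p_i x_i p_i^*$ can vanish while $x_i$ does not), which is also why these hypotheses cannot be omitted.
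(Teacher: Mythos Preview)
Your proof is correct and is essentially the same as the paper's: the paper also writes $p_ix_ip_i^* = [p_ix_i,p_i^*] + x_i$ and $q_it_i^*q_i^* = [q_it_i^*,q_i^*] + t_i^*$ termwise and sums. You have simply spelled out in more detail why the hypotheses $p_ix_i \neq 0$ and $q_it_i^* \neq 0$ are needed to reduce $r(p_i)x_i$ and $r(q_i)t_i^*$ to $x_i$ and $t_i^*$, which the paper leaves implicit.
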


\begin{proof}
We have $$\sum_{i=1}^n a_ip_ix_ip_i^* + \sum_{i=1}^m b_iq_it_i^*q_i^* = \sum_{i=1}^n a_i ([p_ix_i,p_i^*] + x_i) + \sum_{i=1}^m b_i ([q_it_i^*,q_i^*] + t_i^*)$$ $$=\sum_{i=1}^n a_ix_i + \sum_{i=1}^m b_it_i^* + \sum_{i=1}^n a_i [p_ix_i,p_i^*] + \sum_{i=1}^m b_i [q_it_i^*,q_i^*],$$ from which the lemma follows.
\end{proof}

\begin{definition}
Let $E$ be a graph and $p, q \in \clpath(E)$. We write $p \sim q$ if there exist paths $x, y \in \path(E)$ such that $p=xy$ and $q=yx$. \hfill $\Box$
\end{definition}

\begin{lemma}
For any graph $E$, $\sim$ is an equivalence relation on the elements of $\,\clpath(E)$.
\end{lemma}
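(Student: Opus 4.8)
The plan is to verify the three defining properties of an equivalence relation directly from the definition, being careful about the role of length-zero paths (vertices). Reflexivity is the easiest: given $p \in \clpath(E)$, write $p = pv$ where $v = r(p) = s(p)$, and take $x = p$, $y = v$; then $p = xy$ and $yx = vp = p$ as well (using (E1) and property (V)), so $p \sim p$. Symmetry is immediate from the definition: if $p = xy$ and $q = yx$ for some $x, y \in \path(E)$, then swapping the roles of $x$ and $y$ exhibits $q \sim p$. Note that for these two properties one should observe that $x$ and $y$ above are genuinely paths that compose correctly, i.e. $r(x) = s(y)$ and $r(y) = s(x)$, which forces both $xy$ and $yx$ to be closed paths — this is automatic since $p, q \in \clpath(E)$.

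The transitivity step is where the real work lies. Suppose $p \sim q$ and $q \sim r$, so there are paths $x, y$ with $p = xy$, $q = yx$, and paths $z, w$ with $q = zw$, $r = wz$. The two factorizations $q = yx = zw$ of the same path must be compared. Since a path is a finite sequence of edges, either $|y| \le |z|$ or $|z| \le |y|$; by symmetry assume $|y| \le |z|$. Then $y$ is an initial segment of $z$, so we can write $z = ya$ for some path $a$ (with $a$ possibly a vertex), and matching the remaining edges gives $x = aw$. Now compute: $p = xy = awy$, and $r = wz = wya$. Setting $x' = aw$ and $y' = y$ does not immediately help, so instead set $x' = a$ and $y' = wy$: then $x'y' = a w y = p$ and $y'x' = wya = r$, witnessing $p \sim r$. (In the other case $|z| \le |y|$, one writes $y = zb$, $w = bx$, and argues symmetrically.)

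The main obstacle — really the only subtle point — is bookkeeping about vertices versus positive-length paths and making sure all the concatenations written above are legal paths with matching source/range. For instance, when I write $z = ya$ I must check $r(y) = s(a)$ and that the range of $a$ equals $r(z) = r(q)$; when I then form $wya$ I need $r(w) = s(y)$, which holds because $q = zw$ forces $r(z) = s(w)$ wait — one must instead track that $q = zw$ gives $s(q) = s(z)$ and $r(q) = r(w)$, and that $q$ closed means $s(z) = r(w)$, so $w$ can indeed be prepended to $z$ to form the closed path $r = wz$. All of these are routine once one commits to writing out the source/range conditions, but they are exactly the kind of detail that must not be skipped, since the definition of $\sim$ is stated only in terms of equality of path expressions and it is the underlying graph structure that guarantees the concatenations make sense. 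I would present transitivity with these incidence conditions made explicit, and dispatch reflexivity and symmetry in one or two lines each.
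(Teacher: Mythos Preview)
Your proposal is correct and follows essentially the same approach as the paper: reflexivity and symmetry are dispatched in a line or two, and transitivity is proved by comparing the two factorizations of the middle path and observing that one initial segment must be a prefix of the other, yielding the desired factorization of $p$ and $r$. The paper's write-up is terser and omits the source/range bookkeeping you (rightly) flag as the only subtle point, but the argument is the same.
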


\begin{proof}
Let $p, q \in \clpath(E)$. Then clearly, $p \sim q$ if and only if $q \sim p$. Also $p \sim p$, since $r(p)=s(p)$, and hence $p = pr(p) = r(p)p$. Now, let $p,q,t \in \clpath(E)$ be such that $p \sim q$ and $q \sim t$. Then there are $w, x, y, z \in \path (E)$ such that $p = wx$, $q = xw = yz$, and $t = zy$. Since $xw=yz$, there must be some $u \in \path(E)$ such that either $y = xu$ or $x=yu$. In the first case, $w = uz$, and so $p=u(zx)$ and $t=(zx)u$. While in the second case, $z=uw$, and so $p=(wy)u$ and $t=u(wy)$. Either way, $p \sim t$.
\end{proof}

Given an element $t \in \clpath(E)$, we shall denote by $[t]$ the $\sim$-equivalence class of $t$. Note that if $t \in E^0$, then $[t] = \{t\}$.

Let us next define analogues of the trace function $T$ from Definition~\ref{TDefinition} for real and ghost closed paths that are not vertices. We shall use them to describe when linear combinations of such paths can be expressed as sums of commutators.

\begin{definition}\label{T2definition}
Let $K$ be a field, let $E$ be a graph, and denote by $S$ the set of all $\sim$-equivalence classes of elements of $\clpath(E)\setminus E^0$. For each $s \in S$, let $\epsilon_s \in K^{(S)}$ denote the element with $1 \in K$ in the $s$-th coordinate and zeros elsewhere.   Let $T_S : C_K(E) \rightarrow K^{(S)}$ be the $K$-linear map which acts as $$T_S(pq^*) = \left\{ \begin{array}{cl}
\epsilon_{[t]} & \text{if } q^*p = t \text{ for some } t\in \clpath(E)\setminus E^0\\
0 & \textrm{otherwise}
\end{array}\right.$$ 
on the basis of $C_K(E)$ described in Section~\ref{NotationSect}. Also, let $T_{S^*} : C_K(E) \rightarrow K^{(S)}$ be the $K$-linear map which acts as $$T_{S^*}(pq^*) = \left\{ \begin{array}{cl}
\epsilon_{[t]} & \text{if } q^*p=t^* \text{ for some } t\in \clpath(E)\setminus E^0\\
0 & \textrm{otherwise}
\end{array}\right.$$ 
on the same basis.\hfill $\Box$
\end{definition}

We note that for all $p, q \in \path(E)$, we have $T_S(pq^*) = T_{S^*}(qp^*)$, and hence $T_S(\mu) = T_{S^*}(\mu^*)$ for all $\mu \in L_K(E)$.

The next lemma is an analogue of Lemma~\ref{trace2} for $T_S$ and $T_{S^*}$.

\begin{lemma} \label{T2lemma}
Let $K$ be a field, and let $E$ be  graph. Then the following hold.
\begin{enumerate}
\item[$(1)$] For all $x,y \in C_K(E)$ we have $T_S(xy) = T_S(yx)$ and  $T_{S^*}(xy) = T_{S^*}(yx)$.  
\item[$(2)$] For all $w \in N \subseteq C_K(E)$ we have $T_S(w) = 0 = T_{S^*}(w)$.
\end{enumerate}
\end{lemma}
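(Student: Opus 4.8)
The plan is to mirror the proof of Lemma~\ref{trace2} (= Lemmas 11 and 13 in~\cite{AM}), adapting the bookkeeping to the coarser "trace" functions $T_S$ and $T_{S^*}$. For part (1), it suffices by linearity to check $T_S(xy) = T_S(yx)$ when $x = p_1q_1^*$ and $y = p_2q_2^*$ are basis elements of $C_K(E)$, and similarly for $T_{S^*}$. I would first reduce $T_{S^*}$ to $T_S$ using the already-noted identity $T_S(\mu) = T_{S^*}(\mu^*)$: since $(xy)^* = y^*x^*$ and $(yx)^* = x^*y^*$, the statement $T_{S^*}(xy) = T_{S^*}(yx)$ is equivalent to $T_S(y^*x^*) = T_S(x^*y^*)$, which is an instance of the $T_S$ claim. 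So I only need to prove $T_S(xy) = T_S(yx)$.

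For that, expand $xy = p_1q_1^*p_2q_2^*$ using the (CK1) relation: $q_1^*p_2$ is nonzero only if one of $q_1, p_2$ is an initial subpath of the other, in which case $q_1^*p_2$ equals a single path or ghost path (or a vertex), so $xy$ is a scalar multiple of a basis element $p_1 z q_2^*$ (or zero), and symmetrically $yx$ is a scalar multiple of $p_2 z' q_1^*$. The key point is that $T_S(xy)$ is, by definition, determined by whether the "rotated-to-the-front" word $q_2^* p_1 q_1^* p_2$ reduces (via (CK1)) to an element of $\clpath(E)\setminus E^0$, and which $\sim$-class it lands in; and $T_S(yx)$ is determined the same way by $q_1^* p_2 q_2^* p_1$. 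These two words are cyclic rotations of one another, so after reduction they give closed paths of the form $xy$ and $yx$ (in the notation of the definition of $\sim$) and hence lie in the same $\sim$-class — this is precisely why $\sim$ was introduced. The one subtlety is that reduction via (CK1) can collapse edges, so I would argue that whatever real path (if any) $q_2^*p_1q_1^*p_2$ reduces to, a parallel reduction of $q_1^*p_2q_2^*p_1$ yields a cyclic rotation of it; the cases $q_1^*p_2 = 0$ and $q_2^*p_1 = 0$ force both sides to be $0$. This cyclic-invariance-up-to-$\sim$ argument is the main obstacle, and it is essentially the content of Lemma 11 in~\cite{AM} with "$=$ a vertex" replaced by "$\in$ a fixed $\sim$-class."

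For part (2), recall $N$ is generated as an ideal by the elements $y_v = v - \sum_{\{e \in E^1 \mid s(e) = v\}} ee^*$ for regular vertices $v$, so a general element of $N$ is a sum of terms $\alpha y_v \beta$ with $\alpha, \beta \in C_K(E)$. By part (1), $T_S(\alpha y_v \beta) = T_S(y_v \beta \alpha)$, so it suffices to show $T_S(y_v \gamma) = 0$ for all $\gamma \in C_K(E)$, and by linearity we may take $\gamma = pq^*$ a basis element. If $|p| > 0$ then Lemma~\ref{annihilators} gives $y_v p = 0$, hence $y_v \gamma = 0$; so assume $p \in E^0$, i.e. $\gamma = wq^*$ for some vertex $w$, which is nonzero only if $w = s(q) = r(q^*)$ — but then $y_v \gamma = \delta_{v,w}\big(q^* - \sum_e ee^*q^*\big)$, and we compute $T_S$ of this directly. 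For the term $q^*$: it contributes $\epsilon_{[q^*]}$ if $q^* \in \clpath(E)\setminus E^0$ (which, since $q^*$ is a ghost path, happens only if $q \in E^0$, impossible as $w=v$ is a vertex and $wq^*\neq 0$ forces nothing — I'd handle the vertex-vs-path casework carefully here), and the $ee^*q^*$ terms likewise; a short check shows all these $T_S$-values cancel, using that each $ee^*q^*$ reduces via (CK1) in tandem with $q^*$. The same computation with $T_{S^*}$, or alternatively the reduction $T_{S^*}(w) = T_S(w^*)$ together with $N^* = N$, finishes the $T_{S^*}$ half. The only thing to watch is that $T_S$ (unlike $T$) ignores vertices entirely, which if anything simplifies the cancellation bookkeeping relative to~\cite{AM}.
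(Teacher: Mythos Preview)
Your approach matches the paper's. For (1) the paper does exactly the reduction you describe (basis elements, then reduce $T_{S^*}$ to $T_S$ via $T_{S^*}(\mu)=T_S(\mu^*)$), and then carries out the case analysis you allude to explicitly: writing $x=pq^*$, $y=tz^*$, it splits on whether $t=qu$ or $q=tu$ and checks in each case that the closed paths produced by $xy$ and $yx$ differ by a cyclic shift, hence lie in the same $\sim$-class.

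For (2) the paper uses a small trick that makes your direct computation unnecessary. After cycling once to reduce to $T_S(y_v\gamma)$ with $\gamma=pq^*$, it cycles \emph{again}: $T_S(pq^*y_v)=T_S(q^*y_v p)$, and now Lemma~\ref{annihilators} kills this unless both $p$ and $q$ are the vertex $v$, reducing immediately to $T_S(y_v)=T_S(v)-\sum_e T_S(ee^*)=0$. This sidesteps the casework you sketch. (Incidentally, in your sketch the condition for $\gamma=wq^*$ to be a basis element is $w=r(q)$, not $w=s(q)$; with that correction your direct computation also goes through, since every term of $y_v q^*$ is a ghost path or vertex and hence lies in the kernel of $T_S$.)
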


\begin{proof}
(1) Since $T_S$ and $T_{S^*}$ are $K$-linear, it is enough to establish the claim for $x$ and $y$ that are elements of the basis for $C_K(E)$ described above. That is, we may assume that $x=pq^*$ and $y=tz^*$, for some $p,q,t,z \in \path(E)$. Further, by the above observation, it is enough to prove the statement for $T_S$, since $T_{S^*}(xy) = T_S(y^*x^*)$ and $T_S(x^*y^*) = T_{S^*}(yx)$.

Now, suppose that $T_S(pq^*tz^*) \neq 0$. Then $pq^*tz^* = ghg^*$ for some $g \in \path(E)$ and $h \in \clpath(E) \setminus E^0$. Thus, there must be some $u, v \in \path(E)$ such that either $t=qu$, $p=zv$, $h=vu$ (and $z=g$), or $q=tu$, $p=zuh$ (and $g=zu$). In the first case, we have $$T_S(pq^*tz^*) = T_S(zvq^*quz^*) = T_S(zvuz^*) = \epsilon_{[vu]}$$ $$= \epsilon_{[uv]} = T_S(quvq^*) = T_S(quz^*zvq^*) = T_S(tz^*pq^*).$$ Similarly, in the second case we have $$T_S(pq^*tz^*) = T_S(zuhu^*t^*tz^*) = T_S(zuhu^*z^*) = \epsilon_{[h]}$$ $$= T_S(tuhu^*t^*) = T_S(tz^*zuhu^*t^*) = T_S(tz^*pq^*).$$
By a similar argument, it also follows that if $T_S(tz^*pq^*) \neq 0$, then $T_S(pq^*tz^*) = T_S(tz^*pq^*)$. Hence, $T_S(pq^*tz^*) = T_S(tz^*pq^*)$ for all values of $T_S(pq^*tz^*)$ and $T_S(tz^*pq^*)$.

(2) Again, using the above observation (and the fact that $N$ is closed under the operation $()^*$), along with the $K$-linearity of $T_S$, it is sufficient to show that for any generator $$y = v-\sum_{\{e\in E^1 \mid s(e)=v\}} ee^*$$ of $N$ and any two elements $c,c'$ of  $C_K(E)$, we have $T_S(cy_ic') = 0$. But, by (1), $T_S(cy_ic') = T_S(c'cy_i)$, and hence we only need to show that $T_S(cy_i) = 0$ for any $c \in C_K(E)$. Further, since $T_S$ is $K$-linear, we may assume that $c=pq^*$ belongs to the basis for $C_K(E)$ described above. Again, by (1), we have $T_S(cy)=T_S(pq^*y)=T_S(q^*yp)$.  But, by Lemma~\ref{annihilators}, the expression $q^*yp$ is zero unless $q^* = v = p$.  So it is enough to show that $T_S(y) = 0$. But clearly, $$T_S(y) = T_S( v - \sum_{\{e\in E^1 \mid s(e)=v\}} ee^*) = T_S(v) - \sum_{\{e\in E^1 \mid s(e)=v\}} T_S(ee^*)=0,$$ concluding the proof.
\end{proof}

We are now ready for our main result about commutators in an arbitrary Leavitt path algebra.

\begin{theorem} \label{All-Comm}
Let $K$ be a field, let $E$ be a graph, and let $\mu \in L_K(E)$ be an arbitrary element. Write $$\mu = \sum_{i=1}^k a_i p_ip_i^* + \sum_{i=1}^l b_i q_it_i^* + \sum_{i=1}^m\sum_{j=1}^{m_i} c_{ij} x_{ij}y_{ij}x_{ij}^* + \sum_{i=1}^n\sum_{j=1}^{n_i} d_{ij}z_{ij}w_{ij}^*z_{ij}^*$$ for some $k,l,m,m_i,n,n_i \in \Z_+$, $a_i, b_i, c_{ij}, d_{ij} \in K$, $p_i,q_i,t_i,x_{ij},z_{ij} \in \path(E)$, and $y_{ij}, w_{ij} \in \clpath(E)\setminus E^0$, where for each $i$ and $u \in \clpath(E)$, $q_i \neq t_iu$ and $t_i \neq q_iu$, and where $y_{ij} \sim y_{i'j'}$ if and only if $i=i'$ if and only if $w_{ij} \sim w_{i'j'}$. Also, for each $v \in E^0$ let $B_v$ and $\epsilon_v$ be as in Definition~\ref{number-def}.  

Then  $\mu \in [L_K(E), L_K(E)]$ if and only if the following conditions are satisfied:
\begin{enumerate}
\item[$(1)$] $\sum_{i=1}^k a_i\epsilon_{r(p_i)} \in \mathrm{span}_K \{B_v \mid v \in E^0 \},$
\item[$(2)$] $\sum_{j=1}^{m_i} c_{ij} = 0$ for each $i \in \{1, \dots, m\}$, 
\item[$(3)$] $\sum_{j=1}^{n_i} d_{ij} = 0$ for each $i \in \{1, \dots, n\}$.
\end{enumerate}
\end{theorem}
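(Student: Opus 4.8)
The plan is to decompose the problem along the four types of terms appearing in the canonical form of $\mu$, using the preparatory Lemmas~\ref{commsums1}--\ref{commsums3} to ``peel off'' commutators, and the trace-type maps $T$, $T_S$, $T_{S^*}$ as obstructions that must vanish. First I would observe that each of the three families of summands other than the $\sum a_ip_ip_i^*$ block already lies in $[L_K(E),L_K(E)]$ as soon as the relevant scalar conditions hold: for the block $\sum_{i=1}^l b_iq_it_i^*$, condition on $q_i,t_i$ is exactly the hypothesis of Lemma~\ref{commutatorlemma}(2), so each $q_it_i^*\in[L_K(E),L_K(E)]$ unconditionally; for the blocks $\sum_j c_{ij}x_{ij}y_{ij}x_{ij}^*$ and $\sum_j d_{ij}z_{ij}w_{ij}^*z_{ij}^*$, Lemma~\ref{commsums3} reduces membership to that of $\sum_j c_{ij}y_{ij}$ (respectively $\sum_j d_{ij}w_{ij}^*$), and since all the $y_{ij}$ for fixed $i$ are $\sim$-equivalent nonconstant closed paths, writing $y_{i1}=xy$ and each $y_{ij}=y_jx_j$ appropriately, each difference $y_{ij}-y_{i1}$ is a commutator, so $\sum_j c_{ij}y_{ij}\equiv(\sum_j c_{ij})\,y_{i1}$ modulo commutators, and one shows $y_{i1}$ itself is a commutator iff the coefficient sum may be arbitrary—no, more precisely, $\sum_j c_{ij}y_{ij}\in[L_K(E),L_K(E)]$ forces $\sum_j c_{ij}=0$ by evaluating $T_S$, and conversely $\sum_j c_{ij}=0$ makes it a sum of commutators of the form $y_{ij}-y_{i1}$.

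The second step is to prove that these four blocks cannot interact: $\mu\in[L_K(E),L_K(E)]$ if and only if each of the four blocks separately lies in $[L_K(E),L_K(E)]$. Here the trace maps do the separating. Apply $T$ to the equation $\mu=\sum_j[x_j,y_j]+w$ (with $w\in N$): by Lemma~\ref{trace2}(1) the commutator part dies, only the $\sum a_ip_ip_i^*$ block contributes (the other three block-types have all basis terms $pq^*$ with $q^*p$ either $0$, not a vertex, or a nonconstant closed path, hence $T=0$ on them), and $T(w)\in\mathrm{span}_K\{B_v\}$ by Lemma~\ref{trace2}(2); this yields condition~(1) and, via Lemma~\ref{commsums1} and Theorem~\ref{LPA-comm}, pulls the $pp^*$-block out as an element of $[L_K(E),L_K(E)]$. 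Next apply $T_S$: by Lemma~\ref{T2lemma} it kills commutators and kills $N$, and it vanishes on the $pp^*$-block, on the $q_it_i^*$-block (since $t_i^*q_i$ is never a nonconstant closed path under the stated hypothesis on $q_i,t_i$), and on the $zw^*z^*$-block; what survives is $\sum_i(\sum_j c_{ij})\epsilon_{[y_{i1}]}$, and because the classes $[y_{ij}]$ are distinct for distinct $i$, each coefficient sum must be zero, giving condition~(2). Symmetrically, $T_{S^*}$ gives condition~(3). Once (1)--(3) are known, Lemmas~\ref{commsums1} and~\ref{commsums3} (together with the $\sim$-equivalence argument) rebuild each block as a genuine sum of commutators, and the ``if'' direction follows by adding them; the ``only if'' direction is exactly the trace computations just described.

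I expect the main obstacle to be the careful case-checking that the trace maps genuinely annihilate the ``wrong'' blocks—in particular verifying that for the $q_it_i^*$ terms the element $t_i^*q_i$ is never a (possibly ghost) nonconstant closed path, which relies on the normalization hypothesis $q_i\neq t_iu$, $t_i\neq q_iu$ for all $u\in\clpath(E)$; and dually that for the $x_{ij}y_{ij}x_{ij}^*$ terms the ``transpose'' reading $x_{ij}^*x_{ij}y_{ij}=y_{ij}$ lands in $\clpath(E)\setminus E^0$ but its ghost does not, so that $T_{S^*}$ (not just $T_S$) vanishes on that block, and conversely. The bookkeeping is routine once one writes out which basis monomials $pq^*$ can appear in each block and what $q^*p$ looks like, but it is the place where an argument can quietly go wrong, so I would do it explicitly. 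A secondary subtlety is making sure the $\sim$-class argument for the closed-path blocks produces \emph{commutators in $L_K(E)$} rather than merely in $C_K(E)$; this is fine because $\pi\colon C_K(E)\to L_K(E)$ is a surjective algebra homomorphism, so it carries commutators to commutators, and all the reductions in Lemmas~\ref{commsums1}--\ref{commsums3} were already stated at the level of $L_K(E)$.
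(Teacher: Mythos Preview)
Your proposal is correct and follows essentially the same route as the paper: the same reduction lemmas, the same trace maps $T$, $T_S$, $T_{S^*}$, and the same $\sim$-equivalence telescoping for the closed-path blocks. The only organizational difference is that the paper first invokes Lemma~\ref{commutatorlemma} and Lemmas~\ref{commsums2}--\ref{commsums3} to reduce to the expression $\sum_{i,j} c_{ij}y_{ij} + \sum_{i,j} d_{ij}w_{ij}^*$ and only then evaluates $T_S$ and $T_{S^*}$, so the cross-block vanishing you anticipate checking by hand is largely absorbed into those lemmas rather than carried out at the level of $\mu$ itself.
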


\begin{proof}
By Lemma~\ref{commutatorlemma}, $\sum_{i=1}^l b_i q_it_i^* \in [L_K(E), L_K(E)]$, and so Lemma~\ref{commsums2} implies that $\mu \in [L_K(E), L_K(E)]$ if and only if $$\sum_{i=1}^k a_i p_ip_i^* \in [L_K(E), L_K(E)] \text{ and } \sum_{i=1}^m\sum_{j=1}^{m_i} c_{ij} x_{ij}y_{ij}x_{ij}^* + \sum_{i=1}^n\sum_{j=1}^{n_i} d_{ij}z_{ij}w_{ij}^*z_{ij}^* \in [L_K(E), L_K(E)].$$ By Lemma~\ref{commsums1} and Theorem~\ref{LPA-comm}, $\sum_{i=1}^k a_i p_ip_i^* \in [L_K(E), L_K(E)]$ if and only if (1) holds. Hence, by Lemma~\ref{commsums3}, it is enough to show that $$\sum_{i=1}^m\sum_{j=1}^{m_i} c_{ij} y_{ij} + \sum_{i=1}^n\sum_{j=1}^{n_i} d_{ij}w_{ij}^* \in [L_K(E), L_K(E)]$$ if and only if (2) and (3) hold.

Suppose that (2) and (3) hold. We may then assume that each $m_i >1$ and each $n_i >1$. Since for each $i \in \{1, \dots, m\}$ and $j \in \{1, \dots, m_i-1\}$, $y_{ij} \sim y_{im_i}$, we can find $u_{ij}, v_{ij} \in \path(E)$ be such that $y_{ij}=u_{ij}v_{ij}$ and $y_{im_i} = v_{ij}u_{ij}$. Then, by (2), $$\sum_{i=1}^m\sum_{j=1}^{m_i} c_{ij} y_{ij} = \sum_{i=1}^m \bigg(\sum_{j=1}^{m_i-1} c_{ij} y_{ij} - \sum_{j=1}^{m_i-1} c_{ij}y_{im_i}\bigg) = \sum_{i=1}^m \sum_{j=1}^{m_i-1} c_{ij}(y_{ij} - y_{im_i})$$ $$= \sum_{i=1}^m \sum_{j=1}^{m_i-1} c_{ij}(u_{ij}v_{ij} - v_{ij}u_{ij}) \in [L_K(E), L_K(E)].$$ Similarly, for each $i \in \{1, \dots, n\}$ and $j \in \{1, \dots, n_i-1\}$, letting $g_{ij}, h_{ij} \in \path(E)$ be such that $w_{ij}=g_{ij}h_{ij}$ and $w_{in_i} = h_{ij}g_{ij}$, we have, by (3), $$\sum_{i=1}^n\sum_{j=1}^{n_i} d_{ij}w_{ij}^* = \sum_{i=1}^n \sum_{j=1}^{n_i-1} d_{ij}(h_{ij}^*g_{ij}^* - g_{ij}^*h_{ij}^*) \in [L_K(E), L_K(E)].$$ Hence, $$\sum_{i=1}^m\sum_{j=1}^{m_i} c_{ij} y_{ij} + \sum_{i=1}^n\sum_{j=1}^{n_i} d_{ij}w_{ij}^* \in [L_K(E), L_K(E)],$$ as desired.

For the converse, viewing $L_K(E)$ as $C_K(E)/N$, we shall show that if $$\sum_{i=1}^m\sum_{j=1}^{m_i} c_{ij} y_{ij} + \sum_{i=1}^n\sum_{j=1}^{n_i} d_{ij}w_{ij}^* + N \in [L_K(E), L_K(E)]$$ for some $m,m_i,n,n_i \in \Z_+$, $c_{ij}, d_{ij} \in K$, and $y_{ij}, w_{ij} \in \clpath(E)\setminus E^0$ as in the statement, then (2) and (3) hold. Write $$\sum_{i=1}^m\sum_{j=1}^{m_i} c_{ij} y_{ij} + \sum_{i=1}^n\sum_{j=1}^{n_i} d_{ij}w_{ij}^* = \sum_{i=1}^{k'}[g_i,h_i] + v$$ for some $k' \in \Z_+$, $g_i,h_i \in C_K(E)$, and $v \in N$. Then, by Lemma~\ref{T2lemma}, $$0=T_S\Big(\sum_{i=1}^m\sum_{j=1}^{m_i} c_{ij} y_{ij} + \sum_{i=1}^n\sum_{j=1}^{n_i} d_{ij}w_{ij}^*\Big) = \sum_{i=1}^m\sum_{j=1}^{m_i} c_{ij} T_S(y_{ij}) = \sum_{i=1}^m\sum_{j=1}^{m_i} c_{ij}\epsilon_{[y_{i1}]},$$ from which (2) follows. Similarly, $$0=T_{S^*}\Big(\sum_{i=1}^m\sum_{j=1}^{m_i} c_{ij} y_{ij} + \sum_{i=1}^n\sum_{j=1}^{n_i} d_{ij}w_{ij}^*\Big) = \sum_{i=1}^n\sum_{j=1}^{n_i} d_{ij} T_{S^*}(w_{ij}^*) = \sum_{i=1}^n\sum_{j=1}^{n_i} d_{ij}\epsilon_{[w_{i1}]}$$ implies (3).
\end{proof}

\begin{corollary} \label{CycleCor}
Let $K$ be a field, let $E$ be a graph, and let $\mu \in \path(E) \setminus E^0$ be a cycle. Then $\mu \notin [L_K(E), L_K(E)]$.
\end{corollary}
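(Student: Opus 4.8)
The plan is to deduce this directly from Theorem~\ref{All-Comm}. A cycle $\mu = e_1\cdots e_n$ (with $e_1,\dots,e_n \in E^1$) has length $n \geq 1$ and satisfies $s(\mu) = r(\mu)$; writing $v := s(\mu) = r(\mu)$, this means $\mu \in \clpath(E)\setminus E^0$ and $\mu = v\mu v^*$ in $L_K(E)$ (recall $v^* = v$). Thus $\mu$ already has a decomposition of the form required in Theorem~\ref{All-Comm} in which the only nonzero contribution is a single ``$x_{ij}y_{ij}x_{ij}^*$'' term: take $m = 1$, $m_1 = 1$, $c_{11} = 1$, $x_{11} = v$, and $y_{11} = \mu$, noting that $x_{11}y_{11} = v\mu = \mu \neq 0$ and $y_{11} = \mu \in \clpath(E)\setminus E^0$, as needed, and that the equivalence-class condition on the $y_{ij}$ is vacuous when only one class occurs. (It is worth checking why $\mu$ cannot instead be placed in a ``$q_it_i^*$'' slot: with $q_1 = \mu$ and $t_1 = v$ one would need $q_1 \neq t_1 u$ for all $u \in \clpath(E)$, but $t_1\mu = v\mu = \mu = q_1$ with $\mu \in \clpath(E)$ violates this. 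So $\mu$ genuinely belongs to the third family.)

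Now I would invoke Theorem~\ref{All-Comm}: among the criteria for $\mu \in [L_K(E),L_K(E)]$ is condition~(2), namely $\sum_{j=1}^{m_i} c_{ij} = 0$ for each $i$. For $i = 1$ this says $c_{11} = 0$, i.e.\ $1 = 0$ in $K$, which is impossible. Hence $\mu \notin [L_K(E),L_K(E)]$. (Conditions~(1) and~(3) — being respectively $0 \in \mathrm{span}_K\{B_v \mid v \in E^0\}$ over an empty index set, and vacuous — hold automatically, so (2) is the sole obstruction.)

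There is no genuinely hard step here; the only care needed is in matching $\mu$ to the correct family of terms in the decomposition of Theorem~\ref{All-Comm}. If one wishes to sidestep this bookkeeping entirely — in particular the minor point that the theorem is stated with all index bounds in $\Z_+$, so that one must read absent sums as $0$ (or pad with zero-coefficient terms) — one can instead argue directly in $C_K(E)$ using the trace map $T_S$ of Definition~\ref{T2definition}: if $\mu + N \in [L_K(E),L_K(E)]$, then $\mu = \sum_i [g_i,h_i] + w$ for some $g_i, h_i \in C_K(E)$ and $w \in N$, so by Lemma~\ref{T2lemma} we get $T_S(\mu) = \sum_i T_S([g_i,h_i]) + T_S(w) = 0$; on the other hand, writing $\mu = \mu\, r(\mu)^*$ we have $r(\mu)^*\mu = v\mu = \mu \in \clpath(E)\setminus E^0$, so $T_S(\mu) = \epsilon_{[\mu]} \neq 0$ — a contradiction. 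Either route works; I would present the first as the natural corollary of Theorem~\ref{All-Comm} and perhaps remark on the second.
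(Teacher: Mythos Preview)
Your proposal is correct and follows essentially the same approach as the paper: express $\mu$ in the decomposition of Theorem~\ref{All-Comm} with a single nonzero coefficient $c_{11}=1$ (all $a_i$, $b_i$, $d_{ij}$ zero), and observe that condition~(2) fails. The paper's proof is terser but identical in substance; your additional remarks (why $\mu$ cannot sit in the $q_it_i^*$ slot, and the direct $T_S$ argument) are correct but optional.
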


\begin{proof}
Using the notation of Theorem~\ref{All-Comm}, we have $a_i = 0$, $b_i = 0$, and $d_{ij} = 0$ for all $i$ and $j$. Moreover, exactly one of the $c_{ij}$ is equal to $1$, and the others are zero. Hence, condition (2) in the theorem is not satisfied, and therefore $\mu \notin [L_K(E), L_K(E)]$.
\end{proof}

\section{Commutator Rings}

As discussed in the Introduction, few examples of commutator rings are known (i.e., rings $R$ satisfying $R=[R,R]$). The goal of this section is to construct new such examples using Leavitt path algebras. The following consequence of Theorem~\ref{All-Comm} will help us identify the Leavitt path algebras having this property.

\begin{proposition} \label{comm-ring}
Let $K$ be a field, and let $E$ be a graph. Also, for each $v \in E^0$ let $B_v$ and $\epsilon_v$ be as in Definition~\ref{number-def}. Then the following are equivalent.
\begin{enumerate}
\item[$(1)$] $L_K(E) = [L_K(E),L_K(E)]$.
\item[$(2)$] $E$ is acyclic, and for all $u \in E^0$ we have $u \in [L_K(E), L_K(E)]$.
\item[$(3)$] $E$ is acyclic, and for all $u \in E^0$ we have $\epsilon_u \in \mathrm{span}_K \{B_v \mid v \in E^0 \}.$
\end{enumerate}
\end{proposition}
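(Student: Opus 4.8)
The plan is to establish the cycle of implications $(1) \Rightarrow (2) \Rightarrow (3) \Rightarrow (1)$, leaning on Theorem~\ref{All-Comm} throughout. The implication $(2) \Rightarrow (3)$ is immediate from Theorem~\ref{LPA-comm} (taking $k_u = 1$ and $k_v = 0$ for $v \neq u$): $u \in [L_K(E), L_K(E)]$ says exactly that $\epsilon_u \in \mathrm{span}_K\{B_v \mid v \in E^0\}$. Likewise $(3) \Rightarrow (2)$ would follow from the same theorem, provided we also know $E$ is acyclic; but acyclicity is carried along as part of each statement, so there is really nothing to prove there beyond quoting Theorem~\ref{LPA-comm}. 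The substantive content is in $(1) \Rightarrow (2)$ and $(3) \Rightarrow (1)$.

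For $(1) \Rightarrow (2)$: assume $L_K(E) = [L_K(E), L_K(E)]$. Then in particular every vertex $u$ lies in the commutator subspace, giving the second half of (2). For acyclicity, suppose toward a contradiction that $E$ contains a cycle; then it contains a cycle $\mu \in \path(E) \setminus E^0$ (a genuine closed path with no repeated source vertices), and Corollary~\ref{CycleCor} tells us $\mu \notin [L_K(E), L_K(E)]$, contradicting $(1)$. Hence $E$ is acyclic.

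For $(3) \Rightarrow (1)$: assume $E$ is acyclic and $\epsilon_u \in \mathrm{span}_K\{B_v \mid v \in E^0\}$ for every $u \in E^0$. We must show every $\mu \in L_K(E)$ is a sum of commutators. Write $\mu$ in the normal form of Theorem~\ref{All-Comm}. Because $E$ is acyclic, $\clpath(E) \setminus E^0 = \emptyset$, so the last two (doubly-indexed) sums in that normal form are absent; thus $\mu = \sum_{i=1}^k a_i p_i p_i^* + \sum_{i=1}^l b_i q_i t_i^*$ with the stated condition on the $q_i, t_i$. Conditions (2) and (3) of Theorem~\ref{All-Comm} are then vacuous, so $\mu \in [L_K(E), L_K(E)]$ if and only if condition (1) holds, i.e. $\sum_{i=1}^k a_i \epsilon_{r(p_i)} \in \mathrm{span}_K\{B_v \mid v \in E^0\}$. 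But by hypothesis each $\epsilon_{r(p_i)}$ individually lies in that span, hence so does any $K$-linear combination of them; condition (1) is automatically satisfied. Therefore $\mu \in [L_K(E), L_K(E)]$, and since $\mu$ was arbitrary, $L_K(E) = [L_K(E), L_K(E)]$.

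The only delicate point — and the main thing to double-check — is that the normal form of Theorem~\ref{All-Comm} genuinely applies to an \emph{arbitrary} element $\mu$ once $E$ is acyclic: one needs that every spanning element $pq^*$ with $r(p) = r(q)$ can be massaged into one of the two allowed shapes ($p_i p_i^*$, or $q_i t_i^*$ with the non-divisibility condition on $q_i, t_i$), which is routine when there are no closed paths since the obstruction to that condition is precisely the existence of a common "cyclic tail." Aside from that bookkeeping, the argument is a direct application of the already-proved results, so I do not anticipate a serious obstacle.
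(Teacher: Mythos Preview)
Your proof is correct and follows essentially the same approach as the paper: both use Corollary~\ref{CycleCor} to force acyclicity from $(1)$, then invoke Theorem~\ref{All-Comm} in the acyclic case (where the doubly-indexed sums vanish and only condition~(1) of that theorem survives) to get the equivalence with $(3)$, with Theorem~\ref{LPA-comm} handling the link to $(2)$. The only difference is organizational---you run a cycle $(1)\Rightarrow(2)\Rightarrow(3)\Rightarrow(1)$ whereas the paper proves $(1)\Leftrightarrow(3)$ and $(2)\Leftrightarrow(3)$ separately---but the content is identical, and your ``delicate point'' about the normal form is dispatched just as the paper does: when $E$ is acyclic, $\clpath(E)=E^0$, so the non-divisibility condition on $q_i,t_i$ reduces to $q_i\neq t_i$, and every monomial $pq^*$ falls into one of the two bins.
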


\begin{proof}
Corollary~\ref{CycleCor} implies that if $L_K(E)$ is a commutator ring, then $E$ must be acyclic.

Now suppose that $L_K(E)$ is acyclic. Then any element $\mu \in L_K(E)$ can be written in the form $$\mu = \sum_{i=1}^k a_i p_ip_i^* + \sum_{i=1}^l b_i q_it_i^*$$ for some $k,l \in \Z_+$, $a_i, b_i \in K$, and $p_i,q_i,t_i \in \path(E)$, where for each $i$ and $u \in \clpath(E) \ (=E^0)$, $q_i \neq t_iu$ and $t_i \neq q_iu$. Hence, by Theorem~\ref{All-Comm}, $\mu \in [L_K(E), L_K(E)]$ if and only if $\sum_{i=1}^k a_i\epsilon_{r(p_i)} \in \mathrm{span}_K \{B_v \mid v \in E^0 \}$. It follows that $L_K(E) = [L_K(E), L_K(E)]$ if and only if for all $u \in E^0$ we have $\epsilon_u \in \mathrm{span}_K \{B_v \mid v \in E^0 \},$ showing that (1) and (3) are equivalent.

The equivalence of (2) and (3) follows from Theorem~\ref{LPA-comm} (or Theorem~\ref{All-Comm}).
\end{proof}

The following lemma will be useful throughout the rest of the paper. (See Section~\ref{NotationSect} for the relevant notation.)

\begin{lemma} \label{path_sum}
Let $E$ be an acyclic graph, and let $u \in E^0$.
\begin{enumerate}
\item[$(1)$] Suppose that $E$ is finite, assume that $u$ is not a sink, and let $q_1, \dots, q_l \in \path(E)$ be all the paths such that $s(q_i) = u$ and $r(q_i)$ is a sink. Then $u = q_1q_1^*+ \dots + q_lq_l^*$.
\item[$(2)$] Suppose that $E$ has only regular vertices, let $m \in \N$, and let $q_1, \dots, q_l \in \path(E)$ be all the paths $q_i = e_1 \dots e_k$ $(e_1, \dots, e_k \in E^1)$ such that $s(q_i) = u$, $d(u,r(q_i)) = m+1$, and $s(e_2), \dots, s(e_k) \in D(u,m)$. Then $u = q_1q_1^*+ \dots + q_lq_l^*$.
\end{enumerate}
\end{lemma}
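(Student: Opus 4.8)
The plan is to prove both parts by induction, using the Cuntz–Krieger relation (CK2) repeatedly to "expand" the vertex $u$ into a sum of terms of the form $pp^*$.

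\medskip

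\noindent\textbf{Part (1).} First I would set up an induction on the number of vertices in $E$, or more cleanly, on $\max_{v} d(u,v)$ taken over vertices $v$ reachable from $u$ (this maximum is finite since $E$ is finite and acyclic, so there are no infinite paths out of $u$). The base case is when every edge out of $u$ goes directly to a sink — but more fundamentally, the key mechanism is this: since $u$ is not a sink and $E$ is finite, $u$ is a regular vertex, so by (CK2) we have $u = \sum_{\{e \in E^1 \mid s(e) = u\}} ee^*$. For each such edge $e$, if $r(e)$ is a sink, the term $ee^*$ is already of the desired form (with $q_i = e$, a path ending at a sink). If $r(e)$ is not a sink, then $r(e)$ is again regular, so $r(e) = \sum_{\{f \mid s(f) = r(e)\}} ff^*$, and since $e = e\,r(e)$ and $e^* = r(e)\,e^*$ (relations (E1), (E2)), we get $ee^* = e\,r(e)\,e^* = \sum_f (ef)(ef)^*$. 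Iterating this expansion, and using that acyclicity + finiteness guarantees the process terminates (every maximal path out of $u$ eventually hits a sink, and there are finitely many such paths), we obtain $u = \sum_i q_iq_i^*$ where the $q_i$ range exactly over paths from $u$ to a sink. I would phrase the induction so that the inductive hypothesis is applied to the (finite, acyclic) "sub-situation" rooted at each $r(e)$ — or alternatively just argue directly that repeated substitution terminates and yields precisely the claimed paths, with no repetitions and no omissions, because the CK2 expansions at each stage partition the paths according to their first edge, then second edge, etc.

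\medskip

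\noindent\textbf{Part (2).} Here the graph need not be finite, but every vertex is regular, so (CK2) is available at every vertex. I would induct on $m$. For $m = 0$: the paths $q_i$ are exactly the single edges $e_1$ with $s(e_1) = u$ (the condition on $s(e_2), \dots, s(e_k)$ is vacuous, and $d(u, r(e_1)) = 1$ forces length $1$); since $u$ is regular, (CK2) gives $u = \sum_{s(e)=u} ee^* = \sum_i q_iq_i^*$ directly. For the inductive step, assume $u = q_1q_1^* + \dots + q_lq_l^*$ where the $q_i$ are the paths of the type described for $m$. Each $q_i = e_1 \dots e_k$ ends at $r(q_i)$ with $d(u, r(q_i)) = m+1$; this vertex is regular, so $r(q_i) = \sum_{\{f \mid s(f) = r(q_i)\}} ff^*$, and hence $q_iq_i^* = q_i\, r(q_i)\, q_i^* = \sum_f (q_if)(q_if)^*$. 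Substituting into $u = \sum_i q_iq_i^*$ yields $u = \sum_i \sum_f (q_if)(q_if)^*$, where $q_if$ now has the property that its "old" last vertex $s(f) = r(q_i)$ satisfies $d(u, s(f)) = m+1 \leq m+1$, i.e.\ $s(f) \in D(u, m+1)$, and its new last vertex $r(f)$ satisfies $d(u, r(f)) \leq m+2$. The one subtle point: I must check that the paths $q_if$ appearing are \emph{exactly} the paths of the type described for $m+1$ — each $q_if$ has $d(u, r(q_if)) \leq m+2$, but the required type needs $d(u, r(q_if)) = m+2$. This is where the argument needs care, and it is what I expect to be the main obstacle.

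\medskip

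\noindent\textbf{The main obstacle and how to handle it.} The delicate issue in Part (2) is that after appending an edge $f$ to $q_i$, we might have $d(u, r(f)) < m+2$ (a "shortcut" back to a closer vertex), so $q_if$ would not literally be one of the target paths $q_1', \dots, q_{l'}'$ for level $m+1$. The fix is to observe that such a "too-short" path $q_if$ can be re-expanded: since $r(f)$ is regular, write $r(f) = \sum g g^*$ and continue appending edges. One shows that this re-expansion process, applied to any term whose range is not yet at distance exactly $m+2$, terminates — because at each step we append an edge whose source lies in $D(u, m+1)$, and we are essentially running the Part-(1)-style expansion \emph{within the finite acyclic subgraph induced on $D(u,m+1)$} together with one extra "exit" edge; acyclicity forces termination. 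The cleanest write-up may instead be a direct double induction, or to prove a slightly stronger statement by induction on $m$: namely that $u = \sum q_iq_i^*$ summed over \emph{all} paths $q_i = e_1\cdots e_k$ with $s(q_i)=u$, $s(e_2),\dots,s(e_k) \in D(u,m)$, and $r(q_i) \notin D(u,m)$ — and then separately check that the condition "$r(q_i)\notin D(u,m)$ and $s(e_j)\in D(u,m)$ for $j\geq 2$" is equivalent to "$d(u,r(q_i))=m+1$ and $s(e_j)\in D(u,m)$", which follows because each step out of $D(u,m)$ increases the distance by exactly one (a path from $u$ through vertices in $D(u,m)$ reaching a vertex at distance $m+1$ cannot have taken a detour). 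I would present Part (2) via this reformulation, so that the CK2 expansions cleanly match up at each level and the equivalence of the two descriptions of the path set is handled as a short separate graph-theoretic lemma-within-the-proof.
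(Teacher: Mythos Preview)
Your approach to Part~(1) is essentially the paper's: repeated CK2 expansion, packaged as an induction (the paper inducts on the total length $|q_1|+\cdots+|q_l|$ rather than on the maximal distance, but the mechanism is identical).

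For Part~(2), your proposal is correct but takes a longer route than necessary. You set up an induction on $m$, discover that a single CK2 expansion does not move cleanly from level $m$ to level $m+1$ (because appending an edge may land back inside $D(u,m+1)$), and then propose to repair this either by iterated re-expansion or by a strengthened induction hypothesis. Both fixes work, but each amounts to re-running the Part~(1) argument inside the inductive step. The paper bypasses all of this: it simply defines the finite acyclic graph $F$ with $F^0 = D(u,m+1)$ and $F^1 = \{e \in E^1 : s(e) \in D(u,m)\}$, observes that the sinks of $F$ are exactly the vertices at distance $m+1$ from $u$, and that the paths from $u$ to sinks in $F$ are exactly the $q_i$ in the statement. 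Part~(1) applied to $F$ then gives $u = \sum q_iq_i^*$ in $L_K(F)$, and since the CK2 relations at vertices of $D(u,m)$ are the same in $F$ as in $E$, the identity holds in $L_K(E)$ as well. So there is no separate induction on $m$ and no obstacle to manage; you already had this idea in your parenthetical remark about ``running the Part-(1)-style expansion within the finite acyclic subgraph,'' and that is the whole proof.
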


\begin{proof}
(1) We proceed by induction on $\sigma = |q_1| + \dots + |q_l|$. If $\sigma = 1$, then $l = 1$, and $q_1 \in E^1$ is the only edge that has source $u$. Thus $u = q_1q_1^*$, by the (CK2) relation (Definition~\ref{LPAdefinition}). Let us therefore assume that the statement holds for all $u \in E$ such that $\sigma = |q_1| + \dots + |q_l| \leq a$ ($a \geq 1$), and prove it for $u$ having $\sigma=a+1$. 

Let $v_1, \dots, v_n \in E^0$ be all the vertices such that $d(u, v_i)=1$, where $v_1, \dots, v_{m-1}$ are sinks, and $v_m, \dots, v_n$ are not sinks (for some $1\leq m \leq n$). Also, for each $i \in \{1, \dots, n\}$ let $e_{i,1}, \dots, e_{i,n_i} \in E^1$ be all the edges having source $u$ and range $v_i$, and for each $i \in \{m, \dots, n\}$ let $p_{i,1}, \dots, p_{i,m_i} \in \path(E)$ be all the paths having source $v_i$ and range that is a sink. We have $$\sum_{i=1}^l q_iq_i^* = \sum_{i=1}^{m-1} \sum_{j=1}^{n_i} e_{i,j}e_{i,j}^* + \sum_{i=m}^n \sum_{j=1}^{n_i} \sum_{k=1}^{m_i} e_{i,j}p_{i,k}p_{i,k}^*e_{i,j}^*.$$

Since $E$ is acyclic, for each $i \in \{m, \dots, n\}$ we have $$|p_{i,1}| + \dots + |p_{i,m_i}| < |q_1| + \dots + |q_l| = a+1,$$ and thus, by the inductive hypothesis, $v_i =  \sum_{k=1}^{m_i} p_{i,k}p_{i,k}^*.$ Therefore, $$\sum_{i=1}^l q_iq_i^* = \sum_{i=1}^{m-1} \sum_{j=1}^{n_i} e_{i,j}e_{i,j}^* + \sum_{i=m}^n \sum_{j=1}^{n_i} e_{i,j}v_ie_{i,j}^*= \sum_{i=1}^{m-1} \sum_{j=1}^{n_i} e_{i,j}e_{i,j}^* + \sum_{i=m}^n \sum_{j=1}^{n_i} e_{i,j}e_{i,j}^*,$$ which is $u$, by the (CK2) relation. (Since $E$ is acyclic, these are all the edges with source $u$.) Thus, $u = q_1q_1^*+ \dots + q_lq_l^*$ in $L_K(E)$.

(2) Let $F$ be the finite graph obtained from $E$ by setting $F^0 = D(u,m+1)$ and letting $F^1$ consist of all the edges $e \in E^1$ such that $s(e) \in D(u,m)$. Then, since $E$ has only regular vertices, the $q_i$ will be precisely all the paths in $F$ having source $u$ and range that is a sink. By (1), $u = q_1q_1^*+ \dots + q_lq_l^*$, as an expression in $L_K(F)$. It follows that the equality also holds as an expression in $L_K(E)$.
\end{proof}

We are now ready to give a sufficient condition (which will also turn out to be necessary) for a Leavitt path algebra to be a commutator ring.

\begin{proposition} \label{p_mult}
Let $K$ be a field having characteristic $p > 0$, and let $E$ be an acyclic graph having only regular vertices. Suppose that for each $u \in E^0$ there is an $m_u \in \N$ such that for all $w \in E^0$ satisfying $d(u,w) = m_u+1$, the number of paths $q=e_1\dots e_k \in \path(E)$ $(e_1, \dots, e_k \in E^1)$ such that $s(q) = u$, $r(q) = w$, and $s(e_2), \dots, s(e_k) \in D(u,m_u)$ is a multiple of $p$. Then $L_K(E) = [L_K(E), L_K(E)]$.
\end{proposition}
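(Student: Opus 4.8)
The plan is to use the criterion of Proposition~\ref{comm-ring}: since $E$ is already assumed acyclic, it suffices to show that $\epsilon_u \in \mathrm{span}_K\{B_v \mid v \in E^0\}$ for each $u \in E^0$, or equivalently (by Theorem~\ref{LPA-comm}) that $u \in [L_K(E), L_K(E)]$ for each $u \in E^0$. I would fix $u \in E^0$ and the associated $m_u \in \N$ given by hypothesis, and apply Lemma~\ref{path_sum}(2) with $m = m_u$: this yields $u = q_1q_1^* + \dots + q_lq_l^*$ in $L_K(E)$, where $q_1, \dots, q_l$ are precisely all paths $q_i = e_1 \dots e_k$ with $s(q_i) = u$, $d(u, r(q_i)) = m_u + 1$, and $s(e_2), \dots, s(e_k) \in D(u, m_u)$. (Note that since $E$ has only regular vertices and is acyclic, $u$ cannot be a sink, so this list is nonempty and well-defined.)

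Next I would group the paths $q_i$ according to their range. For each vertex $w$ with $d(u,w) = m_u + 1$, the hypothesis says that the number of $q_i$ with $r(q_i) = w$ is a multiple of $p$; call it $p \cdot c_w$ for some $c_w \in \N$. Applying Lemma~\ref{commsums1} to the expression $u = \sum_{i=1}^l q_i q_i^*$, we get that $u \in [L_K(E), L_K(E)]$ if and only if $\sum_{i=1}^l r(q_i) \in [L_K(E), L_K(E)]$. But $\sum_{i=1}^l r(q_i) = \sum_{w : d(u,w) = m_u+1} (p \cdot c_w) w = p \cdot \sum_w c_w w$, and in characteristic $p$ this element is $0$, which certainly lies in $[L_K(E), L_K(E)]$. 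Hence $u \in [L_K(E), L_K(E)]$, as required.

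I do not expect any serious obstacle here: the real content has already been packaged into Lemma~\ref{path_sum}(2) (which rewrites a vertex as a sum of $pp^*$ terms along paths of a controlled shape) and Lemma~\ref{commsums1} (which trades such a sum for the corresponding sum of ranges). The only points needing a little care are checking that the hypotheses of Lemma~\ref{path_sum}(2) are met — $E$ acyclic with only regular vertices is given, and $m_u$ is supplied by assumption — and observing that the combinatorial hypothesis of the proposition is exactly the statement that, after the substitution, each range vertex appears with multiplicity divisible by $p$, so the whole linear combination collapses to zero over a field of characteristic $p$. One should also remark that if some $u$ happens to satisfy that $D(u, m_u + 1) \setminus D(u, m_u)$ is empty (i.e. there are no vertices at distance exactly $m_u + 1$), then $u$ would be forced to be a sink, contradicting the regular-vertex hypothesis unless $E^0$ is a single point, a degenerate case one can dispatch directly; otherwise the argument goes through verbatim.
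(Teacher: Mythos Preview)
Your proof is correct and follows essentially the same route as the paper's. The only cosmetic difference is that where you invoke Lemma~\ref{commsums1} to pass from $\sum_i q_iq_i^*$ to $\sum_i r(q_i)$, the paper performs the underlying commutator computation $\sum_{i}[q_i,q_i^*] = \sum_i q_iq_i^* - \sum_i r(q_i)$ directly; the content is identical. (Your closing remark about the degenerate case is slightly off: if no vertex lay at distance exactly $m_u+1$, the finite set $D(u,m_u)$ would be closed under edges, forcing a cycle rather than forcing $u$ to be a sink---but either way the acyclicity hypothesis rules it out, so the main argument is unaffected.)
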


\begin{proof}
By Proposition~\ref{comm-ring}, it is enough to show that $u \in [L_K(E), L_K(E)]$, for each $u \in E^0$. Thus, let $u \in E^0$ be any vertex, and let $m_u \in \N$ be as in the statement. Let $v_1, \dots, v_n \in E^0$ be all the vertices such that $d(u,v_i) = m_u+1$. (We note that there is a finite number of such vertices, by our assumptions on $E$.) Also, for each $i \in \{1, \dots, n\}$ let $q_{i,1}, \dots, q_{i,n_i} \in \path(E)$ be all the paths $q=e_1\dots e_k \in \path(E)$ $(e_1, \dots, e_k \in E^1)$ such that $s(q) = u$, $r(q) = v_i$, and $s(e_2), \dots, s(e_k) \in D(u,m_u)$. Then $$\sum_{i=1}^n\sum_{j=1}^{n_i}[q_{i,j},q_{i,j}^*] =  \sum_{i=1}^n\sum_{j=1}^{n_i}q_{i,j}q_{i,j}^* - \sum_{i=1}^n\sum_{j=1}^{n_i}q_{i,j}^*q_{i,j} = \sum_{i=1}^n\sum_{j=1}^{n_i}q_{i,j}q_{i,j}^* - \sum_{i=1}^n n_iv_i = \sum_{i=1}^n\sum_{j=1}^{n_i}q_{i,j}q_{i,j}^*,$$ since, by hypothesis, each $n_i$ is a multiple of $p$. But, by Lemma~\ref{path_sum}(2), the resulting sum equals $u$, showing that $u \in [L_K(E), L_K(E)]$.
\end{proof}

It is easy to construct graphs $E$ satisfying the conditions of Proposition~\ref{p_mult}; below are three examples, where for each $u \in E^0$ we take $m_u = 0, 1$, and $2$, respectively. We also take $n_1, n _2, \dots$ to be an arbitrary sequence of positive integers, and use $(n)$ above an arrow to mean that there are $n$ edges connecting the relevant vertices.

$$\xymatrix{
& & & \\ 
& & {\bullet} \ar@{.}[ur] \ar@{.}[dr] & \\
& {\bullet} \ar[ur]^{(n_3p)} \ar[dr]_{(n_4p)} & & \\
{\bullet} \ar[ur]^{(n_1p)} \ar[dr]_{(n_2p)} & & {\bullet} \ar@{.}[ur] \ar@{.}[dr]& \\
& {\bullet} \ar[ur]^{(n_5p)} \ar[dr]_{(n_6p)} & &\\
& & {\bullet} \ar@{.}[ur] \ar@{.}[dr] & \\
& & & \\ 
 }$$
 
$$\xymatrix{\ar@{.}[r] & \ar [r] ^{(n_5p)} & {\bullet} \ar [r] ^{(n_3)} & {\bullet} \ar [r] ^{(n_1p)} & {\bullet} \ar [r] ^{(n_2)} & {\bullet} \ar [r] ^{(n_4p)} & \ar@{.}[r] & }$$

$$\xymatrix{
{\bullet} \ar [r] \ar [d] & {\bullet} \ar [r] \ar [d] & {\bullet} \ar [d]^{(p)} & & & \\
{\bullet} \ar [ur] & {\bullet} \ar [r]^{(p)} & {\bullet} \ar [r] \ar [d]  & {\bullet} \ar [r]  \ar [d] & {\bullet} \ar [d]^{(p)} & \\
& & {\bullet} \ar [ur]  & {\bullet} \ar [r]^{(p)} & {\bullet} \ar@{.}[r] & \\
}$$

We note that the commutator rings resulting from the above class of graphs are fundamentally different from all the examples discussed in~\cite{ZM}. (All of the latter are unital, among other things.) Moreover, using the graphs above one can clearly obtain infinitely many non-isomorphic commutator Leavitt path algebras $L_K(E)$ by varying the characteristic of $K$. Similarly, one can construct non-isomorphic commutator Leavitt path algebras $L_K(E)$ by varying the cardinality of $E^0$. For instance, let $\kappa$ be an arbitrary infinite cardinal, and let $E$ be the graph pictured below, where $E^0 = \{v_i\}_{i=1}^\infty \cup \{u_i\}_{i \in \kappa}$.

$$\xymatrix{
{\bullet}^{u_1} \ar[ddr]^{(p)} & & & & &\\
{\bullet}^{u_2} \ar[dr]_{(p)} \ar@{.}[d] & & & & &\\
{\bullet}^{u_i} \ar[r]_{(p)} \ar@{.}[d] & {\bullet}^{v_1} \ar [r] ^{(p)} & {\bullet}^{v_2} \ar [r] ^{(p)} & {\bullet}^{v_3} \ar [r] ^{(p)} & \ar@{.}[r] & \\
 & & & & &\\
}$$
Then $E^0$ has cardinality $\kappa$, and by Proposition~\ref{p_mult} (with $m_u = 0$ for each $u \in E^0$), $L_K(E)$ is a commutator ring, whenever $K$ is a field having characteristic $p$. 

Additionally, for a fixed field $K$ of characteristic $p > 0$ one can construct an infinite class of graphs $E_n$ ($n \in \Z_+$), all of the same cardinality, such that the resulting Leavitt path algebras $L_K(E_n)$ are pairwise non-isomorphic commutator rings. We shall give such a construction in the next section, in Proposition~\ref{perfectLPA}.

The rest of this section is devoted to showing that all the commutator Leavitt path algebras are of the form given in Proposition~\ref{p_mult}. We begin be recalling a result due to Abrams, Aranda Pino, and Siles Molina.

\begin{proposition}[Proposition 3.5 in \cite{AAS}] \label{fdLPA}
Let $K$ be a field, let $E$ be a finite acyclic graph, and let $v_1, \dots, v_t \in E^0$ be the sinks in $E$. Then $L_K(E)$ is isomorphic to a direct sum of full matrix rings over $K$. More specifically, $$L_K(E) \cong \bigoplus_{i=1}^t \M_{n(v_i)}(K),$$ where $n(v_i)$ is the cardinality of the set $\, \{p \in \path(E) \mid r(p) = v_i\}$.
\end{proposition}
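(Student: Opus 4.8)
The plan is to prove the isomorphism by exhibiting an explicit representation of $L_K(E)$ whose image is exactly $\bigoplus_{i=1}^t \M_{n(v_i)}(K)$. First I would record two easy consequences of $E$ being finite and acyclic: the set $\path(E)$ is finite, so $C_K(E)$, and hence its quotient $L_K(E)$, is a finite-dimensional $K$-algebra, and for each sink $v_i$ the set $P_i = \{p \in \path(E) \mid r(p) = v_i\}$ is finite, of size $n(v_i)$. Moreover, since $E$ is finite and acyclic, for every $w \in E^0$ the quantity $\delta(w) = \max\{|p| \mid p \in \path(E),\ s(p) = w\}$ is a well-defined natural number, and $\delta(w) = 0$ precisely when $w$ is a sink; also $\delta(r(e)) < \delta(w)$ whenever $s(e) = w$. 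This integer will drive the induction in the next step.

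Next I would show that the elements $pq^*$ with $p, q \in P_i$ for some $i$ span $L_K(E)$ as a $K$-vector space. We already know the larger set $\{pq^* \mid p, q \in \path(E),\ r(p) = r(q)\}$ spans $L_K(E)$, so it suffices to express an arbitrary $pq^*$ with common range $w = r(p) = r(q)$ in terms of the claimed generators, by induction on $\delta(w)$. If $\delta(w) = 0$ then $w$ is a sink and $pq^*$ is already of the desired form. If $\delta(w) > 0$ then $w$ is a regular vertex (finite out-degree, not a sink), so the (CK2) relation gives $pq^* = p\, w\, q^* = \sum_{\{e \in E^1 \mid s(e) = w\}} (pe)(qe)^*$, and each $pe$, $qe$ has common range $r(e)$ with $\delta(r(e)) < \delta(w)$, so the inductive hypothesis applies.

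The core of the argument is the construction of a $K$-algebra homomorphism $\rho \colon L_K(E) \to \bigoplus_{i=1}^t \End_K(V_i)$, where $V_i$ is the $K$-vector space with basis $\{e_p \mid p \in P_i\}$, so that $\End_K(V_i) \cong \M_{n(v_i)}(K)$. Using the universal property of $L_K(E)$ from Definition~\ref{LPAdefinition}, I would define the action of the generators on each $V_i$ by: $v \cdot e_p = e_p$ if $s(p) = v$ and $0$ otherwise; for $f \in E^1$, $f \cdot e_p = e_{fp}$ if $s(p) = r(f)$ (in which case $fp \in P_i$) and $0$ otherwise; and $f^* \cdot e_p = e_{p'}$ if $p = fp'$ for some $p' \in \path(E)$ and $0$ otherwise. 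Then I would check that the relations (V), (E1), (E2), (CK1), (CK2) hold. The only verification needing care is (CK2): for a regular vertex $v$ and $p \in P_i$, the operator $\sum_{\{f \mid s(f)=v\}} ff^*$ sends $e_p$ to $e_p$ when $s(p) = v$, since then $|p| \geq 1$ (as $v$ is not a sink) and there is a unique $f$ with $p = fp'$, and sends $e_p$ to $0$ otherwise, which is exactly the action of $v$. A short computation then shows that, for $p, q \in P_i$, the operator $\rho(pq^*)$ acts on $V_i$ as the standard matrix unit $e_q \mapsto e_p$ and acts as $0$ on $V_j$ for $j \neq i$; in particular $\rho$ is surjective onto $\bigoplus_{i=1}^t \End_K(V_i)$, its image containing all matrix units of each block.

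Finally I would combine the two halves: by the spanning step $L_K(E)$ is spanned by the elements $pq^*$ with $p, q \in P_i$, and $\rho$ carries these to the matrix units $e_q \mapsto e_p$, which form a $K$-basis of $\bigoplus_{i=1}^t \End_K(V_i)$; a linear map carrying a spanning set to a linearly independent set is injective, so $\rho$ is an isomorphism $L_K(E) \cong \bigoplus_{i=1}^t \M_{n(v_i)}(K)$. I expect the main obstacle to be purely organizational: setting up the distance function $\delta$ so that the (CK2) induction in the spanning step terminates, and handling the sink/non-sink dichotomy carefully when verifying (CK2) for $\rho$ — this is the one place where acyclicity and finiteness are genuinely used, via the fact that a path ending at a sink has positive length as soon as its source is a non-sink.
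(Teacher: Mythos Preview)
Your argument is correct: the spanning reduction via the (CK2) relation and induction on $\delta(w)$ is sound, the verification that the generators act on $\bigoplus_i V_i$ so as to satisfy (V), (E1), (E2), (CK1), (CK2) is routine and you have identified the one delicate point (that a path in $P_i$ starting at a non-sink has positive length), and the conclusion that a linear map taking a spanning set onto a linearly independent set is an isomorphism is standard.

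There is nothing in the paper to compare against: the proposition is quoted from \cite{AAS} and no proof is given here. For what it is worth, your approach is essentially the standard one used in the Leavitt path algebra literature for this result, and it matches the spirit of the original argument in \cite{AAS}, which also proceeds by exhibiting explicit matrix units $pq^*$ indexed by pairs of paths ending at a common sink.
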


We note, that in the proof of the above result, each sink $v_i \in E^0$ is mapped by the displayed isomorphism to a matrix in $\M_{n(v_i)}(K)$ having one $1$ somewhere on the main diagonal and zeros elsewhere. More generally, given a vertex $v \in E^0$, the projection of the image of $v$, under the isomorphism above, onto any direct summand is a matrix whose only nonzero entries are $1$s appearing somewhere on the main diagonal.

\begin{lemma} \label{sink-comm}
Let $K$ be a field, let $E$ be a finite acyclic graph, and let $u \in E^0$. If either $u$ is a sink or $K$ has characteristic $\, 0$, then $u \notin [L_K(E), L_K(E)]$.
\end{lemma}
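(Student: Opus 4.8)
The plan is to reduce to the finite-dimensional case handled by Proposition~\ref{fdLPA}, and then invoke the Albert/Muckenhoupt--Shoda trace criterion for matrix rings. First I would pass from $E$ to a suitable finite subgraph. Since $u \in E^0$ and $E$ is finite, $E$ is already finite, so this step is immediate; but I still want to restrict attention to the ``downstream'' part of $u$, i.e. the hereditary saturated closure of $u$, so that $u$ sits inside a finite acyclic graph whose Leavitt path algebra is a direct sum of full matrix rings. More precisely, let $F$ be the restriction of $E$ to the vertices $v$ with $d(u,v) < \infty$ together with the edges emanating from them; then $L_K(F)$ embeds in $L_K(E)$ compatibly (the inclusion sends $[L_K(F),L_K(F)]$ into $[L_K(E),L_K(E)]$), but more usefully, $u \in [L_K(E),L_K(E)]$ would force $u \in [L_K(F),L_K(F)]$ via the $T$-type trace obstructions from Theorem~\ref{All-Comm} (conditions (1)--(3) are ``local'' to the paths out of $u$). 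So it suffices to prove the statement with $E = F$, i.e. I may assume every vertex of $E$ lies downstream of $u$.

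Next I would apply Proposition~\ref{fdLPA}: with $v_1,\dots,v_t$ the sinks of $E$, we have $L_K(E) \cong \bigoplus_{i=1}^t \M_{n(v_i)}(K)$, and commutator subspaces respect direct sums, so $[L_K(E),L_K(E)] \cong \bigoplus_{i=1}^t [\M_{n(v_i)}(K),\M_{n(v_i)}(K)]$. By the classical Albert/Muckenhoupt/Shoda theorem, $[\M_n(K),\M_n(K)]$ is exactly the trace-zero matrices. Therefore $u \in [L_K(E),L_K(E)]$ if and only if the image of $u$ in each summand $\M_{n(v_i)}(K)$ has trace zero. Now I use the remark following Proposition~\ref{fdLPA}: the projection of the image of $u$ onto the $i$-th summand is a diagonal $0/1$ matrix, so its trace is a nonnegative integer $c_i$ (the image of $c_i \cdot 1_K$ in $K$), and in fact $c_i$ counts the number of paths $p$ from $u$ to $v_i$ (each such path contributes one diagonal $1$, since $p = p\,r(p)$ and the isomorphism is built from the basis $\{pq^*\}$). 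For the two cases: if $K$ has characteristic $0$, then some $c_i \neq 0$ in $K$ because $u$ has at least one path to some sink (as every vertex downstream of $u$, including $u$, eventually reaches a sink in a finite acyclic graph) — in fact $c_i$ equals the genuine integer count, which is positive — so $u \notin [L_K(E),L_K(E)]$. If $u$ is a sink, then $u$ itself is one of the $v_j$, and the only path from $u$ to $u$ is the trivial one while there are no paths from $u$ to any other sink, so the image of $u$ is the elementary idempotent $E_{11}$ in the $j$-th summand and zero elsewhere; its trace in that summand is $1 \neq 0$ in $K$ regardless of characteristic, so again $u \notin [L_K(E),L_K(E)]$.

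The main obstacle, and the step I would be most careful about, is the reduction at the start: justifying that $u \in [L_K(E),L_K(E)]$ really does pull back to the subgraph $F$ (equivalently, that the extra paths $p p^*$, $q t^*$ with source not equal to $u$, or paths that leave the downstream region, cannot help express $u$ as a sum of commutators). The cleanest way to handle this is probably not to change the graph at all, but to argue directly from Theorem~\ref{All-Comm}: write $u$ in the normal form of that theorem — since $E$ is acyclic, $u = u\cdot u = u u^*$ is already of the first type with $p_1 = u$, so the theorem says $u \in [L_K(E),L_K(E)]$ iff $\epsilon_u \in \mathrm{span}_K\{B_v \mid v \in E^0\}$. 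Then I must show $\epsilon_u$ is not in this span when $u$ is a sink (trivially, since $B_u = 0$ and $\epsilon_u$ has its only nonzero entry in a coordinate where every other $B_v$ — being $B_v = (a_{vw})_w - \epsilon_v$ — can only contribute via the $a_{vu}$ term, forcing an awkward integer-combination argument) or when $\mathrm{char}\,K = 0$. For the sink case: any element of the span, evaluated at the $u$-coordinate, is $\sum_v \lambda_v a_{vu}$ (the $-\epsilon_v$ terms don't touch coordinate $u$ since $u$ is a sink, hence regular only vacuously — actually $u$ is a sink so $B_u=0$, and for $v\neq u$ the $\epsilon_v$ term is zero in coordinate $u$), so I need this to be forced to vanish; but summing all coordinates of $\sum_v\lambda_v B_v$ gives $\sum_v \lambda_v(\sum_w a_{vw} - 1)$, and comparing with the all-ones functional on $\epsilon_u$ gives $1 = \sum_v \lambda_v(\mathrm{outdeg}(v) - 1)$ — this is the $\mathrm{char}\,0$ obstruction, and for sinks one localizes it. I expect the characteristic-zero argument to go through by applying the ``sum of all coordinates'' linear functional $\Sigma : K^{(E^0)} \to K$: it kills every $B_v$ with $v$ regular precisely when $\mathrm{outdeg}(v) = 1$, and in general $\Sigma(B_v) = \mathrm{outdeg}(v) - 1 \geq 0$; but a single application isn't enough, so instead I would iterate Lemma~\ref{path_sum}(1) as in Proposition~\ref{fdLPA}'s proof to reduce $\epsilon_u$ to a sum $\sum_i \epsilon_{v_i}$ over sinks with positive integer multiplicities, and observe that $\Sigma$ of any element of $\mathrm{span}_K\{B_v\}$ supported on sinks-plus-interior must respect these counts — concretely, $\Sigma$ applied to a hypothetical expression $\epsilon_u = \sum \lambda_v B_v$ yields $1 = \sum \lambda_v(\mathrm{outdeg}(v)-1)$, which has no solution forcing trouble, so I will instead track the sink-coordinates directly. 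Given the length of the paper's own proofs of analogous facts, I would ultimately present this via the matrix-ring route of Proposition~\ref{fdLPA} plus the Albert--Muckenhoupt--Shoda theorem, as that packages the combinatorics most transparently; the subgraph reduction I would phrase as: pass to the graph $E_{\leq u}$ on $\{v : d(u,v)<\infty\}$, note $u\in[L_K(E),L_K(E)]$ implies $u\in[L_K(E_{\leq u}),L_K(E_{\leq u})]$ because condition (1) of Theorem~\ref{All-Comm} for $u$ only involves $B_v$'s and $\epsilon$'s supported on the hereditary closure of $u$, and then finish in the finite matrix-ring setting.
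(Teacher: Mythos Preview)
Your second paragraph is exactly the paper's proof: apply Proposition~\ref{fdLPA} to identify $L_K(E)$ with $\bigoplus_i \M_{n(v_i)}(K)$, use the remark that each vertex maps to a $0/1$ diagonal matrix in every summand, and invoke the trace criterion. If $u$ is a sink $v_j$, its image in the $j$-th summand has trace $1$; if $\mathrm{char}\,K = 0$, then since $E$ is finite acyclic there is a path from $u$ to some sink $v_j$, so the image of $u$ in that summand has positive-integer trace. That is the whole argument, and it is correct.

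The subgraph reduction in your first and third paragraphs is unnecessary: $E$ is already finite by hypothesis, so Proposition~\ref{fdLPA} applies directly, and sinks not reachable from $u$ simply receive the zero matrix (trace zero) and cause no trouble. Worse, your justification for the reduction contains an error: you claim that condition~(1) of Theorem~\ref{All-Comm} for $\epsilon_u$ ``only involves $B_v$'s and $\epsilon$'s supported on the hereditary closure of $u$,'' but this is false. A vertex $w$ upstream of $u$ with an edge into $u$ has $B_w$ with a nonzero entry in the $u$-coordinate, so such $B_w$'s can legitimately participate in expressing $\epsilon_u$ as an element of $\mathrm{span}_K\{B_v\}$. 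Fortunately this flaw is in a step you do not need; drop the reduction entirely and your paragraph~2 stands on its own as the complete proof.
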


\begin{proof}
It is well known that in a full matrix ring over a field only matrices having trace zero can be expressed as sums of commutators (e.g., see~\cite[Corollary 17]{ZM}). The result now follows from the above comment.
\end{proof}

\begin{lemma}\label{comm_ring_lemma1}
Suppose that $K$ is a field and $E$ is an acyclic graph such that $L_K(E)$ is a commutator ring. Then the following hold.
\begin{enumerate}
\item[$(1)$] The characteristic of $K$ is not $\, 0$.
\item[$(2)$] $E^0$ contains only regular vertices.
\item[$(3)$] $E^0$ is infinite.
\end{enumerate}
\end{lemma}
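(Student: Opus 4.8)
The three statements should all follow from Proposition~\ref{comm-ring} together with the structure theory for finite acyclic graphs (Proposition~\ref{fdLPA}) and Lemma~\ref{sink-comm}, by a localization-to-a-finite-subgraph argument. The key idea is that membership of a vertex $u$ in $[L_K(E), L_K(E)]$ can be "seen" inside a finite subgraph. Concretely, if $u \in [L_K(E), L_K(E)]$, then $u = \sum_j [g_j, h_j]$ for finitely many $g_j, h_j \in L_K(E)$, and each $g_j, h_j$ is a finite $K$-linear combination of elements $pq^*$ with $p, q \in \path(E)$. Only finitely many vertices and edges of $E$ appear among the sources and ranges of these paths; let $F \subseteq E$ be a finite subgraph containing all of them together with $u$. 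One must be slightly careful that $F$ is a genuine Leavitt path subalgebra situation — the cleanest route is to invoke Lemma~\ref{path_sum}-style reasoning or the restriction results available for Leavitt path algebras — but the upshot I want is: there is a \emph{finite} acyclic graph $F$ with $u \in F^0$ and $u \in [L_K(F), L_K(F)]$.

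\textbf{Part (1).} Suppose toward a contradiction that $\mathrm{char}(K) = 0$. Since $L_K(E)$ is a commutator ring and $E$ is acyclic, Proposition~\ref{comm-ring} gives $u \in [L_K(E), L_K(E)]$ for every $u \in E^0$. Fix any $u \in E^0$ (the graph is nonempty, else $L_K(E) = 0$ is not usually considered, or the statement is vacuous — I would note $E^0 \neq \emptyset$). By the localization argument above, $u \in [L_K(F), L_K(F)]$ for some finite acyclic graph $F$; but $F$ is finite acyclic and $\mathrm{char}(K) = 0$, so Lemma~\ref{sink-comm} says $u \notin [L_K(F), L_K(F)]$, a contradiction. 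Hence $\mathrm{char}(K) = p > 0$.

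\textbf{Part (2).} Suppose $E^0$ contains a vertex $u$ that is not regular, i.e., $u$ is either a sink or has infinite out-degree. If $u$ is a sink, apply the same localization: $u \in [L_K(F), L_K(F)]$ for some finite acyclic $F$ in which $u$ is still a sink (enlarging $F$ does not add edges out of $u$), contradicting Lemma~\ref{sink-comm}. The case where $u$ has infinite out-degree is the main obstacle, since $u$ is then genuinely not handled by the finite-subgraph picture in the same way. Here I would instead argue directly from Theorem~\ref{LPA-comm}: $u \in [L_K(E), L_K(E)]$ forces $\epsilon_u \in \mathrm{span}_K\{B_v \mid v \in E^0\}$, so $\epsilon_u = \sum_{v} \lambda_v B_v$ for finitely many nonzero $\lambda_v$. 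Examining the $u$-coordinate and the coordinates of the (infinitely many) ranges of edges out of $u$: if $u$ is not regular then $B_u = 0$, so $u$ itself does not appear among the $B_v$'s with nonzero contribution to the $u$-coordinate except via other vertices $v$ having an edge into $u$; but each such $B_v$ has finite support while $u$'s out-neighborhood contributes infinitely many coordinates that would need to cancel — carefully chasing the coordinates of $B_v$ for the finitely many $v$ in the support yields the contradiction (there is a coordinate, namely the range of some edge out of $u$ not hit by any $B_v$ in the sum, where the equation fails, or the $u$-coordinate forces a contradiction because $B_u=0$). I would spell out: since $B_u = 0$ and each $B_v$ ($v \neq u$) has $u$-coordinate equal to $a_{vu}$ or $0$, the $u$-coordinate of $\sum_v \lambda_v B_v$ equals $\sum_{v \neq u} \lambda_v a_{vu}$, which cannot equal $1$ while simultaneously every other coordinate vanishes — and the infinitely many nonzero coordinates of $\epsilon_u$'s... actually $\epsilon_u$ has only one nonzero coordinate, so the real contradiction is that $\sum \lambda_v B_v$ has support contained in the union of supports of finitely many $B_v$, each finite, and the equation $\sum_v \lambda_v a_{vu} = 1$ together with vanishing at all $v$-coordinates for $v$ a sink or range-of-edge-from-$u$... this needs the acyclicity and finiteness of each $\mathrm{supp}(B_v)$; I expect the clean statement is that no such finite combination can produce $\epsilon_u$ when $B_u = 0$ unless there is cancellation that acyclicity forbids. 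This coordinate bookkeeping is where I would spend the most care.

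\textbf{Part (3).} If $E^0$ were finite, then by Parts (1) and (2) we may as well use: $E$ is finite and acyclic, so Proposition~\ref{fdLPA} gives $L_K(E) \cong \bigoplus_{i=1}^t \M_{n(v_i)}(K)$, and in such a ring the identity (or any sink $v_i$) is not a sum of commutators (Lemma~\ref{sink-comm}, or directly the trace obstruction), contradicting $L_K(E) = [L_K(E), L_K(E)]$ via Proposition~\ref{comm-ring}. Alternatively and more simply: a finite acyclic graph has a sink $u$ (follow any maximal path), and by Lemma~\ref{sink-comm} $u \notin [L_K(E), L_K(E)]$, contradicting Proposition~\ref{comm-ring}(2). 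Either way $E^0$ must be infinite. I would present Part (3) via the sink argument since it is the shortest, and I would reuse the finite-subgraph localization lemma as a named intermediate step so it services Parts (1)–(3) uniformly; isolating that lemma cleanly — ensuring the passage from $E$ to the finite $F$ really does preserve "$u \in$ the commutator subspace" — is the common technical point underlying the whole proof.
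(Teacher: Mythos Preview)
Your plan for parts (1), (3), and the sink case of (2) matches the paper's proof essentially verbatim: localize the equation $u = \sum_j [\mu_j,\nu_j]$ to a finite acyclic subgraph $G$ and invoke Lemma~\ref{sink-comm}. The paper simply asserts the localization step without further comment, so your caution there is well-placed but not strictly necessary.

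Where you diverge is the infinite-emitter case of (2). The paper does \emph{not} attempt a direct coordinate analysis. Instead it observes that if $F$ is the graph obtained from $E$ by deleting every edge whose source has infinite out-degree, then $\{B_v \mid v\in E^0\} = \{B_v \mid v\in F^0\}$ (since $B_v = 0$ for non-regular $v$ in either graph, and regular vertices are unaffected). By Proposition~\ref{comm-ring} this means $L_K(E)$ is a commutator ring iff $L_K(F)$ is; but every infinite emitter of $E$ is a sink of $F$, so the already-proved sink case gives the contradiction. This reduces the infinite-emitter case to the sink case in one line.

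Your direct approach via Theorem~\ref{LPA-comm} \emph{can} be made to work, but the reasoning you sketch goes astray: $\epsilon_u$ has only one nonzero coordinate, so there is no issue of ``infinitely many coordinates needing to cancel.'' The correct contradiction is this: write $\epsilon_u = \sum_{v\in S} \lambda_v B_v$ with $S$ finite and each $v\in S$ regular (since $B_v=0$ otherwise), so $u\notin S$. By acyclicity and finiteness of $S$, some $w\in S$ has no edge coming from any other vertex of $S$; the $w$-coordinate of the equation then reads $0 = -\lambda_w$, contradicting $w\in S$. You never isolate this step, and your discussion of the $u$-coordinate and ``ranges of edges out of $u$'' is a red herring. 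So the gap in your plan is precisely the missing ``pick a source vertex in the support'' idea; the paper's $F$-trick sidesteps the need for it entirely.
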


\begin{proof}
Let $F$ be the graph obtained from $E$ by removing all the edges whose source does not have finite out-degree. (So we turn each vertex with infinite out-degree in $E$ into a sink in $F$.) Then, by Definition~\ref{number-def}, $\{B_v \mid v \in E^0\} =  \{B_v \mid v \in F^0\}$. Hence, by Proposition~\ref{comm-ring}, $L_K(E)$ is a commutator ring if and only if $L_K(F)$ is a commutator ring.

Now, suppose that $u \in E^0$, and write $u = \sum_{i=1}^n [\mu_i,\nu_i] \in [L_K(E), L_K(E)]$ for some $\mu_i, \nu_i \in L_K(E)$. Since the terms of this sum involve only finitely many vertices and edges of $E$, there must be some finite subgraph $G$ of $E$, such that $u = \sum_{i=1}^n [\mu_i,\nu_i] \in [L_K(G), L_K(G)]$ for some $\mu_i, \nu_i \in L_K(G)$. Thus, by Lemma~\ref{sink-comm}, it cannot be the case that $u$ is a sink or that $K$ has characteristic $0$ (in particular, proving (1)). It follows that $E^0$ cannot contain any sinks, and hence, by the previous paragraph, $E^0$ cannot contain any vertices having infinite out-degree either, concluding the proof of (2).

Clearly no acyclic graph without sinks can be finite, showing that (3) holds.
\end{proof}

\begin{lemma} \label{comm_ring_lemma2}
Let $E$ be an acyclic graph having only regular vertices, $p$ a prime number, $K$ a field having characteristic $p$, and $u \in E^0$. Suppose that $u \in [L_K(E), L_K(E)]$. Then $\epsilon_u = \sum_{i=0}^n a_i B_{v_i} \ (\mathrm{mod} \ p)$ for some $a_i \in \Z$ and $v_i \in E^0$ which satisfy the following conditions:
\begin{enumerate}
\item[$(1)$] there is an $m \in \N$ such that $\, \{v_1, \dots, v_n\} = D(u,m);$
\item[$(2)$] $v_0 = u$ and $a_0 = -1;$
\item[$(3)$] for all $\, 0 \leq i < j \leq n$, we have $d(v_j,v_i) = \infty;$
\item[$(4)$] for all $\, 1 \leq i \leq n$, $-a_i$ is the number of paths $q=e_1\dots e_k \in \path(E)$ $(e_1, \dots, e_k \in E^1)$ such that $s(q) = u$, $r(q) = v_i,$ and $s(e_2), \dots, s(e_k) \in \{v_1, \dots, v_n\}.$
\end{enumerate}
Moreover, if $w \in E^0$ is a vertex such that $d(u,w) = m+1$, then the number of paths $q=e_1\dots e_k \in \path(E)$ $(e_1, \dots, e_k \in E^1)$ such that $s(q) = u$, $r(q) = w$, and $s(e_2), \dots, s(e_k) \in D(u,m)$ is a multiple of $p$.
\end{lemma}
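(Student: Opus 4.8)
The plan is to extract from the hypothesis $u \in [L_K(E), L_K(E)]$ a concrete expression as a sum of commutators, reduce it to a finite subgraph, and then interpret the resulting equation over $K$ via the trace map $T$ from Definition~\ref{TDefinition} together with the structure theory of Proposition~\ref{fdLPA}. First I would write $u = \sum_i [\mu_i, \nu_i]$ with $\mu_i, \nu_i \in L_K(E)$, and observe that all the paths and ghost paths appearing in the $\mu_i, \nu_i$ involve only finitely many vertices and edges; since $E$ is acyclic with only regular vertices, I can choose $m \in \N$ large enough that $D(u,m)$ contains all the sources of those edges, and then the finite subgraph $F$ with $F^0 = D(u,m+1)$ and $F^1$ all edges of $E$ with source in $D(u,m)$ satisfies $u \in [L_K(F), L_K(F)]$ (this is essentially the localization argument already used in the proof of Lemma~\ref{comm_ring_lemma1}, invoking Lemma~\ref{path_sum}(2) to see that $u$ as an element of $L_K(F)$ genuinely captures $u \in L_K(E)$).

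Next I would apply Theorem~\ref{LPA-comm} inside $L_K(F)$: since $u = \epsilon_u$ as a linear combination of vertices lies in $[L_K(F), L_K(F)]$, we get $\epsilon_u \in \mathrm{span}_K\{B_v \mid v \in F^0\}$, i.e. $\epsilon_u = \sum_{v \in F^0} k_v B_v$ for some scalars $k_v \in K$. Because $K$ has characteristic $p$ and the $B_v$ have integer entries, I can lift the $k_v$ to integers $a_v$, and reduce mod $p$. The remaining work is to massage this relation into the specific triangular form asserted in (1)--(4). Here I would order $F^0$ by "distance from $u$": list $v_0 = u$ first, then the vertices at distance $1$, then distance $2$, and so on up to distance $m$, noting acyclicity forces $d(v_j, v_i) = \infty$ whenever $v_j$ comes strictly after $v_i$ in this ordering, which is (3). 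The coefficient bookkeeping in (2) and (4) should then follow by reading off, coordinate by coordinate, what $\sum a_v B_v = \epsilon_u$ says: looking at the $u$-coordinate gives $a_0 = -1$ (the $-\epsilon_v$ term in $B_v$), and looking at the $v_i$-coordinate for $i \geq 1$ expresses $-a_i$ recursively as a count of edges, which unwinds by induction on $d(u, v_i)$ into the stated count of paths $q = e_1 \dots e_k$ with all intermediate sources in $D(u,m)$ — this is exactly the combinatorial identity implicit in Lemma~\ref{path_sum}(2), now tracked at the level of the integer matrix rather than inside $L_K(E)$.

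For the "Moreover" clause: once I have $\epsilon_u \equiv \sum_{i=0}^n a_i B_{v_i} \pmod p$ with $\{v_1, \dots, v_n\} = D(u,m)$, I take the $w$-coordinate of both sides for a vertex $w$ with $d(u,w) = m+1$. The left side contributes $0$ (since $w \neq u$). On the right side, $B_{v_0} = B_u$ contributes $a_u$ times the number of edges from $u$ to $w$, and each $B_{v_i}$ ($i \geq 1$) contributes $a_i$ times the number of edges from $v_i$ to $w$; crucially $w \notin D(u,m)$ so the $-\epsilon_{v_i}$ terms never hit the $w$-coordinate. Substituting the path-count interpretation of the $a_i$ from (2) and (4), the $w$-coordinate equation becomes precisely: (number of paths $q = e_1 \dots e_k$ from $u$ to $w$ with $s(e_2), \dots, s(e_k) \in D(u,m)$) $\equiv 0 \pmod p$, since such a path is obtained by first reaching some $v_i \in D(u,m)$ by a path with intermediate sources in $D(u,m)$ and then taking one final edge $v_i \to w$ (this decomposition is unique because $d(u,w) = m+1$ forces the last edge to leave $D(u,m)$ exactly once). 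That is the claim.

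The main obstacle I expect is the careful combinatorial translation in the middle step — verifying that the coordinate-wise reading of $\sum a_v B_v = \epsilon_u$ really does produce the path counts in (4) rather than mere edge counts, and that the ordering by distance legitimately yields the triangularity (3) without hidden back-edges. This requires a clean induction on $d(u, v_i)$ and a precise handling of the fact that $B_v$ mixes an adjacency row with a $-\epsilon_v$ correction, so the "diagonal" contributions and "off-diagonal" contributions must be disentangled in the right order. Everything else (the reduction to a finite subgraph, the lift to $\Z$, the trace argument) is routine given the lemmas already established.
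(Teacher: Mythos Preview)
Your overall approach matches the paper's closely: obtain $\epsilon_u$ as a $K$-linear (hence $\Z/p\Z$-linear) combination of the $B_v$ via Theorem~\ref{LPA-comm}, restrict attention to $v \in D(u,m)$, arrange the vertices so the resulting system is triangular, solve recursively for the $a_i$ as path counts, and then read off the $w$-coordinate for the ``Moreover'' clause. Your final paragraph is exactly how the paper handles that last part.

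There is, however, a genuine error in your triangularization step. Ordering $v_0, v_1, \ldots, v_n$ by $d(u,\,\cdot\,)$ does \emph{not} force condition~(3). In the acyclic graph with edges $u \to a$, $u \to b$, $b \to c$, $c \to a$ one has $d(u,a) = 1 < 2 = d(u,c)$, so $a$ would be listed before $c$, yet $d(c,a) = 1 \neq \infty$; the entry of $B_c$ in the $a$-column is then nonzero and sits below the diagonal, and your induction for~(4) breaks. The correct move --- and what the paper does --- is to pick any topological ordering of the finite acyclic set $D(u,m)$ with $u$ first; acyclicity guarantees one exists, and then $b_{ij} = 0$ for $i > j$ holds by construction. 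With that single change your coordinate-by-coordinate reading recovers~(2) and~(4) exactly as you describe.

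A smaller gap: your reduction to the subgraph $F$ with $F^0 = D(u,m+1)$ presumes the witnesses $\mu_i, \nu_i$ can be chosen to involve only vertices reachable from $u$, which is not automatic --- nothing prevents the given expression from using some $v$ with $d(u,v) = \infty$. The paper avoids this by applying Theorem~\ref{LPA-comm} directly in $L_K(E)$ and then pruning: if some $v_j$ in the resulting combination has $d(u,v_j) = \infty$, choose such a $v_j$ with $d(w,v_j) = \infty$ for all other $w$ in the support, and project onto the $v_j$-coordinate to see $a_j \equiv 0 \pmod p$, so it may be dropped. After pruning (and then padding with coefficients $p$ to fill out $D(u,m)$) you land exactly where your argument resumes.
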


\begin{proof}
By Theorem~\ref{LPA-comm} (or Theorem~\ref{All-Comm}), $u \in [L_K(E), L_K(E)]$ implies that $\epsilon_u = \sum_{i=0}^n a_i B_{v_i}$ for some $a_i \in K$ and $v_i \in E^0$. But, since the entries in each $B_{v_i}$ and in $\epsilon_u$ are integers, we may assume that each $a_i$ is in the prime subfield of $K$ (see~\cite[Remark 34]{AM}), or, equivalently, that each $a_i$ is an integer, and $\epsilon_u = \sum_{i=0}^n a_i B_{v_i}$ holds modulo $p$.

Suppose that for some $v_j$ we have $d(u, v_j) = \infty$. Upon replacing $v_j$ with a different vertex having the same property from $\{v_0, \dots, v_n\}$, if necessary, we may assume that for all $w \in \{u, v_0, \dots, v_n\} \setminus \{v_j\}$ we have $d(w, v_j) = \infty$ (since $E$ is acyclic). Let $\pi_{v_j} : \Z^{(E^0)} \rightarrow \Z$ be the natural projection onto the coordinate indexed by $v_j$. Then $\pi_{v_j}(\sum_{i=0}^n a_i B_{v_i}) = -a_j$, by the definition of $B_{v_i}$, which can only happen if $a_j = 0 \ (\mathrm{mod} \ p)$, since $\epsilon_u = \sum_{i=0}^n a_i B_{v_i}$ and $v_j \neq u$. Thus, we may remove any such $v_j$ and assume that for all $w \in \{v_0, \dots, v_n\}$ we have $d(u,w) \in \N$.

Letting $m\in \N$ be the largest value of $d(u, v_i)$ ($i \in \{0, \dots, n\}$), we see that $\{v_0, \dots, v_n\} \subseteq D(u,m)$. Since $E$ has only regular vertices, by adding vertices $v_i$ to the set $\{v_0, \dots, v_n\}$, if necessary,  and setting $a_i = p$, we may assume that $\{v_0, \dots, v_n\} = D(u,m)$. We note that since $E$ is acyclic, it must be the case that for all $v_j \in \{v_0, \dots, v_n\}\setminus \{u\}$ we have $d(v_j, u) = \infty$.

Next, let us reindex $v_1, \dots, v_n$ (and $a_0, \dots, a_n$ correspondingly) in any way so that $v_0 = u$ and condition (3) above is satisfied. This must be possible, since $E$ is acyclic, and since for all $v_j \in \{v_0, \dots, v_n\}\setminus \{u\}$ we have $d(v_j, u) = \infty$. It remains to show that $a_0 = -1$ and condition (4) is satisfied.

Since for each $0\leq i \leq n$, $B_{v_i}$ has only finitely many nonzero entries, upon projecting onto finitely many coordinates, we may identify $B_{v_i}$ with $(b_{i0}, \dots, b_{il})$, for some $l \in \N$ ($l \geq n$) and $b_{ij} \in \N$, where the first $n+1$ coordinates correspond to $v_0, \dots,  v_n$ when $B_{v_i}$ is viewed as an element of $\Z^{(E^0)}$. By the definition of $B_{v_i}$, we have $b_{ii} = -1$ for each $i$. Also, by condition (3), we have $b_{ij} = 0$ whenever $i > j$. Thus, \\ $B_{v_0} = (-1, b_{01}, b_{02},\dots\dots\dots\dots, b_{0l})$\\ $B_{v_1} = (\phantom{-}0,-1, b_{12}, \dots\dots\dots\dots, b_{1l})$\\ $\vdots$\\ $B_{v_n} = (\phantom{-}0, \dots, 0, -1, b_{n,n+1}, \dots, b_{nl})$.\\ In particular, we must have $a_0 = -1$. Let us now prove that (4) can be assumed to hold, by induction on $i$. We have $$0= \pi_{v_1}(\sum_{i=0}^n a_i B_{v_i}) = -1 \cdot a_1 + b_{01} \cdot a_0 = - a_1 - b_{01} \ (\mathrm{mod} \ p),$$ and hence replacing $a_i$ if necessary, we may assume that $-a_1 = b_{01}$ (as integers), which, by the definition of $B_{v_0}$, is the number of edges $e \in E^1$ such that $s(e) = v_0 = u$ and $r(e) = v_1$. But, by (1) and (3), the only paths from $u$ to $v_1$ that go through the vertices $v_1, \dots, v_n$ are edges, proving the claim for $a_1$. Let us therefore assume that (4) holds for $1 \leq j < n$, and prove it for $a_{j+1}$. Now $$0= \pi_{v_{j+1}}(\sum_{i=0}^n a_i B_{v_i}) = -1 \cdot a_{j+1} + \sum_{i=0}^j a_i b_{i,j+1} \ (\mathrm{mod} \ p),$$ and hence we may assume that $-a_{j+1} = \sum_{i=0}^j (-a_i) b_{i,j+1}$ (as integers). Since for each $0 \leq i \leq j$, $b_{i,j+1}$ is the number of edges $e \in E^1$ such that $s(e) = v_i$ and $r(e) = v_{j+1}$, the inductive hypothesis and (3) imply that $-a_{j+1}$ is the number of paths $q \in \path(E)$ that go only through $\{v_0, \dots, v_n\}$, such that $s(q) = u$ and $r(q) = v_{j+1}$, as desired.

For the final claim, let $w \in E^0$ is a vertex such that $d(u,w) = m+1$. Then there must be an edge $e \in E^1$ such that $r(e) = w$ and $s(e) = v_j$ for some $j \in \{0, \dots, n\}$. Hence, there must be some $n< k \leq l$ such that for all $i \in \{0, \dots, n\}$, $b_{ik}$ is the number of edges having source $v_i$ and range $w$ (where $B_{v_i} = (b_{i1}, \dots, b_{il})$). Letting $\pi_k : \Z^{(E^0)} \rightarrow \Z$ be the natural projection onto the coordinate corresponding to $k$, we have $$0= -\pi_k(\sum_{i=0}^n a_i B_{v_i}) = \sum_{i=0}^n (-a_i) b_{ik} \ (\mathrm{mod} \ p).$$ By (4), the last sum is the number of paths $q \in \path(E)$ that go only through $\{v_0, \dots, v_n\}$, such that $s(q) = u$ and $r(q) = w$. By (1), this sum is the desired number.
\end{proof}

We can now prove our main result about commutator Leavitt path algebras.

\begin{theorem} \label{PerfectLPAs}
Let $K$ be a field, and let $E$ be a graph. Then $L_K(E) = [L_K(E), L_K(E)]$ if and only if the following conditions hold.
\begin{enumerate}
\item[$(1)$] $E$ is acyclic.
\item[$(2)$] $E^0$ contains only regular vertices.
\item[$(3)$] The characteristic $p$ of $K$ is not $\, 0$.
\item[$(4)$] For each $u \in E^0$ there is an $m \in \N$ such that for all $w \in E^0$ satisfying $d(u,w) = m+1$, the number of paths $q=e_1\dots e_k \in \path(E)$ $(e_1, \dots, e_k \in E^1)$ such that $s(q) = u$, $r(q) = w$, and $s(e_2), \dots, s(e_k) \in D(u,m)$ is a multiple of $p$.
\end{enumerate}
\end{theorem}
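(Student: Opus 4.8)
The plan is to prove the two directions separately, drawing almost entirely on results already assembled in the excerpt. For the ``if'' direction, suppose conditions (1)--(4) hold. Condition (4) is precisely the hypothesis of Proposition~\ref{p_mult} (with $m_u$ the $m$ produced for each $u$), and conditions (1)--(3) are the remaining hypotheses of that proposition: $E$ acyclic, only regular vertices, and $\mathrm{char}(K) = p > 0$. Hence Proposition~\ref{p_mult} immediately yields $L_K(E) = [L_K(E), L_K(E)]$. So this direction is a one-line invocation.

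For the ``only if'' direction, assume $L_K(E) = [L_K(E), L_K(E)]$. Corollary~\ref{CycleCor} (or the first line of Proposition~\ref{comm-ring}) forces $E$ to be acyclic, giving (1). Then Lemma~\ref{comm_ring_lemma1} applies: it gives that $\mathrm{char}(K) \neq 0$ (part (1) of that lemma, hence condition (3) here), that $E^0$ contains only regular vertices (part (2), hence condition (2) here), and incidentally that $E^0$ is infinite. With (1), (2), (3) in hand, we know $E$ is an acyclic graph with only regular vertices over a field of characteristic $p > 0$. By Proposition~\ref{comm-ring}, $u \in [L_K(E), L_K(E)]$ for every $u \in E^0$. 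Now fix an arbitrary $u \in E^0$ and apply Lemma~\ref{comm_ring_lemma2}: its ``moreover'' clause states exactly that there is an $m \in \N$ such that for every $w \in E^0$ with $d(u,w) = m+1$, the number of paths $q = e_1 \cdots e_k$ with $s(q) = u$, $r(q) = w$, and $s(e_2), \dots, s(e_k) \in D(u,m)$ is a multiple of $p$. That is precisely condition (4) for this $u$. Since $u$ was arbitrary, condition (4) holds throughout, completing the proof.

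The substantive content has all been packed into the preceding lemmas, so at the level of this theorem there is no real obstacle: the argument is a matter of checking that the hypotheses of Proposition~\ref{p_mult}, Lemma~\ref{comm_ring_lemma1}, Proposition~\ref{comm-ring}, and Lemma~\ref{comm_ring_lemma2} line up with conditions (1)--(4). The only mild care needed is bookkeeping around the quantifier on $m$ in condition (4): it is ``for each $u$ there exists $m$,'' so one must apply Lemma~\ref{comm_ring_lemma2} once per vertex $u$ rather than expecting a uniform $m$, and conversely in the ``if'' direction one feeds the per-vertex $m$ into Proposition~\ref{p_mult} as the $m_u$. A secondary point worth a sentence is why we may reduce to the regular-vertex, acyclic setting before invoking Lemma~\ref{comm_ring_lemma2} at all --- that is handled by Lemma~\ref{comm_ring_lemma1} parts (1) and (2) together with Corollary~\ref{CycleCor}, and the reduction passing between $E$ and the subgraph $F$ of Lemma~\ref{comm_ring_lemma1}'s proof is already internal to that lemma, so nothing further is required here.
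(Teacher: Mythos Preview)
Your proposal is correct and follows exactly the paper's own approach: the ``if'' direction is Proposition~\ref{p_mult}, and the ``only if'' direction chains Proposition~\ref{comm-ring} (for acyclicity and to place each vertex in the commutator subspace), Lemma~\ref{comm_ring_lemma1} (for conditions (2) and (3)), and the ``moreover'' clause of Lemma~\ref{comm_ring_lemma2} (for condition (4)). Your added remarks on the per-vertex quantifier for $m$ and on the order in which the hypotheses of Lemma~\ref{comm_ring_lemma2} are secured are accurate and match the logical dependencies in the paper.
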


\begin{proof}
By Proposition~\ref{p_mult}, if conditions (1) through (4) hold, then $L_K(E) = [L_K(E), L_K(E)]$. The converse follows from Proposition~\ref{comm-ring}, Lemma~\ref{comm_ring_lemma1}, and Lemma~\ref{comm_ring_lemma2}.  
\end{proof}

\section{Connections Between Ideals and Lie Ideals} \label{LieIdealSect}

The goal of this section is to show that the commutator Leavitt path algebras (described in Theorem~\ref{PerfectLPAs}) actually have the additional property that all their Lie ideals coincide with the ideals they generate. We begin with two lemmas.

\begin{lemma} \label{LieIdealLemma1}
Let $K$ be a field, let $E$ be an acyclic graph, let $\mu \in L_K(E)$ be an arbitrary nonzero element, and let $U \subseteq L_K(E)$ be the Lie ideal generated by $\mu$. Write $\mu = \sum_{i=1}^n a_ip_iq_i^*$ for some $a_i \in K \setminus \{0\}$ and $p_i, q_i \in \path(E)$ $($where $r(p_i)=r(q_i))$. Then the following hold.
\begin{enumerate}
\item[$(1)$] Let $l \in \{1, \dots, n\}$ be such that $q_l$ has maximal length among the $q_i$ satisfying $s(q_i) = s(q_l)$. Then for all $t \in \path(E) \setminus E^0$ such that $s(t) = r(p_l) = r(q_l)$ and $r(t) \neq s(p_i)$ for all $i \in  \{1, \dots, n\}$, we have $t \in U$.
\item[$(2)$] Let $l \in \{1, \dots, n\}$ be such that $p_l$ has maximal length among the $p_i$ satisfying $s(p_i) = s(p_l)$. Then for all $t \in \path(E) \setminus E^0$ such that $s(t) = r(p_l) = r(q_l)$ and $r(t) \neq s(q_i)$ for all $i \in  \{1, \dots, n\}$, we have $t^* \in U$.
\end{enumerate}
\end{lemma}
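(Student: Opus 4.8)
The plan is to deduce (2) from (1) using the involution, and to prove (1) by hitting $\mu$ with two commutators that collapse it to a nonzero scalar multiple of $t$.

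For the reduction, recall that $x\mapsto x^{*}$ is a $K$-linear anti-automorphism of $L_{K}(E)$ with $(pq^{*})^{*}=qp^{*}$. Since $[x,y]^{*}=[y^{*},x^{*}]=-[x^{*},y^{*}]$, the involution sends Lie ideals to Lie ideals, and it carries $U$ onto the Lie ideal generated by $\mu^{*}=\sum_{i=1}^{n}a_{i}q_{i}p_{i}^{*}$. In this expansion the roles of the $p_{i}$ and the $q_{i}$ are interchanged, so applying (1) to $\mu^{*}$ shows that $t$ lies in the Lie ideal generated by $\mu^{*}$ whenever $p_{l}$ has maximal length among the $p_{i}$ with $s(p_{i})=s(p_{l})$, $s(t)=r(q_{l})=r(p_{l})$, and $r(t)\neq s(q_{i})$ for all $i$; taking $*$-images then gives $t^{*}\in U$, which is (2).

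For (1), write $v=s(q_{l})$ and $w=r(p_{l})=r(q_{l})$, and form first
$$\nu:=[\mu,q_{l}t]=\mu q_{l}t-q_{l}t\mu .$$
Because $r(t)\neq s(p_{i})$ we have $tp_{i}=0$ for every $i$, so $q_{l}t\mu=0$; and in $\mu q_{l}t=\sum_{i}a_{i}p_{i}(q_{i}^{*}q_{l})t$ the factor $q_{i}^{*}q_{l}$ vanishes unless $q_{i}$ and $q_{l}$ lie along a common path, while the maximality of $|q_{l}|$ among the $q_{i}$ with source $v$ forbids $q_{l}$ from being a proper initial segment of any $q_{i}$. Hence only the indices in $P=\{\,i:q_{i}\text{ is an initial segment of }q_{l}\,\}$ (a set containing $l$) contribute, and writing $q_{l}=q_{i}z_{i}$ we obtain $\nu=\sum_{i\in P}a_{i}\,p_{i}z_{i}t\in U$. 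Form next $[\nu,p_{l}^{*}]=\nu p_{l}^{*}-p_{l}^{*}\nu$. Since $t$ has positive length and starts at $w=s(p_{l}^{*})$ while $E$ is acyclic, $r(t)\neq w$, so $tp_{l}^{*}=0$ and hence $\nu p_{l}^{*}=0$; therefore $p_{l}^{*}\nu=-[\nu,p_{l}^{*}]\in U$. Now $p_{l}^{*}\nu=\sum_{i\in P}a_{i}(p_{l}^{*}p_{i})z_{i}t$, whose $i=l$ term is $a_{l}t$. One checks that every other term vanishes: if $p_{i}$ and $p_{l}$ have different sources or diverge then $p_{l}^{*}p_{i}=0$ already, and otherwise one of $p_{i},p_{l}$ is an initial segment of the other, whereupon the relation $r(p_{i})=r(q_{i})$ and the absence of positive-length closed paths in $E$ force $(p_{l}^{*}p_{i})z_{i}t$ to telescope to $0$. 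Consequently $p_{l}^{*}\nu=a_{l}t$, and since $a_{l}\neq 0$ and $U$ is a $K$-subspace, $t\in U$.

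The main obstacle is precisely this last verification, that every cross term $(p_{l}^{*}p_{i})z_{i}t$ with $i\neq l$ is zero: it is a case analysis tracking how the ghost path $p_{l}^{*}$, the suffix $z_{i}$ of $q_{l}$, and $t$ cancel against one another, and in each surviving case one exhibits a closed path of positive length and invokes acyclicity (along with the maximality of $q_{l}$ and the precise form of the chosen expression for $\mu$). The rest is routine computation with the defining relations (V), (E1), (E2), (CK1), and (CK2).
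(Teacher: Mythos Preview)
Your reduction of (2) to (1) via the involution $()^*$ is correct and is a tidy shortcut; the paper instead repeats the argument for (2) by analogy with (1).

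However, your proof of (1) contains a real gap at exactly the point you flag as ``the main obstacle.'' The assertion that every cross term $(p_l^*p_i)z_it$ with $i\in P\setminus\{l\}$ vanishes is false. Working through your own case analysis: if $p_l=p_iy$ with $y\in\path(E)\setminus E^0$, then $p_l^*p_i=y^*$, and $y^*z_i$ need not be zero. Indeed $y$ and $z_i$ are both paths from $r(p_i)$ to $w$, and when $z_i=y$ (equivalently, when both $p_l=p_iz_i$ and $q_l=q_iz_i$) one gets $y^*z_it=wt=t$, not $0$. Thus $p_l^*\nu=ct$ with $c=\sum_{\{i\in P\,:\,p_iz_i=p_l\}}a_i$, not $a_l$. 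This coefficient $c$ can vanish even though $\nu\neq 0$. For a concrete failure, take distinct edges $e,h:u\to v$, distinct edges $g,g':v\to w$, and $f:w\to x$; set
\[
\mu=a_1\,ee^*-a_1\,eg(eg)^*+a_3\,he^*\qquad(a_1,a_3\neq 0).
\]
Here $q_1=q_3=e$, $q_2=eg$, so $l=2$ and $P=\{1,2,3\}$ with $z_1=z_3=g$, $z_2=w$. One computes $\nu=[\mu,egf]=(a_1-a_1)\,egf+a_3\,hgf=a_3\,hgf$, while $p_l^*\nu=(eg)^*\,a_3\,hgf=a_3\,g^*e^*hgf=0$. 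Your argument produces $0$, yet $\mu\neq 0$ in $L_K(E)$ (using CK2 at $v$, $\mu=a_1\,eg'(eg')^*+a_3\,he^*$) and $f\in U$ must be shown.

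The paper handles precisely this by \emph{not} committing to $p_l^*$ in advance. After the first commutator it rewrites $\nu=\sum_{j=1}^m b_j u_j t$ with the $u_j$ distinct and each $b_j\neq 0$, then selects $u_k$ of maximal length among the $u_j$ and commutes with $u_k^*$. Maximality of $|u_k|$ together with acyclicity forces $u_k^*u_j=0$ for $j\neq k$, so the second commutator yields $b_kt$ with $b_k\neq 0$. In the example above this picks $u_k=hg$ and returns $a_3f$. Your approach can be repaired by making the same adaptive choice; the fixed choice $p_l$ does not suffice.
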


\begin{proof}
(1) Note that $$[\mu,q_lt] = \sum_{i=1}^n a_ip_iq_i^*(q_lt) - \sum_{i=1}^n a_i(q_lt)p_iq_i^* =\sum_{i=1}^n a_ip_iq_i^*q_lt - 0 =  \sum_{i=1}^m b_iu_it,$$ for some $b_i \in K \setminus \{0\}$ and distinct $u_i \in \path(E)$ satisfying $r(u_i)=s(t)$. It could not be the case that $r(u_i) = r(t)$ for some $i \in \{1, \dots, m\}$, since $E$ is acyclic and $t \notin E^0$. Thus, letting $k \in \{1, \dots, m\}$ be such that the length of $u_k$ is maximal, we have $$[u_k^*, \sum_{i=1}^m b_iu_it]= \sum_{i=1}^m b_iu_k^*u_it - \sum_{i=1}^m b_iu_itu_k^* = \sum_{i=1}^m b_iu_k^*u_it.$$ 

Since $r(u_i) = s(t) = r(u_k)$ for all $i \in \{1, \dots, m\}$, and since $E$ is acyclic, it cannot be the case that $u_k = u_iz$ for some $i \in \{1, \dots, m\}$ and $z \in \path(E) \setminus E^0$. Therefore, by the maximality of the length of $u_k$, if $i \neq k$, we have $u_k^*u_i = 0$. Hence $[u_k^*, [\mu,q_lt]] = b_kt$, and therefore $t \in U$.

(2) The proof is entirely analogous to that of (1), but we include the details for the convenience of the reader. We have $$[t^*p_l^*, \mu] = \sum_{i=1}^n a_i(t^*p_l^*)p_iq_i^* - \sum_{i=1}^n a_ip_iq_i^*(t^*p_l^*) =\sum_{i=1}^n a_it^*p_l^*p_iq_i^* - 0 =  \sum_{i=1}^m b_it^*u_i^*,$$ for some $b_i \in K \setminus \{0\}$ and distinct $u_i \in \path(E)$ satisfying $r(u_i)=s(t)$. As before, it could not be the case that $r(u_i) = r(t)$ for some $i \in \{1, \dots, m\}$, since $E$ is acyclic and $t \notin E^0$. Thus, letting $k \in \{1, \dots, m\}$ be such that the length of $u_k$ is maximal, we have $$[\sum_{i=1}^m b_it^*u_i^*,u_k]= \sum_{i=1}^m b_it^*u_i^*u_k - \sum_{i=1}^m b_iu_kt^*u_i^* = \sum_{i=1}^m b_it^*u_i^*u_k.$$ 

Since $r(u_i) = s(t) = r(u_k)$ for all $i \in \{1, \dots, m\}$, and since $E$ is acyclic, it cannot be the case that $u_k = u_iz$ for some $i \in \{1, \dots, m\}$ and $z \in \path(E) \setminus E^0$. Therefore, by the maximality of the length of $u_k$, if $i \neq k$, we have $u_i^*u_k = 0$. Hence $[[t^*p_l^*,\mu], u_k] = b_kt^*$, and therefore $t^* \in U$.
\end{proof}

\begin{lemma} \label{LieIdealLemma2}
Let $K$ be a field having characteristic $p > 0$, and let $E$ be an acyclic graph having only regular vertices. Suppose that for each $u \in E^0$ there is an $m_u \in \N$ such that for all $w \in E^0$ satisfying $d(u,w) = m_u+1$, the number of paths $q=e_1\dots e_k \in \path(E)$ $(e_1, \dots, e_k \in E^1)$ such that $s(q) = u$, $r(q) = w$, and $s(e_2), \dots, s(e_k) \in D(u,m_u)$ is a multiple of $p$. Then the following hold.
\begin{enumerate}
\item[$(1)$] Let $v,u \in E^0$, and let $U \subseteq L_K(E)$ be the Lie ideal generated by $v$. If there exists $t \in \path(E) \setminus E^0$ such that $s(t) = v$ and $r(t) = u$, then $u \in U$.
\item[$(2)$] Let $v \in E^0$, and let $U \subseteq L_K(E)$ be the Lie ideal generated by $v$. Then for all $t, q \in \path(E)$ such that $s(t) = v$, $s(q) = v$, or $r(t) = v = r(q)$, we have $tq^* \in U$.
\item[$(3)$] Let $\mu = \sum_{i=1}^n a_it_iq_i^*\in L_K(E) \setminus \{0\}$ be an arbitrary element $($where $a_i \in K \setminus \{0\}$ and $t_i, q_i \in \path(E)$$)$, and let $U \subseteq L_K(E)$ be the Lie ideal generated by $\mu$. Then $r(t_i) \in U$ for all $i \in \{1, \dots, n\}$. 
\end{enumerate}
\end{lemma}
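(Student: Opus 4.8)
\textbf{Proof proposal for Lemma~\ref{LieIdealLemma2}.}

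The plan is to prove the three parts in order, using each to bootstrap the next, with the ultimate payoff being part (3), which is the statement needed for the ideal/Lie-ideal theorem.

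For part (1), the idea is to leverage the hypothesis that $v \in [L_K(E), L_K(E)]$ (which holds by Proposition~\ref{p_mult}, given the running hypotheses). Since there is a path $t$ with $s(t) = v$, $r(t) = u$, and since $v$ is not a sink, one can bring $t$ into play: compute commutators of $v$ with paths out of $v$. Concretely, $[v, tt^*] = vtt^* - tt^*v = tt^* - tt^* = 0$ is useless, so instead I would use $[t^*, v] = t^*v - vt^* = t^* - 0 = t^*$ (using $s(t^*) = r(t) = u \ne v$ in general, though one must be careful when $u = v$, which cannot happen here as $E$ is acyclic and $t \notin E^0$). Wait --- more carefully: $t^*v = t^*$ only if $s(t^*) = v$, i.e. $r(t) = v$, which fails. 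The correct move is $[v, t] = vt - tv = t - 0 = t$, so $t \in U$; then $[t^*, t] = t^*t - tt^* = r(t) - tt^* = u - tt^*$, so $u - tt^* \in U$. Now $tt^*$ is a product $p p^*$ with $p = t$, and by Theorem~\ref{All-Comm} (condition (1), together with the fact that $u$ lies in the commutator span) combined with the Lie-ideal structure, I need $tt^* \in U$ as well. Here is where I would iterate: $[tt^*, e] $ for edges $e$ out of $r(t) = u$ produces $te e^* (\ldots)$-type terms, and using the $p$-divisibility hypothesis applied to the vertex $u$ (or rather, running the Lemma~\ref{path_sum}(2)/Proposition~\ref{p_mult} argument \emph{inside} $U$), one shows $tt^* \in U$, whence $u \in U$. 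So the real content of (1) is: the ``$u \in [L_K(E),L_K(E)]$'' proof of Proposition~\ref{p_mult} can be run entirely with brackets against a single generator, and the presence of $t$ lets us transport from $v$ to $u$.

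For part (2), given $v$ and paths $t, q$ with (say) $s(t) = s(q) = v$, the goal $tq^* \in U$ should follow by: first getting $v \cdot (\text{stuff}) \cdot v$-type products into $U$ via brackets, e.g. for $s(t) = v = s(q)$ one has $t = vt$, $q^* = q^* v$, and I would compute something like $[t, q^* ] = tq^* - q^* t$; when $r(t) \ne s(q)$ this is just $tq^*$ if also... hmm, $q^* t = 0$ unless $s(q^*) = r(... )$; since $s(q^*) = r(q)$ and we'd need $r(q) = s(t) = v$, i.e. the case $r(t)=v=r(q)$. The cleanest route: reduce to (1)-type statements. Using part (1) to get $r(t) \in U$ and $r(q) \in U$ (when paths exist), then $tq^* = t \, r(t) \, q^*$ and bracketing an element of $U$ of the form $r(t)$ against $q t^*$... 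Actually the direct computation $[t, q^*]$ and $[t q^*, r(q)]$ should, after the acyclicity-driven cancellations of Lemma~\ref{LieIdealLemma1}, isolate $tq^*$ up to scalars; I'd mirror the bracket gymnastics of Lemma~\ref{LieIdealLemma1}(1),(2) but starting from the single vertex generator $v$, which is cleaner because $v$ is idempotent.

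For part (3) --- the main statement --- I would apply Lemma~\ref{LieIdealLemma1} to $\mu = \sum_{i=1}^n a_i t_i q_i^*$. Pick $l$ with $q_l$ of maximal length among $q_i$ with $s(q_i) = s(q_l)$. If there is \emph{some} vertex $w$ with $d(r(q_l), w) < \infty$ and $w \ne s(p_i)$ for all $i$ --- which is guaranteed because $E$ is acyclic with only regular vertices, so from $r(q_l)$ one reaches vertices of arbitrarily large distance, in particular one avoiding the finite set $\{s(t_i)\}$ --- then Lemma~\ref{LieIdealLemma1}(1) gives a path $t \in U$ with $s(t) = r(q_l) = r(t_l)$; actually I want $r(t_i)$, so I should be careful about matching $l$ to the index I want. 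Since $r(t_l) = r(q_l)$, part (1) of this lemma applied to $t$ and then part (1)/(2) lets me climb: $t \in U \Rightarrow r(t) \in U$ (by bracketing $t$ with $t^*$ to get $r(t) - tt^*$, then disposing of $tt^*$ as in my sketch of part (1)), and then from $r(t) \in U$, using that there is a path from $r(t_l)$ backward... no, wrong direction. The fix: $t \in U$ with $s(t) = r(t_l)$ gives $[t^*, t] = r(t) - tt^* \in U$; separately $tt^* + (\ldots) $ lies in $U$ by the Proposition~\ref{p_mult}-style argument applied at vertex $r(t_l)$ using the $p$-divisibility hypothesis, all brackets staying inside $U$; combining yields $r(t_l) \in U$. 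Finally, to get \emph{every} $r(t_i)$ and not just $r(t_l)$: once one vertex is in $U$, part (2) gives a large supply of elements $xy^* \in U$, which one brackets against $\mu$ to peel off summands of $\mu$ one at a time, reducing $n$ and inducting. I expect the main obstacle to be exactly this bookkeeping in (3): carefully choosing the ``escape'' vertex $w$ avoiding the finite bad set, handling the possible coincidences among the $s(t_i), s(q_i)$, and making the induction on $n$ (peeling summands while keeping track of which $r(t_i)$ have been shown to lie in $U$) airtight. The underlying mechanism --- combine Lemma~\ref{LieIdealLemma1}'s path-extraction with the $p$-divisibility that makes vertices sums of commutators --- is clear; the delicacy is purely combinatorial/indexing.
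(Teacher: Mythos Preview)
Your overall architecture matches the paper's: bootstrap (1)$\to$(2)$\to$(3), feed Lemma~\ref{LieIdealLemma1} into the $p$-divisibility hypothesis to manufacture vertices inside $U$, then peel summands of $\mu$ via (2). Two execution points need correction.

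First, a minor one: in (1) you reject $[t^*,v]=t^*$ on the grounds that ``$t^*v=t^*$ only if $s(t^*)=v$''. This is backwards; $t^*v=t^*$ holds because $r(t^*)=s(t)=v$, and $vt^*=0$ because $s(t^*)=r(t)=u\neq v$. The paper uses precisely $[t^*,v]=t^*\in U$, then $[t^*,tq_i]=q_i\in U$ for each of the paths $q_i$ with $s(q_i)=u$ through $D(u,m_u)$ and $d(u,r(q_i))=m_u+1$; this is shorter than your detour via $[v,t]=t$ (which does work, but then requires extra steps to reach the $q_i$).

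Second, and this is the real gap, look at your ``fix'' in (3). From a single path $t\in U$ with $s(t)=r(t_l)$ you compute $[t^*,t]=r(t)-tt^*\in U$ and then assert that a ``Proposition~\ref{p_mult}-style argument'' yields $r(t_l)\in U$. But $[t^*,t]$ carries information about $r(t)$, not about $s(t)=r(t_l)$; with only one $t$ there is no bracket that recovers $s(t)$. What the paper does---and what the Proposition~\ref{p_mult} heuristic actually requires---is to obtain \emph{all} the paths $z_1,\dots,z_k$ with $s(z_j)=r(t_l)$ through $D(r(t_l),m)$ ending at distance $m+1$ (after enlarging $m$ so that every $r(z_j)$ avoids the finite set $\{s(t_i)\}$, exactly the ``escape'' you anticipate). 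Lemma~\ref{LieIdealLemma1}(1) puts each $z_j$ into $U$, and then
\[
\sum_j [z_j,z_j^*]=\sum_j z_jz_j^*-\sum_j r(z_j)=r(t_l)-0,
\]
using Lemma~\ref{path_sum}(2) for the first sum and the $p$-divisibility hypothesis for the second. The same all-paths-at-once mechanism is what makes (1) work: it is $\sum_i q_iq_i^*=u$, not a single $tt^*$, that produces the vertex. Once this is in place, your peeling induction in (3) via part~(2) is exactly how the paper finishes.
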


\begin{proof}
(1) We have $[t^*,v] = t^*v-vt^* = t^* \in U$ ($vt^* = 0$, since $E$ is acyclic). Now, let  $m_u \in \N$ be as in the statement, and let $q_1, \dots, q_n \in \path(E)$ be all the paths through $D(u,m_u)$ such that $d(u,r(q_i)) = m_u+1$ and $s(q_i)=u$. Then for all $i \in \{1, \dots, n\}$ we have $[t^*, tq_i] = q_i - tq_it^* = q_i \in U$ (again $q_it^* = 0$, since $E$ is acyclic, and so $r(t) = s(q_i) \neq r(q_i)$). Thus, by Lemma~\ref{path_sum}(2), $$\sum_{i=1}^n [q_i,q_i^*] = \sum_{i=1}^n q_iq_i^* - \sum_{i=1}^n q_i^*q_i = u - \sum_{i=1}^n q_i^*q_i\in U.$$ By hypothesis, the final sum in the above display consists of vertices, each appearing some number of times that is divisible by $p$, and therefore this implies that $u \in U$.

(2) We may assume that $r(t) = r(q)$, since otherwise $tq^*=0$ and there is nothing to prove.

If $s(t) = v$ and $t \notin E^0$, then by Lemma~\ref{LieIdealLemma1}(1), $t \in U$. Hence $[t,q^*] = tq^* - q^*t \in U$. Now, suppose that $q^*t \neq 0$. Then $s(q) = s(t)$, and since $r(q) = r(t)$ and $E$ is acyclic, this implies that $q = t$, and hence $q^*t = r(t)$. Thus either $[t,q^*] = tq^*$ or $[t,q^*] = tq^* - r(t)$. By (1), it follows that $tq^* \in U$ in either case.

If $s(q) = v$ and $q \notin E^0$, then by Lemma~\ref{LieIdealLemma1}(2), $q^* \in U$. Hence $[t,q^*] = tq^* - q^*t \in U$. Again $q^*t$ is either $0$ or $r(q)$, and in either case we have $tq^* \in U$.

Next, suppose that $r(t) = v = r(q)$ and $t \notin E^0$. Then $[t,v] = t \in U$, and hence $[t,q^*] = tq^* - q^*t \in U$. As before, this implies that $tq^* \in U$ (since $r(t) = v \in U$).

If $r(t) = v = r(q)$ and $q \notin E^0$. Then $[v,q^*] = q^* \in U$, and hence $[t,q^*] = tq^* - q^*t \in U$. As usual, it follows that $tq^* \in U$.

Finally, if $t=v=q$, then $tq^* = v \in U$.

(3) Let $l \in \{1, \dots, n\}$ be such that $q_l$ has maximal length among the $q_i$ having the same source, and let $m_{r(t_l)} \in \N$ be as in the statement of this lemma. By increasing $m_{r(t_l)}$, if necessary, we may assume that for all paths $z \in \path(E)$ through $D(r(t_l),m_{r(t_l)})$ such that $d(r(t_l),r(z)) = m_{r(t_l)}+1$ and $s(z)=r(t_l)$, we have $r(z) \neq s(t_i)$ for all $i \in  \{1, \dots, n\}$. (It is easy to see, by induction, for every $u \in E^0$ and $m > m_u$, that $m$ also satisfies the hypotheses of the lemma, in place of $m_u$.) Let $z_1, \dots, z_k \in \path(E)$ be all the paths $z_i\in \path(E)$ through $D(r(t_l),m_{r(t_l)})$ such that $d(r(t_l),r(z_i)) = m_{r(t_l)}+1$ and $s(z_i)=r(t_l)$. Then, by Lemma~\ref{LieIdealLemma1}(1), for each $i \in  \{1, \dots, k\}$, we have $z_i \in U$. Therefore, by Lemma~\ref{path_sum}(2), $$\sum_{i=1}^k [z_i,z_i^*] = \sum_{i=1}^k z_iz_i^* - \sum_{i=1}^k z_i^*z_i = r(t_l) - \sum_{i=1}^k z_i^*z_i\in U.$$ Again, the final sum in the above display consists of vertices, each appearing some number of times that is divisible by $p$, and therefore this implies that $r(t_l) \in U$.

Now, by (2), we have $t_lq_l^* \in U$, and hence $\mu - t_lq_l^* \in U$, from which one can conclude that $r(t_i) \in U$ whenever $s(q_i) = s(q_l)$ (by repeating the entire procedure above on $\mu - t_lq_l^*$, and so on). It follows that $r(t_i) \in U$ for all $i \in \{1, \dots, n\}$.
\end{proof}

We are now ready for the main result of this section.

\begin{theorem} \label{ideal_iff_Lie_ideal}
Let $K$ be a field having characteristic $p > 0$, and let $E$ be an acyclic graph having only regular vertices. Suppose that for each $u \in E^0$ there is an $m_u \in \N$ such that for all $w \in E^0$ satisfying $d(u,w) = m_u+1$, the number of paths $q=e_1\dots e_k \in \path(E)$ $(e_1, \dots, e_k \in E^1)$ such that $s(q) = u$, $r(q) = w$, and $s(e_2), \dots, s(e_k) \in D(u,m_u)$ is a multiple of $p$.

Then a subset $U \subseteq L_K(E)$ is an ideal if and only if it is a Lie ideal. Moreover, if $U \subseteq L_K(E)$ is an ideal, then $[U, L_K(E)] = U$.
\end{theorem}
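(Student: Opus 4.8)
The plan is to prove three statements in succession: (A) every ideal of $L_K(E)$ is a Lie ideal; (B) every Lie ideal of $L_K(E)$ is an ideal; (C) $[U,L_K(E)]=U$ for every ideal $U$. Statement (A) is immediate: for an ideal $U$, $u\in U$, and $x\in L_K(E)$ one has $xu,ux\in U$, hence $[x,u]\in U$, and $U$ is a $K$-subspace by hypothesis. The computational heart of the proof (needed for (C)) is the following consequence of the hypotheses, which coincide with those of Proposition~\ref{p_mult}: for each $v\in E^0$, fixing $m_v\in\N$ as in the statement and letting $\{q_{ij}\}\subseteq\path(E)$ be the family of paths from $v$ described in Lemma~\ref{path_sum}(2), indexed so that $r(q_{ij})=:v_i$ depends only on $i$, that lemma gives $v=\sum_{i,j}q_{ij}q_{ij}^{*}$, while by hypothesis each $n_i:=\#\{j\}$ is a multiple of $p$; since $\mathrm{char}\,K=p$, it follows that $\sum_{i,j}r(q_{ij})=\sum_i n_iv_i=0$ in $L_K(E)$.

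For (B), let $U$ be a Lie ideal and $0\neq\mu\in U$, written $\mu=\sum_{i=1}^{n}a_it_iq_i^{*}$ with $a_i\in K\setminus\{0\}$ and $r(t_i)=r(q_i)$. By Lemma~\ref{LieIdealLemma2}(3), $r(t_i)\in U$ for each $i$, so $r(t_i)\in H:=U\cap E^0$; applying Lemma~\ref{LieIdealLemma2}(1) to each vertex of $H$ shows $H$ is hereditary. Fixing $v:=r(t_i)$, I claim $L_K(E)\,v\,L_K(E)\subseteq U$. Indeed, $L_K(E)\,v\,L_K(E)$ is spanned by those basis elements $xy^{*}$ with $r(x)=r(y)$ reachable from $v$ (one inclusion: $xy^{*}=(xz^{*})\,v\,(zy^{*})$ for a path $z$ from $v$ to $r(x)$; the other: expand $\alpha v\beta$ using the defining relations), and for each such $xy^{*}$ we have $r(x)\in H\subseteq U$ by hereditariness, so Lemma~\ref{LieIdealLemma2}(2) applied to the vertex $r(x)$ gives $xy^{*}\in U$. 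Since $\mu=\sum_i a_it_i\,r(t_i)\,q_i^{*}$, the two-sided ideal generated by $\mu$ is contained in $\sum_i L_K(E)\,r(t_i)\,L_K(E)\subseteq U$; as $0\neq\mu\in U$ was arbitrary (and $0$ generates the zero ideal), $U$ equals the sum of the ideals generated by its elements, hence is an ideal. This also shows $U=\sum_{v\in H}L_K(E)\,v\,L_K(E)$, which, using that $H$ is hereditary, equals $\mathrm{span}_K\{xy^{*}:x,y\in\path(E),\ r(x)=r(y)\in H\}$; in particular $U$ is closed under $(\ )^{*}$.

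For (C), let $U$ be an ideal, so $[U,L_K(E)]\subseteq U$; by the description of $U$ just obtained it suffices to show $pq^{*}\in[U,L_K(E)]$ whenever $r(p)=r(q)=v\in H$. Since $s(q_{ij})=v$ we have $q_{ij}=vq_{ij}\in U$, hence $pq_{ij}\in U$ and $(pq_{ij})^{*}\in U$. Substituting $v=\sum_{i,j}q_{ij}q_{ij}^{*}$ into $pq^{*}=p\,v\,q^{*}$ gives
\[
pq^{*}=\sum_{i,j}(pq_{ij})(qq_{ij})^{*}=\sum_{i,j}\bigl([\,pq_{ij},\,(qq_{ij})^{*}\,]+q_{ij}^{*}\,(q^{*}p)\,q_{ij}\bigr).
\]
If $p\neq q$, then, as $E$ is acyclic and $r(p)=r(q)$, neither of $p,q$ is an initial subpath of the other, so $q^{*}p=0$ and $pq^{*}\in[U,L_K(E)]$. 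If $p=q$, then $q_{ij}^{*}(q^{*}p)q_{ij}=q_{ij}^{*}\,v\,q_{ij}=q_{ij}^{*}q_{ij}=r(q_{ij})$, so
\[
pp^{*}=\sum_{i,j}[\,pq_{ij},\,(pq_{ij})^{*}\,]+\sum_{i,j}r(q_{ij})=\sum_{i,j}[\,pq_{ij},\,(pq_{ij})^{*}\,]\in[U,L_K(E)],
\]
the last equality using the vanishing established in the first paragraph. Hence $U\subseteq[U,L_K(E)]$, and therefore $[U,L_K(E)]=U$.

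I expect the step requiring the most care to be (B): the passage from "contained in a Lie ideal" to "generates an ideal" is formal, but one must identify precisely which basis elements belong to $L_K(E)\,v\,L_K(E)$ and verify that parts (1) and (2) of Lemma~\ref{LieIdealLemma2} together account for all of them. In (C), the one genuine idea is to push the Lemma~\ref{path_sum}(2) expression for $v$ through $pq^{*}=p\,v\,q^{*}$, thereby reducing membership in $[U,L_K(E)]$ to the vanishing of $\sum_{i,j}r(q_{ij})$, which the divisibility hypothesis together with $\mathrm{char}\,K=p$ guarantees; the remaining manipulations are routine applications of (V), (E1), (E2), and (CK1) to paths and ghost paths.
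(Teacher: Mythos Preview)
Your proof is correct, and while it rests on the same key input (Lemma~\ref{LieIdealLemma2}) as the paper's, you organize both (B) and (C) differently. For (B), the paper reduces via Lemma~\ref{LieIdealLemma2}(2,3) to $\mu = tq^*$ and then checks by a case analysis on the product $tq^*\cdot wu^*$ that $\mu wu^*\in U$; you instead extract the structural content by setting $H=U\cap E^0$, showing $H$ is hereditary via Lemma~\ref{LieIdealLemma2}(1,3), and identifying $U$ with $\mathrm{span}_K\{xy^*:r(x)=r(y)\in H\}$, from which the ideal property is immediate. This explicit description of $U$ is a pleasant byproduct and dovetails with Corollary~\ref{AMPAPMCor}. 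For (C), the paper treats $t\neq q$ and $t=q$ separately, writing $tq^*=[t,q^*]$ in the first case (after checking $t\in U$) and $tt^*=[t,t^*]+\sum_i[z_i,z_i^*]$ in the second; your approach is more uniform, inserting the expansion $v=\sum_{i,j}q_{ij}q_{ij}^*$ directly into $pq^*=pvq^*$, with acyclicity killing the cross-terms when $p\neq q$ and the divisibility hypothesis handling $p=q$. Both routes are short; yours trades a small amount of extra bookkeeping in (B) (verifying the spanning-set description of $L_K(E)\,v\,L_K(E)$) for a cleaner (C).
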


\begin{proof}
In any ring, every ideal is a Lie ideal, and so we need only prove that if $U$ is a Lie ideal, then it is also an ideal. Furthermore, since in this case $U$ is a $K$-subspace of $L_K(E)$, we need only show that for all $\mu \in U \setminus \{0\}$ and $w,u \in \path(E)$ we have $\mu wu^*, wu^*\mu \in U$. (Elements of the form $wu^*$ generate $L_K(E)$ as a $K$-vector space.) But, $[wu^*, \mu] = wu^*\mu - \mu wu^* \in U$, and hence $\mu wu^* \in U$ if and only if $wu^*\mu \in U$. Thus, it suffices to show that $\mu wu^* \in U$.

Write $\mu = \sum_{i=1}^n a_it_iq_i^*$ for some $a_i \in K \setminus \{0\}$ and $t_i, q_i \in \path(E)$. By Lemma~\ref{LieIdealLemma2}(2,3), $t_iq_i^* \in U$ for all $i \in \{1, \dots, n\}$, and hence we may assume that $\mu = tq^*$ for some $t, q \in \path(E)$. Now, if $\mu wu^* = 0$, then we are done. So let us assume that $\mu wu^* \neq 0$. This can happen only if there is some $z \in \path(E)$ such that either $q = wz$ or $w = qz$. Thus, either $tq^*wu^* = tz^*u^*$ or $tq^*wu^*= tzu^*$. Now, by Lemma~\ref{LieIdealLemma2}(3), $r(t) \in U$, and by part (2) of the same lemma, $tz^*u^* \in U$. Let us therefore suppose that $tq^*wu^*= tzu^*$ and that $z \notin E^0$ (for otherwise $tq^*wu^* = tz^*u^*$). By Lemma~\ref{LieIdealLemma2}(2), we still have $t \in U$. If $t \in E^0$, then the same lemma implies that $tzu^* = zu^* \in U$. So let us also suppose that $t \notin E^0$. Then $[t, z] = tz - zt = tz - 0 = tz\in U$ ($zt=0$, since $E$ is acyclic). Finally, this implies that $tzu^* \in U$, by Lemma~\ref{LieIdealLemma2}(2,3). Thus, in all cases, $\mu wu^* \in U$, as desired.

For the final claim, let $\mu = \sum_{i=1}^n a_it_iq_i^* \in U \setminus \{0\}$ (for some $a_i \in K \setminus \{0\}$ and $t_i, q_i \in \path(E)$). We wish to show that $\mu \in [U,L_K(E)]$. Again, by Lemma~\ref{LieIdealLemma2}(2,3), $t_iq_i^* \in U$ for all $i \in \{1, \dots, n\}$, and hence we may assume that $\mu = tq^*$ for some $t, q \in \path(E)$ satisfying $r(t)=r(q)$. By the same lemma, we have $t \in U$. If $t \neq q$, since $E$ is acyclic, we have $tq^* = tq^*-q^*t = [t,q^*] \in [U,L_K(E)]$. Let us therefore suppose that $t=q$, let $m_{r(t)} \in \N$ be as in the statement, and let $z_1,\dots, z_n \in \path(E)$ be all the paths $z_i\in \path(E)$ through $D(r(t),m_{r(t)})$ such that $d(r(t),r(z_i)) = m_{r(t)}+1$ and $s(z_i)=r(t)$. Then, by Lemma~\ref{path_sum}(2) and our hypotheses, using the usual argument, we have $$tq^* = tt^* - r(t) + r(t) = [t,t^*] + r(t) = [t,t^*] + \sum_{i=1}^n z_iz_i^* = [t,t^*] + \sum_{i=1}^n [z_i, z_i^*].$$ Now, each $z_i \in U$, by Lemma~\ref{LieIdealLemma2}(2), and hence the above display implies that $tq^* \in [U,L_K(E)]$.
\end{proof}

Not all commutator rings have the property that the ideals and Lie ideals coincide. For instance, if $R$ is a commutator ring with nonzero center, then any additive subgroup of the center is a Lie ideal, but certainly is not always an ideal. Less trivially, let $K$ be a field and let $R = \End_R(\bigoplus_{i=1}^\infty K)$ be the ring of row-finite matrices over $K$. Then $R = [R,R]$, by~\cite[Theorem 13]{ZM}. It is well known that $R$ has exactly three ideals, namely $0$, $R$, and the ideal $I$ consisting of the endomorphisms having finite rank, i.e.\ the matrices having nonzero entries in only finitely many columns (e.g., see~\cite[Exercises 3.14-3.16]{TL}). It is easy to show that the Lie ideal $[I, R]$ consists of the elements of $I$ that have trace zero (each matrix in $I$ has only finitely many nonzero entries on the main diagonal, and so it makes sense to compute its trace), and hence is not an ideal of $R$.

The rest of the section is devoted to building examples commutator Leavitt path algebras with various (Lie) ideal structures. We first recall some known results and a previously-defined notion.

\begin{definition}
A graph $E$ is said to satisfy condition (K) if for each vertex $v \in E^0$ such that there is a \emph{simple closed path based at} $v$ (i.e., there is a path $p=e_1\dots e_n \in \path(E) \setminus E^0$, for some $e_1, \dots, e_n \in E^1$, such that $s(p)=v=r(p)$ and $s(e_i) \neq v$ for $i >1$), there are at least two distinct closed simple paths based at $v$. \hfill $\Box$
\end{definition}

The following statement is a combination of a theorem due to Ara, Moreno, and Pardo with a theorem of Aranda Pino, Pardo, and Siles Molina. (See Section~\ref{NotationSect} for the definitions of \emph{hereditary} and \emph{saturated}.)

\begin{theorem}[Theorem 5.3 in \cite{AMP} and Theorem 4.5(3,4) in \cite{APM2}] \label{AMPAPMTheorem}
Let $K$ be a field, and let $E$ be a row-finite graph. We let $\mathcal{H}$ denote the lattice of all subsets of $E^0$ that are hereditary and saturated, let $\mathcal{L}(L_K(E))$ denote the lattice of all ideals of $L_K(E)$, and let $\mathcal{L}_{gr}(L_K(E))$ denote the lattice of all graded ideals of $L_K(E)$. Then the following hold.
\begin{enumerate}
\item[$(1)$] There is an order-isomorphism between $\mathcal{H}$ and $\mathcal{L}_{gr}(L_K(E))$.
\item[$(2)$] $\mathcal{L}(L_K(E)) = \mathcal{L}_{gr}(L_K(E))$ if and only if $E$ satisfies condition $($K$)$.
\end{enumerate}
\end{theorem}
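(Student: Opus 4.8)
The statement combines two previously known theorems, so the plan is to reconstruct their proofs along standard lines. Throughout I would work with the canonical $\Z$-grading on $L_K(E)$ in which each vertex has degree $0$, each edge $e$ has degree $1$, and each ghost edge $e^*$ has degree $-1$, so $\mathrm{deg}(pq^*) = |p|-|q|$; recall that a subspace is a graded ideal precisely when it is an ideal spanned by homogeneous elements. Since $E$ is row-finite, every vertex is either a sink or a regular vertex, which keeps the graph constructions below clean. For part (1) I would define $\varphi\colon\mathcal{H}\to\mathcal{L}_{gr}(L_K(E))$ by sending a hereditary saturated set $H$ to the ideal $I(H)$ it generates, and $\psi\colon\mathcal{L}_{gr}(L_K(E))\to\mathcal{H}$ by $\psi(I)=I\cap E^0$, and show these are mutually inverse order isomorphisms. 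The routine checks: $I(H)$ is graded since it is generated by the degree-$0$ elements of $H$; and for a graded ideal $I$ the set $I\cap E^0$ is hereditary, because $v\in I$ and $s(e)=v$ give $r(e)=e^*ve=e^*e\in I$, and saturated, because if $v$ is regular with every $r(e)$ (for $s(e)=v$) in $I$, then $ee^*=e\,r(e)\,e^*\in I$, so $v=\sum_{s(e)=v}ee^*\in I$ by (CK2). Both $\varphi$ and $\psi$ are visibly order-preserving.

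The content of (1) is the pair of identities $I(H)\cap E^0=H$ and $I=I(I\cap E^0)$ for every graded ideal $I$. For the first I would establish the explicit description $I(H)=\mathrm{span}_K\{pq^*:p,q\in\path(E),\ r(p)=r(q)\in H\}$, together with the isomorphism $L_K(E)/I(H)\cong L_K(E/H)$ onto the Leavitt path algebra of the quotient graph $E/H$ obtained by deleting $H$ and all edges with range in $H$; since the surviving vertices of $E/H$ are exactly those of $E^0\setminus H$ and remain linearly independent, no $v\notin H$ lies in $I(H)$. For the second identity, given a homogeneous element of $I$ one reduces it—multiplying on both sides by suitable $pq^*$ and applying (CK1) and (CK2)—to a nonzero scalar multiple of a vertex lying in $I$; since $I$ is spanned by its homogeneous elements, it is generated by $I\cap E^0$. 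I expect the verification of the explicit form of $I(H)$, the quotient isomorphism $L_K(E)/I(H)\cong L_K(E/H)$, and this reduction step to be the most technical portion of the argument; it is the heart of the Ara--Moreno--Pardo analysis.

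For part (2), the inclusion $\mathcal{L}_{gr}(L_K(E))\subseteq\mathcal{L}(L_K(E))$ is automatic, so the claim is that every ideal is graded exactly when $E$ satisfies condition (K). I would use the known characterization that $E$ satisfies (K) if and only if every quotient graph $E/H$ ($H$ hereditary saturated) satisfies condition (L), i.e.\ has no cycle without an exit. If (K) fails, pick $H$ so that $E/H$ contains a cycle $c$ without an exit; then the base vertex $v$ of $c$ spans a corner $vL_K(E/H)v\cong K[x,x^{-1}]$, and the ideal of $L_K(E/H)$ generated by $v-c$ is not graded (it contains $v-c$ but, as a restriction-to-the-corner argument shows, not the homogeneous component $v$). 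Pulling this ideal back through $L_K(E)\to L_K(E)/I(H)\cong L_K(E/H)$ yields a non-graded ideal of $L_K(E)$, so $\mathcal{L}(L_K(E))\neq\mathcal{L}_{gr}(L_K(E))$.

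Conversely, suppose $E$ satisfies (K), and let $I$ be an arbitrary ideal. Put $H=I\cap E^0$ and $J=I(H)\subseteq I$, and pass to $\bar I=I/J\subseteq L_K(E)/J\cong L_K(E/H)$. By part (1) we have $\bar I\cap(E/H)^0=\emptyset$, and since an ideal meeting $E^0$ trivially also meets $\mathrm{span}_K(E^0)$ trivially (if $\sum k_vv\in\bar I$, multiplying by a vertex shows each $k_vv\in\bar I$), the quotient map out of $L_K(E/H)$ is injective on vertices. As $E/H$ still satisfies (K), hence condition (L), the Cuntz--Krieger uniqueness theorem forces this map to be injective, so $\bar I=0$ and $I=J$ is graded. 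The principal obstacle in part (2) is exactly this last structural input—the Cuntz--Krieger uniqueness theorem for Leavitt path algebras of graphs satisfying condition (L), equivalently the fact that under condition (L) every nonzero ideal contains a vertex—which is the technical core of the Aranda Pino--Pardo--Siles Molina contribution; the rest of (2) is bookkeeping about quotient graphs and condition (K).
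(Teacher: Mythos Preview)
The paper does not supply a proof of this theorem at all: it is simply quoted from \cite{AMP} and \cite{APM2} and then applied. Your sketch reconstructs the standard arguments from those sources---the correspondence $H\mapsto I(H)$, $I\mapsto I\cap E^0$ together with the quotient-graph description $L_K(E)/I(H)\cong L_K(E/H)$ for part~(1), and the reduction to the Cuntz--Krieger uniqueness theorem via condition~(L) on quotient graphs for part~(2)---and the outline is correct as far as it goes. Since there is no proof in the present paper to compare against, the only comment is that your proposal matches the approach of the original references rather than anything done here.
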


\begin{corollary} \label{AMPAPMCor}
Let $K$ be a field, and let $E$ be an acyclic row-finite graph. Then there is an order isomorphism between the lattice of all hereditary saturated subsets of $E^0$ and the lattice of all ideals of $L_K(E)$.

In particular, there is such an order isomorphism whenever $L_K(E) = [L_K(E), L_K(E)]$.
\end{corollary}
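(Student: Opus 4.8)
The plan is to deduce Corollary~\ref{AMPAPMCor} directly from Theorem~\ref{AMPAPMTheorem}. Since $E$ is acyclic, it contains no cycles at all, and in particular no simple closed path based at any vertex. Hence condition (K) holds vacuously for $E$, so part~(2) of Theorem~\ref{AMPAPMTheorem} gives $\mathcal{L}(L_K(E)) = \mathcal{L}_{gr}(L_K(E))$. Combining this with part~(1), we obtain an order-isomorphism between $\mathcal{H}$, the lattice of hereditary saturated subsets of $E^0$, and $\mathcal{L}(L_K(E))$, the lattice of all ideals of $L_K(E)$. This handles the first assertion; the only point requiring a word of care is that Theorem~\ref{AMPAPMTheorem} is stated for row-finite graphs, which is exactly the hypothesis we have imposed in the corollary, so nothing extra is needed here.

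For the ``in particular'' clause, I would invoke Theorem~\ref{PerfectLPAs}. If $L_K(E) = [L_K(E), L_K(E)]$, then conditions (1) and (2) of that theorem tell us that $E$ is acyclic and $E^0$ contains only regular vertices. A graph in which every vertex is regular is, by the definition of \emph{regular} in Section~\ref{NotationSect} (a vertex of finite out-degree which is not a sink), in particular row-finite: every vertex has finite out-degree. Thus $E$ is an acyclic row-finite graph, and the first part of the corollary applies, yielding the desired order isomorphism.

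I do not anticipate any genuine obstacle here; the proof is essentially a matter of checking that the hypotheses of the cited theorem are met. The only mild subtlety worth flagging explicitly in the write-up is the passage from ``only regular vertices'' to ``row-finite,'' and the vacuous verification of condition (K) for acyclic graphs — both are immediate from the definitions recalled in Section~\ref{NotationSect}, but stating them makes the argument self-contained. Accordingly, the written proof will be short: first observe acyclic $\Rightarrow$ condition (K); then apply Theorem~\ref{AMPAPMTheorem}(1,2); then, for the final sentence, cite Theorem~\ref{PerfectLPAs} to reduce the commutator-ring case to the acyclic row-finite case already treated.
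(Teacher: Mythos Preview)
Your proposal is correct and follows essentially the same approach as the paper: observe that acyclic graphs satisfy condition (K) vacuously and apply Theorem~\ref{AMPAPMTheorem}, then for the final clause invoke Theorem~\ref{PerfectLPAs} to conclude that $E$ is acyclic and row-finite. The paper's proof is more terse, but your added remark that ``only regular vertices'' implies ``row-finite'' is a helpful clarification of a step the paper leaves implicit.
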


\begin{proof}
This follows from Theorem~\ref{AMPAPMTheorem}, since an acyclic graph $E$ satisfies condition (K) vacuously. The last claim follows from Theorem~\ref{PerfectLPAs}, which implies that if $L_K(E) = [L_K(E), L_K(E)]$, then $E$ must be acyclic and row-finite.
\end{proof}

With the help of the above corollary we can now construct an infinite class of pairwise non-isomorphic commutator Leavitt path algebras, where the field is fixed, and all the graphs have the same cardinality (as promised in the previous section).

\begin{proposition} \label{perfectLPA}
Let $K$ be a field having characteristic $p > 0$, and for each $n \in \Z_+$ let $E_n$ be the graph pictured below. $$\xymatrix{ 
{\bullet}^{v_{11}} \ar [r] ^{(p)} \ar[d]^{(p)} & {\bullet}^{v_{12}} \ar [r] ^{(p)} \ar[d]^{(p)} & {\bullet}^{v_{13}} \ar [r] ^{(p)}  \ar[d]^{(p)} & \ar@{.}[r] & \\
{\bullet}^{v_{21}} \ar [r] ^{(p)} \ar[d]^{(p)} & {\bullet}^{v_{22}} \ar [r] ^{(p)} \ar[d]^{(p)} & {\bullet}^{v_{23}} \ar [r] ^{(p)} \ar[d]^{(p)} & \ar@{.}[r] &\\
{\bullet}^{v_{31}} \ar [r] ^{(p)} \ar[d]^{(p)} & {\bullet}^{v_{32}} \ar [r] ^{(p)} \ar[d]^{(p)}  & {\bullet}^{v_{33}} \ar [r] ^{(p)}\ar[d]^{(p)} & \ar@{.}[r] &\\
\ar@{.}[d] & \ar@{.}[d] & \ar@{.}[d] & & \\
{\bullet}^{v_{n1}} \ar [r] ^{(p)} & {\bullet}^{v_{n2}} \ar [r] ^{(p)} & {\bullet}^{v_{n3}} \ar [r] ^{(p)} & \ar@{.}[r] &
}$$ Then for each $n \in \Z_+$, $L_K(E_n) = [L_K(E_n), L_K(E_n)]$, and $L_K(E)$ has precisely $n+1$ ideals, which form a chain. In particular, $L_K(E_n) \not\cong L_K(E_m)$ if $n \neq m$.
\end{proposition}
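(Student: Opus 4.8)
The plan is to verify the three assertions of Proposition~\ref{perfectLPA} in sequence: that each $L_K(E_n)$ is a commutator ring, that it has exactly $n+1$ ideals forming a chain, and finally that these two facts force the $L_K(E_n)$ to be pairwise non-isomorphic. First I would check that each graph $E_n$ satisfies the hypotheses of Proposition~\ref{p_mult}. Each $E_n$ is visibly acyclic (all edges point ``down or to the right''), every vertex $v_{ij}$ has finite out-degree $2p$ (for $i<n$) or $p$ (for $i=n$, only edges to the right), so all vertices are regular. Taking $m_u = 0$ for every vertex $u = v_{ij}$, the paths of length $1$ out of $v_{ij}$ are the $p$ edges to $v_{i,j+1}$ and, if $i<n$, the $p$ edges to $v_{i+1,j}$; in either direction the number of such edges to a fixed target vertex $w$ with $d(u,w)=1$ is exactly $p$, hence a multiple of $p$. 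So Proposition~\ref{p_mult} applies and $L_K(E_n) = [L_K(E_n), L_K(E_n)]$.

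Next I would count the ideals using Corollary~\ref{AMPAPMCor}: since $E_n$ is acyclic and row-finite, the ideals of $L_K(E_n)$ are in order-preserving bijection with the hereditary saturated subsets of $E_n^0$. So the task reduces to enumerating these subsets. For hereditariness, note that from any vertex $v_{ij}$ one can reach every vertex $v_{i'j'}$ with $i' \geq i$ and $j' \geq j$; so a hereditary set, once it contains $v_{ij}$, must contain the whole ``lower-right rectangle'' from $v_{ij}$. I expect to show that the only hereditary saturated subsets are $H_0 = \emptyset$ and, for $1 \leq k \leq n$, the sets $H_k$ consisting of all vertices in rows $n-k+1, \dots, n$ (equivalently, $v_{ij} \in H_k$ iff $i > n-k$), together with $H_n = E_n^0$ itself --- giving $n+1$ in total, totally ordered by inclusion $H_0 \subsetneq H_1 \subsetneq \dots \subsetneq H_n$. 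The containment $H_0 \subsetneq H_1 \subsetneq \cdots$ is immediate; the work is to argue there is nothing else. Hereditariness rules out sets that contain $v_{ij}$ without containing row $i$ entirely to the right of column $j$ and all lower rows; saturatedness (applied to a vertex $v_{ij}$ all of whose out-neighbours $v_{i,j+1}$ and $v_{i+1,j}$ lie in the set) then forces any such set to ``fill in'' leftward and upward within a block of consecutive bottom rows. Carefully chasing these two closure conditions should show a nonempty hereditary saturated set is exactly a union of bottom rows, i.e.\ one of the $H_k$; I would also need to confirm each $H_k$ genuinely is both hereditary (clear: rows below a bottom row are bottom rows) and saturated (clear: any vertex with all out-neighbours in $H_k$ must itself be in one of the bottom $k$ rows, since a vertex in row $\leq n-k$ has an out-neighbour in row $\leq n-k$).

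Finally, non-isomorphism: an isomorphism of rings $L_K(E_n) \cong L_K(E_m)$ induces an isomorphism of their ideal lattices, hence of their chains of $n+1$ versus $m+1$ ideals; a chain of length $n+1$ is isomorphic to a chain of length $m+1$ only if $n = m$. This last step is a one-line invariant argument and presents no difficulty.

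The main obstacle I anticipate is the combinatorial bookkeeping in the middle step: proving that the hereditary saturated subsets of $E_n^0$ are \emph{precisely} the $n+1$ sets $H_k$ and no others. One must be careful that saturatedness could in principle force unexpected vertices into a set --- e.g.\ a set containing all of rows $n-k+1,\dots,n$ but only part of row $n-k$ --- and show that such configurations always either violate hereditariness or get completed by saturation to a full union of rows. Handling the boundary row $i=n$ (whose vertices have only rightward out-edges) and the fact that a single row $v_{i1}, v_{i2}, \dots$ is infinite (so "hereditary" automatically drags in the whole infinite tail once one vertex of the row is in) requires a little attention, but no genuinely new idea. Everything else follows by citing Proposition~\ref{p_mult} and Corollary~\ref{AMPAPMCor}.
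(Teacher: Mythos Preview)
Your proposal is correct and follows essentially the same approach as the paper: both apply Proposition~\ref{p_mult} with $m_u=0$, both classify the hereditary saturated subsets of $E_n^0$ as exactly the $n+1$ ``unions of bottom rows'' via the hereditary-then-saturated chase (the paper picks $v_{kl}\in H$ with $k$ minimal, uses hereditariness to reach row $n$, then uses saturation to fill leftward along row $n$ and back up to row $k$), and both conclude via Corollary~\ref{AMPAPMCor}. The only difference is that your write-up is more explicit about verifying each $H_k$ is hereditary and saturated and about the potential edge cases, whereas the paper handles these tersely.
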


\begin{proof}
By Proposition~\ref{p_mult} (with $m_u=0$ for all $u \in E^0$), we have $L_K(E_n) = [L_K(E_n), L_K(E_n)]$. We note that  for each $k \in \{1, \dots, n\}$, the set $\bigcup_{i=k}^n(\bigcup_{j=1}^\infty \{v_{ij}\}) \subseteq E^0$ is hereditary and saturated. On the other hand, suppose that $H \subseteq E_n^0$ is a nonempty hereditary saturated subset, and let $v_{kl} \in H$ be a vertex with $k$ minimal.  Since $H$ is hereditary, we must have $v_{nl} \in H$. Since $H$ is saturated, we must then have $v_{n1}, \dots, v_{n,l-1} \in H$, and hence also $v_{ij} \in H$ for $i \geq k$ and $j \leq l$. Using the assumption that $H$ is hereditary again, it follows that $H = \bigcup_{i=k}^n(\bigcup_{j=1}^\infty \{v_{ij}\})$. Thus, these, together with the empty set, are precisely the hereditary saturated subsets of $E^0$. Since these subsets form a chain, the desired conclusion follows from Corollary~\ref{AMPAPMCor}.

The final claim is immediate.
\end{proof}

Using a similar approach, one can construct commutator Leavitt path algebras with infinitely many ideals.

\begin{example}
Let $K$ be a field having characteristic $p > 0$, and let $E_\infty$ be the graph pictured below. $$\xymatrix{ 
{\bullet}^{v_{11}} \ar [r] ^{(p)} \ar[d]^{(p)} & {\bullet}^{v_{12}} \ar [r] ^{(p)} \ar[d]^{(p)} & {\bullet}^{v_{13}} \ar [r] ^{(p)}  \ar[d]^{(p)} & \ar@{.}[r] & \\
{\bullet}^{v_{21}} \ar [r] ^{(p)} \ar[d]^{(p)} & {\bullet}^{v_{22}} \ar [r] ^{(p)} \ar[d]^{(p)} & {\bullet}^{v_{23}} \ar [r] ^{(p)} \ar[d]^{(p)} & \ar@{.}[r] &\\
{\bullet}^{v_{31}} \ar [r] ^{(p)} \ar[d]^{(p)} & {\bullet}^{v_{32}} \ar [r] ^{(p)} \ar[d]^{(p)}  & {\bullet}^{v_{33}} \ar [r] ^{(p)}\ar[d]^{(p)} & \ar@{.}[r] &\\
\ar@{.}[d] & \ar@{.}[d] & \ar@{.}[d] & & \\
&&&&
}$$ By the same argument as in Proposition~\ref{perfectLPA}, $L_K(E_\infty) = [L_K(E_\infty), L_K(E_\infty)]$. It is also easy to see that the hereditary saturated subsets of $E_\infty^0$ are of the form $\{v_{ij} \mid i \geq k, j \geq l\}$ ($k,l \in \Z_+$). Thus, by Corollary~\ref{AMPAPMCor}, $L_K(E_\infty)$ has infinitely many ideals, and they do not form a chain.  \hfill $\Box$
\end{example}

We mention in passing that the rings $L_K(E_n)$ and $L_K(E_\infty)$ constructed above are prime, by \cite[Proposition 5.6]{APM2}. 


\section{Direct Limits}

We conclude by briefly noting that the commutator rings produced in the previous two sections are actually special cases of a more general construction described below, though it is not always informative to view the commutator Leavitt path algebras as such.

\begin{lemma} \label{DirLimLemma}
Let $\,(I, \leq)$ be a directed set, let $\,\{R_i\}_{i\in I}$ be a family of $\,($not necessarily unital$)$ rings, and let $\,\{f_{ij} \mid  i,j \in I, \ i \leq j \}$ be a family of injective ring homomorphisms $f_{ij} : R_i \rightarrow R_j$, such that $((R_i)_{i\in I}, (f_{ij})_{i\leq j})$ forms a directed system. Also, let $D$ be the direct limit of this system. If for each $i \in I$ there is a $j \in I$ with $i < j$, such that $f_{ij}(R_i) \subseteq [R_j, R_j]$, then $D = [D,D] \neq 0$.
\end{lemma}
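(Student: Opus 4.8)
The plan is to work directly with the standard construction of the direct limit $D$ as a quotient of the disjoint union $\bigsqcup_{i \in I} R_i$ modulo the equivalence that identifies $x \in R_i$ with $f_{ij}(x) \in R_j$ for $i \le j$, with ring operations defined by passing to a common index (which exists since $(I,\le)$ is directed). Since all the $f_{ij}$ are injective, each canonical map $\phi_i : R_i \to D$ is injective, so we may regard each $R_i$ as a subring of $D$, and $D = \bigcup_{i \in I} \phi_i(R_i)$. I would first record this description, then observe that $D \neq 0$: indeed, fix any $i \in I$; by hypothesis there is $j > i$ with $\phi_j(R_j) \supseteq \phi_j([R_j,R_j]) \supseteq \phi_j(f_{ij}(R_i)) = \phi_i(R_i)$, and in particular, applying the hypothesis again starting from $j$, we see $R_j$ itself is nonzero provided some $R_i$ is. (If every $R_i$ were the zero ring the inclusion $f_{ij}(R_i) \subseteq [R_j,R_j]$ would be vacuous and $D = 0$; but then the claimed conclusion $D = [D,D] \ne 0$ would be false. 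So I should note that the hypothesis implicitly forces the rings to be nonzero — actually, one must check this: if $R_j = 0$ for all $j$ then the conclusion fails, so the intended reading is that the $R_i$ are not all zero, or one simply derives $D \ne 0$ from $D = [D,D]$ once a nonzero element is produced. The cleanest route: produce a nonzero element of $D$ directly, see below.)

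The heart of the argument is the inclusion $D \subseteq [D,D]$ (the reverse inclusion $[D,D] \subseteq D$ is automatic). Let $a \in D$ be arbitrary. Since $D = \bigcup_i \phi_i(R_i)$, there is some $i \in I$ and $x \in R_i$ with $a = \phi_i(x)$. By hypothesis choose $j > i$ with $f_{ij}(R_i) \subseteq [R_j, R_j]$. Then $f_{ij}(x) \in [R_j,R_j]$, so we may write $f_{ij}(x) = \sum_{k=1}^n [y_k, z_k]$ for finitely many $y_k, z_k \in R_j$. Applying the ring homomorphism $\phi_j$, which is compatible with the bracket, we get
$$a = \phi_i(x) = \phi_j(f_{ij}(x)) = \phi_j\Big(\sum_{k=1}^n [y_k,z_k]\Big) = \sum_{k=1}^n [\phi_j(y_k), \phi_j(z_k)] \in [D,D].$$
Here I use the compatibility $\phi_i = \phi_j \circ f_{ij}$ for $i \le j$, and the fact that any ring homomorphism sends $[y,z] = yz - zy$ to $[\phi(y),\phi(z)]$; since $[D,D]$ is the additive subgroup generated by commutators of elements of $D$, the displayed element lies in $[D,D]$. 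As $a$ was arbitrary, $D \subseteq [D,D]$, hence $D = [D,D]$.

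For $D \neq 0$: I would pick any $i_0 \in I$; the hypothesis "$f_{i_0 j}(R_{i_0}) \subseteq [R_j,R_j]$ for some $j > i_0$" is only non-vacuous content when $R_{i_0} \neq 0$, so the standing assumption of the lemma must be that the rings are not all zero (equivalently, that some $R_i$ is nonzero). Granting that some $R_{i_0}$ has a nonzero element $x$, injectivity of $\phi_{i_0}$ gives $\phi_{i_0}(x) \ne 0$ in $D$, so $D \ne 0$. I expect the only genuinely delicate point to be this nonvanishing bookkeeping — making precise why $D \neq 0$ and confirming that injectivity of all the transition maps is what guarantees each $\phi_i$ is injective (a standard but essential fact about direct limits of rings over a directed index set); the commutator inclusion itself is a routine diagram chase once the functoriality of $[\,\cdot\,,\,\cdot\,]$ under ring homomorphisms is invoked. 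I would also remark that this lemma immediately recovers the commutator Leavitt path algebras of the previous sections by exhibiting them as direct limits of suitable finite subgraph pieces, each sitting inside the commutator subspace of a later one.
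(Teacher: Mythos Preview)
Your proof is correct and follows essentially the same route as the paper's: pick a representative of an arbitrary element of $D$ in some $R_i$, push it forward via $f_{ij}$ into $[R_j,R_j]$, and observe that this equals the original element in $D$. Your careful flagging of the $D \neq 0$ issue is apt---the paper simply asserts that nonvanishing follows from injectivity of the transition maps, tacitly assuming (as you do) that not all $R_i$ are zero.
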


\begin{proof}
Let $x \in D$ be any element. Then $x \in R_i$ for some $i \in I$. Let $j \in I$ be such that $i < j$ and $f_{ij}(x) \in [R_j, R_j]$. Then $x = f_{ij}(x)$ as elements of $D$, and hence $x \in [D,D]$. That $D \neq 0$ follows from the injectivity of the transition maps $f_{ij}$.
\end{proof}

\begin{corollary} \label{DirLimMatrix}
Let $\, 1<n\leq m$ be integers, and let $R$ be a unital ring having characteristic $n$. For each $i \in \N$, let $f_i : \M_{m^i}(R) \rightarrow \M_{m^{i+1}}(R)$ be the map that inflates each entry $r \in R$ to the corresponding $m \times m$ matrix that has $r$ as each of the first $n$ entries on the main diagonal and zeros elsewhere. Let $D$ be the direct limit of the resulting directed system. Then $D = [D,D]$.
\end{corollary}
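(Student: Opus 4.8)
The plan is to check that the system $\big((\M_{m^i}(R))_{i\in\N},\, (f_{ij})_{i\le j}\big)$ --- with $f_{ij} = f_{j-1}\circ\cdots\circ f_i$ for $i<j$ and $f_{ii}=\mathrm{id}$ --- satisfies the hypotheses of Lemma~\ref{DirLimLemma}, and then invoke that lemma. First I would record that each $f_i\colon \M_{m^i}(R)\to\M_{m^{i+1}}(R)$ is an injective ring homomorphism. Writing $e = E_{11}+\cdots+E_{nn}\in\M_m(R)$ for the diagonal idempotent with $n$ ones (here $n\le m$ is exactly what makes $e$ fit), the map $f_i$ sends a matrix with $(k,l)$-entry $a_{kl}$ to the block matrix whose $(k,l)$-block is $a_{kl}e\in\M_m(R)$. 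Since $e$ is idempotent and $(ae)(be)=(ab)e$ for $a,b\in R$, a routine block computation gives $f_i(A)f_i(B)=f_i(AB)$; additivity is clear; and $f_i$ is injective because $ae=0$ forces $a=0$ (note $1_R\ne 0$ since $\mathrm{char}(R)=n>1$). As $f_i(I_{m^i})$ is a proper idempotent rather than the identity, these maps are genuinely non-unital, which is fine for Lemma~\ref{DirLimLemma}. Thus $D$ is the direct limit of a directed system over $(\N,\le)$ with injective transition maps.

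The crux is to show that for each $i$ one may take $j=i+1$ in Lemma~\ref{DirLimLemma}, i.e.\ that $f_i(\M_{m^i}(R))\subseteq [\M_{m^{i+1}}(R),\M_{m^{i+1}}(R)]$. Given $A=(a_{kl})\in\M_{m^i}(R)$, the $(k,k)$-block of $f_i(A)$ is $a_{kk}e$, whose trace is $n\,a_{kk}$, so
\[ \mathrm{tr}(f_i(A)) \;=\; \sum_k n\,a_{kk} \;=\; (n1_R)\,\mathrm{tr}(A) \;=\; 0, \]
since $R$ has characteristic $n$. It then remains to use the elementary fact that any trace-zero matrix over a unital ring $S$ lies in $[\M_d(S),\M_d(S)]$ once $d\ge 2$ (which holds here since $m^{i+1}\ge m\ge n\ge 2$): for $k\ne l$ one has $xE_{kl}=[xE_{kl},E_{ll}]$, while a diagonal matrix $\sum_k x_kE_{kk}$ with $\sum_k x_k=0$ equals $\sum_{k<d}x_k(E_{kk}-E_{dd})=\sum_{k<d}[x_kE_{kd},E_{dk}]$. (Alternatively, one may cite the generalization of the Albert--Muckenhoupt--Shoda theorem to matrix rings over arbitrary rings, e.g.\ from~\cite{ZM}.) Hence $f_i(A)$ is a sum of commutators in $\M_{m^{i+1}}(R)$, as needed.

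With the hypotheses of Lemma~\ref{DirLimLemma} verified, that lemma yields $D=[D,D]$ (and $D\ne 0$). I do not anticipate a serious obstacle here: the only points requiring care are the verification that $f_i$ is multiplicative, which rests on $e^2=e$ (equivalently $(ae)(be)=(ab)e$) and also makes transparent that $f_i$ is non-unital, and the bookkeeping of the trace through the inflation, where the hypothesis $\mathrm{char}(R)=n$ is precisely what forces $\mathrm{tr}(f_i(A))$ to vanish. The hypotheses $1<n\le m$ enter, respectively, to guarantee $1_R\ne 0$ (so that the nonzero rings $\M_{m^i}(R)$ embed into $D$, giving $D\ne 0$) and to fit the idempotent $e$ inside $\M_m(R)$.
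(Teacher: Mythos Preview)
Your proof is correct and follows essentially the same approach as the paper: verify that each $f_i$ is an injective ring homomorphism, observe that $f_i(A)$ has trace a multiple of $n$ and hence trace zero, invoke the trace-zero-implies-commutator fact for matrix rings (the paper cites~\cite[Corollary 17]{ZM}), and apply Lemma~\ref{DirLimLemma}. You simply spell out in more detail what the paper leaves as ``easy to verify.''
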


\begin{proof}
It is easy to verify that each $f_i$ is an injective ring homomorphism. Also, for any $i$ and $A \in \M_{m^i}(R)$, $f_i(A) \in \M_{m^{i+1}}(R)$ is a matrix having trace $0$ (since this trace is a multiple of $n$). Hence $f_i(A) \in [\M_{m^{i+1}}(R), \M_{m^{i+1}}(R)]$ (e.g., by~\cite[Corollary 17]{ZM}). It now follows from Lemma~\ref{DirLimLemma} that $D = [D,D]$.
\end{proof}

Some of the commutator Leavitt path algebras constructed above are of the form $D$ in Corollary~\ref{DirLimMatrix}. For instance, let $E_1$ be as in Proposition~\ref{perfectLPA}. Then taking $R=K$, $n=p$, and $m=p+1$, and letting $D$ be the corresponding direct limit, it is not hard to show that $L_K(E) \cong D$.

Others among our commutator Leavitt path algebras are more difficult to recognize as direct limits of the sort presented in Lemma~\ref{DirLimLemma}, but this can be accomplished using a more complicated version of the construction in Corollary~\ref{DirLimMatrix}. We leave the details to the interested reader.

\vspace{.1in}

\noindent
Department of Mathematics \newline
University of Colorado \newline
Colorado Springs, CO 80918 \newline
USA \newline

\noindent {\tt zmesyan@uccs.edu}

\end{document}